\begin{document}

\def\joost{\color{blue}}

\newcommand{\thlabel}[1]{\label{th:#1}}
\newcommand{\thref}[1]{Theorem~\ref{th:#1}}
\newcommand{\selabel}[1]{\label{se:#1}}
\newcommand{\seref}[1]{Section~\ref{se:#1}}
\newcommand{\lelabel}[1]{\label{le:#1}}
\newcommand{\leref}[1]{Lemma~\ref{le:#1}}
\newcommand{\prlabel}[1]{\label{pr:#1}}
\newcommand{\prref}[1]{Proposition~\ref{pr:#1}}
\newcommand{\colabel}[1]{\label{co:#1}}
\newcommand{\coref}[1]{Corollary~\ref{co:#1}}
\newcommand{\relabel}[1]{\label{re:#1}}
\newcommand{\reref}[1]{Remark~\ref{re:#1}}
\newcommand{\exlabel}[1]{\label{ex:#1}}
\newcommand{\exref}[1]{Example~\ref{ex:#1}}
\newcommand{\delabel}[1]{\label{de:#1}}
\newcommand{\deref}[1]{Definition~\ref{de:#1}}
\newcommand{\eqlabel}[1]{\label{eq:#1}}
\newcommand{\equref}[1]{(\ref{eq:#1})}

\newcommand{\Hom}{{\sf Hom}}
\newcommand{\Alg}{{\sf Alg}}
\newcommand{\Meas}{{\sf Meas}}
\newcommand{\Comeas}{{\sf Comeas}}
\newcommand{\End}{{\sf End}}
\newcommand{\Ext}{{\sf Ext}}
\newcommand{\Fun}{{\sf Fun}}
\newcommand{\Mor}{{\sf Mor}}
\newcommand{\Aut}{{\sf Aut}}
\newcommand{\Hopf}{{\sf Hopf}}
\newcommand{\sHopf}{{\sf sHopf}}
\newcommand{\opHopf}{{\sf opHopf}}
\newcommand{\opsHopf}{{\sf opsHopf}}
\newcommand{\Coalg}{{\sf Coalg}}
\newcommand{\Ann}{{\sf Ann}}
\newcommand{\Ker}{{\sf Ker}}
\renewcommand{\ker}{{\sf Ker}}
\newcommand{\Coker}{{\sf Coker}}
\newcommand{\im}{{\sf Im}}
\newcommand{\coim}{{\sf Coim}}
\newcommand{\Trace}{{\sf Trace}}
\newcommand{\Char}{{\sf Char}}
\newcommand{\Mod}{{\sf Mod}}
\newcommand{\Vect}{{\sf Vect}}
\newcommand{\Spec}{{\sf Spec}}
\newcommand{\Span}{{\sf Span}}
\newcommand{\sgn}{{\sf sgn}}
\newcommand{\Id}{{\sf Id}}
\newcommand{\Com}{{\sf Com}}
\newcommand{\codim}{{\sf codim}}
\newcommand{\Mat}{{\sf Mat}}
\newcommand{\Coint}{{\rm Coint}}
\newcommand{\Incoint}{{\sf Incoint}}
\newcommand{\can}{{\sf can}}
\newcommand{\Bim}{{\sf Bim}}
\newcommand{\CAT}{{\sf CAT}}
\newcommand{\sign}{{\sf sign}}
\newcommand{\kar}{{\sf kar}}
\newcommand{\rad}{{\sf rad}}
\newcommand{\Rat}{{\sf Rat}}
\newcommand{\Cob}{{\sf Cob}}
\newcommand{\ev}{{\sf ev}}
\newcommand{\sd}{{\sf d}}
\def\colim{{\sf colim}\,}
\def\tildej{\tilde{\jmath}}
\def\barj{\bar{\jmath}}

\def\Ab{\underline{\underline{\sf Ab}}}
\def\lan{\langle}
\def\ran{\rangle}
\def\ot{\otimes}
\def\bul{\bullet}
\def\ubul{\underline{\bullet}}

\def\id{\textrm{{\small 1}\normalsize\!\!1}}
\def\To{{\multimap\!\to}}
\def\bigperp{{\LARGE\textrm{$\perp$}}} 
\newcommand{\QED}{\hspace{\stretch{1}}
\makebox[0mm][r]{$\Box$}\\}

\def\AA{{\mathbb A}}
\def\BB{{\mathbb B}}
\def\CC{{\mathbb C}}
\def\DD{{\mathbb D}}
\def\EE{{\mathbb E}}
\def\FF{{\mathbb F}}
\def\GG{{\mathbb G}}
\def\HH{{\mathbb H}}
\def\II{{\mathbb I}}
\def\JJ{{\mathbb J}}
\def\KK{{\mathbb K}}
\def\LL{{\mathbb L}}
\def\MM{{\mathbb M}}
\def\NN{{\mathbb N}}
\def\OO{{\mathbb O}}
\def\PP{{\mathbb P}}
\def\QQ{{\mathbb Q}}
\def\RR{{\mathbb R}}
\def\TT{{\mathbb T}}
\def\UU{{\mathbb U}}
\def\VV{{\mathbb V}}
\def\WW{{\mathbb W}}
\def\XX{{\mathbb X}}
\def\YY{{\mathbb Y}}
\def\ZZ{{\mathbb Z}}

\def\aa{{\mathfrak A}}
\def\bb{{\mathfrak B}}
\def\cc{{\mathfrak C}}
\def\dd{{\mathfrak D}}
\def\ee{{\mathfrak E}}
\def\ff{{\mathfrak F}}
\def\gg{{\mathfrak G}}
\def\hh{{\mathfrak H}}
\def\ii{{\mathfrak I}}
\def\jj{{\mathfrak J}}
\def\kk{{\mathfrak K}}
\def\ll{{\mathfrak L}}
\def\mm{{\mathfrak M}}
\def\nn{{\mathfrak N}}
\def\oo{{\mathfrak O}}
\def\pp{{\mathfrak P}}
\def\qq{{\mathfrak Q}}
\def\rr{{\mathfrak R}}
\def\tt{{\mathfrak T}}
\def\uu{{\mathfrak U}}
\def\vv{{\mathfrak V}}
\def\ww{{\mathfrak W}}
\def\xx{{\mathfrak X}}
\def\yy{{\mathfrak Y}}
\def\zz{{\mathfrak Z}}

\def\aaa{{\mathfrak a}}
\def\bbb{{\mathfrak b}}
\def\ccc{{\mathfrak c}}
\def\ddd{{\mathfrak d}}
\def\eee{{\mathfrak e}}
\def\fff{{\mathfrak f}}
\def\ggg{{\mathfrak g}}
\def\hhh{{\mathfrak h}}
\def\iii{{\mathfrak i}}
\def\jjj{{\mathfrak j}}
\def\kkk{{\mathfrak k}}
\def\lll{{\mathfrak l}}
\def\mmm{{\mathfrak m}}
\def\nnn{{\mathfrak n}}
\def\ooo{{\mathfrak o}}
\def\ppp{{\mathfrak p}}
\def\qqq{{\mathfrak q}}
\def\rrr{{\mathfrak r}}
\def\sss{{\mathfrak s}}
\def\ttt{{\mathfrak t}}
\def\uuu{{\mathfrak u}}
\def\vvv{{\mathfrak v}}
\def\www{{\mathfrak w}}
\def\xxx{{\mathfrak x}}
\def\yyy{{\mathfrak y}}
\def\zzz{{\mathfrak z}}

\newcommand{\aA}{\mathscr{A}}
\newcommand{\bB}{\mathscr{B}}
\newcommand{\cC}{\mathscr{C}}
\newcommand{\dD}{\mathscr{D}}
\newcommand{\eE}{\mathscr{E}}
\newcommand{\fF}{\mathscr{F}}
\newcommand{\gG}{\mathscr{G}}
\newcommand{\hH}{\mathscr{H}}
\newcommand{\iI}{\mathscr{I}}
\newcommand{\jJ}{\mathscr{J}}
\newcommand{\kK}{\mathscr{K}}
\newcommand{\lL}{\mathscr{L}}
\newcommand{\mM}{\mathscr{M}}
\newcommand{\nN}{\mathscr{N}}
\newcommand{\oO}{\mathscr{O}}
\newcommand{\pP}{\mathscr{P}}
\newcommand{\qQ}{\mathscr{Q}}
\newcommand{\rR}{\mathscr{R}}
\newcommand{\sS}{\mathscr{S}}
\newcommand{\tT}{\mathscr{T}}
\newcommand{\uU}{\mathscr{U}}
\newcommand{\vV}{\mathscr{V}}
\newcommand{\wW}{\mathscr{W}}
\newcommand{\xX}{\mathscr{X}}
\newcommand{\yY}{\mathscr{Y}}
\newcommand{\zZ}{\mathscr{Z}}

\newcommand{\Aa}{\mathcal{A}}
\newcommand{\Bb}{\mathcal{B}}
\newcommand{\Cc}{\mathcal{C}}
\newcommand{\Dd}{\mathcal{D}}
\newcommand{\Ee}{\mathcal{E}}
\newcommand{\Ff}{\mathcal{F}}
\newcommand{\Gg}{\mathcal{G}}
\newcommand{\Hh}{\mathcal{H}}
\newcommand{\Ii}{\mathcal{I}}
\newcommand{\Jj}{\mathcal{J}}
\newcommand{\Kk}{\mathcal{K}}
\newcommand{\Ll}{\mathcal{L}}
\newcommand{\Mm}{\mathcal{M}}
\newcommand{\Nn}{\mathcal{N}}
\newcommand{\Oo}{\mathcal{O}}
\newcommand{\Pp}{\mathcal{P}}
\newcommand{\Qq}{\mathcal{Q}}
\newcommand{\Rr}{\mathcal{R}}
\newcommand{\Ss}{\mathcal{S}}
\newcommand{\Tt}{\mathcal{T}}
\newcommand{\Uu}{\mathcal{U}}
\newcommand{\Vv}{\mathcal{V}}
\newcommand{\Ww}{\mathcal{W}}
\newcommand{\Xx}{\mathcal{X}}
\newcommand{\Yy}{\mathcal{Y}}
\newcommand{\Zz}{\mathcal{Z}}

\def\units{{\mathbb G}_m}
\def\rightact{\hbox{$\leftharpoonup$}}
\def\leftact{\hbox{$\rightharpoonup$}}

\def\*C{{}^*\hspace*{-1pt}{\Cc}}
\def\*c{{}^*\hspace*{-1pt}{\cc}}

\def\text#1{{\rm {\rm #1}}}

\def\smashco{\mathrel>\joinrel\mathrel\triangleleft}
\def\cosmash{\mathrel\triangleright\joinrel\mathrel<}

\def\ol{\overline}
\def\ul{\underline}
\def\dul#1{\underline{\underline{#1}}}
\def\Nat{\dul{\rm Nat}}
\def\Set{\dul{\rm Set}}
\def\MCl{\dul{\rm MCl}}

\renewcommand{\subjclassname}{\textup{2000} Mathematics Subject
     Classification}

\newtheorem{proposition}{Proposition}[section] 
\newtheorem{lemma}[proposition]{Lemma}
\newtheorem{corollary}[proposition]{Corollary}
\newtheorem{theorem}[proposition]{Theorem}
\newtheorem{conjecture}[proposition]{Conjecture}

\theoremstyle{definition}
\newtheorem{Definition}[proposition]{Definition}
\newtheorem{definition}[proposition]{Definition}
\newtheorem{example}[proposition]{Example}
\newtheorem{examples}[proposition]{Examples}

\theoremstyle{remark}
\newtheorem{remarks}[proposition]{Remarks}
\newtheorem{remark}[proposition]{Remark}

\title{The Hopf category of Frobenius algebras}

\author[P. Gro{\ss}kopf]{Paul Gro{\ss}kopf}
\address{Paul Gro{\ss}kopf, D\'epartement de Math\'ematiques, Universit\'e Libre de Bruxelles, Belgium}
\email{paul.grosskopf@gmx.at}

\author[J. Vercruysse]{Joost Vercruysse}
\address{Joost Vercruysse, D\'epartement de Math\'ematiques, Universit\'e Libre de Bruxelles, Belgium}
\email{joost.vercruysse@ulb.be}

\begin{abstract}
We show that the universal measuring coalgebras between Frobenius algebras turn the category of Frobenius algebras into a Hopf category (in the sense of \cite{BCV}), and the universal comeasuring algebras between Frobenius algebras turn the category Frobenius algebras into a Hopf opcategory. We also discuss duality and compatibility results between these structures.

Our theory vastly generalizes the well-known fact that any homomorphism beween Frobenius algebras is an isomorphism, but also allows to go beyond classical (iso)morphisms between Frobenius algebras, especially in finite characteristic, as we show by some explicit examples.

The paper is concluded with some open questions and considerations about Topological Quantum Field Theories.
\end{abstract}

\maketitle

\tableofcontents

\section{Introduction}

There exist many interesting connections between Hopf algebras and Frobenius algebras, or more generally between Hopf structures and Frobenius structures. The most well-known result in this spirit is probably the so-called Larson-Sweedler theorem \cite{LarSwe}, which tells that any finite dimensional Hopf algebra has a non-zero integral and therefore is Frobenius. This result has been refined by Pareigis \cite{Par:HopfFrob}, who showed that a bialgebra is finite dimensional and Hopf if and only if it is Frobenius and the Frobenius structure is in a suitable way compatible with the bialgebra structure (roughly meaning that the Frobenius structure arises from an integral). More generally, Kreimer and Takeuchi showed in \cite{KreTak} that Hopf-Galois objects, or more generally finite Hopf-Galois extensions with free invariants, are Frobenius.
To name just one of the many other links between Hopf and Frobenius conditions, let us mention that it was recently observed by Saracco that a bialgebra $B$ is Hopf if and only if the free functor $-\ot B:{_B\Mod}\to {_B\Mod_B^B}$ is Frobenius (i.e. has an isomorphic left and right adjoint), see \cite{Sar1} and \cite{Sar2}. 

The aim of this paper is to show another relationship between Frobenius algebras and Hopf structures, which is of a very different nature. Our investigations start from the well-known observation that any morphism between two Frobenius algebras (preserving both the algebra and coalgebra structures) is necessarily an isomorphism. This turns the category of Frobenius algebras into a groupoid. And as is also well-known, groupoids give rise to weak (multiplier) Hopf algebras via linearisation. This is the link between "Frobenius" and "Hopf" that we aim to explore, although in a more general setting.

To this end, we will make use of Sweedler's theory of measuring coalgebras \cite{Sweedler}. This theory allows to treat homomorphisms between algebras $A$ and $B$ as well as derivations and various other types of generalized morphisms at the same time. More precisely, one considers a coalgebra $P$ together with a linear map $\psi:P\ot A\to B$ such that the associated map $A\to \Hom(P,B)$ is an algebra morphism with respect to the convolution algebra structure on the codomain.
Moreover, Sweedler showed the existence of a {\em universal} measuring coalgebra between any pair of algebras. This theory has been generalized in several ways, for example to the setting of closed monoidal categories in \cite{Chris} and to more general types of algebraic structures, termed $\Omega$-algebras in \cite{AGV}. 

It was already implicitly observed in Sweedler that the existence of universal measuring coalgebras allows the enrichement of the category of algebras over the category of coalgebras. In the terminology of \cite{BCV} this means that the category of algebras can be turned into a ($k$-linear) semi-Hopf category. Following the work of \cite{AGV}, the same holds for any category of $\Omega$-algebras. A natural question arises whether it is also possible to make this category into a Hopf-category, that is, to show the existence of an antipode for this semi-Hopf category. One possibility is to extend the considered semi-Hopf category by a free or cofree construction. The fact that this is indeed possible has been shown in \cite{GV}. However, more interestingly, one could look for conditions such that the semi-Hopf category arising from the universal measurings {\em is} Hopf. A first result of this flavour has been discussed in \cite{Chris}, where it was shown that the universal measuring coalgebra between a cocommutative Hopf algebra and a commutative bialgebra is itself a Hopf algebra in a natural way. Remark however, that this Hopf algebra structure is ``local'' and hence this construction does not give rise to a Hopf category. 

The main aim of this paper is to show (see Theorem~\ref{univact}) that the semi-Hopf category of universal measurings between Frobenius algebras is automatically Hopf, and that the antipode is moreover invertible. By restricting to the grouplike elements in the measuring coalgebras, one recovers from this the above mentioned result, that any morphism between Frobenius algebras is invertible. 

Dually to measuring coalgebras, one can consider comeasuring algebras. However, universal comeasurings only exist under suitable finiteness conditions (see \cite{AGV}). Since Frobenius algebras are necessarily finite dimensional, universal comeasurings exist as well between any pair of Frobenius algebras. This, in turn, allows to endow a category of Frobenius algebras with a semi-Hopf {\em op}category structure, which again turns out to be Hopf (see Theorem~\ref{univcoact}). As one can expect, both theories are dual to each other, in the sense that the measuring coalgebras arise as the Sweeder dual of comeasuring algebras. Furthermore, also the notion of Frobenius algebra itself has a nice self-duality. This can be combined with the measurings in several ways, which allows to recover the antipode of the measuring Hopf category, as we show in Theorem~\ref{factor}.

To illustrate our theory, we explicitly compute the universal measuring coalgebra for some small examples of Frobenius algebras arising from group algebras and matrix algebras. These examples show some particular behaviour, that motivates us to formulate some open questions for future investigations. In particular, we discuss the remarkable behaviour of universal measuring coalgebras, that existence of an (iso)morphism after base extension is already reflected by the existence of non-zero measurings before base extension (see Proposition~ Proposition~\ref{pr:descent}).
We finish the paper with some consideration about Topological Quantum Field Theory that were in fact the initial motivation for our investigations.

\subsection*{Notations and conventions}

Throughout this paper we work with algebras over a commutative field $k$, i.e.\ within the symmetric monoidal category of $k$-vector spaces that we denote as $\Vect_k$. However, the results of this paper hold in a general symmetric monoidal base category $\Vv$, under the condition that universal (co)measuring objects exist in $\Vv$ (examples of such categories are, apart from vector spaces over field, representations of a group or graded vector spaces over a commutative group, for more examples we refer for example to \cite{AGV2}).

We denote the identity morphism on an object $A$ in $\Vv$ by $A$, $\Id_A$ or just by $\Id$.

\subsection*{Acknowledgements}
We thank William Hautekiet for assistance to compute the dimensions of universal measuring coalgebras by means of computer algebra. The first author was supported by a Fria fellowship of FNRS FNRS (Fonds National de la Recherche Scientifique), grant number FC41285, while working on this project. The second author would like to thank the FWB (fédération Wallonie-Bruxelles) for support through the ARC project ``From algebra to combinatorics, and back''.

\section{Preliminaries and first results}

\subsection{Morphisms of Frobenius algebras}

We recall some definitions and known properties of Frobenius algebras, as they will appear to be useful in the new results obtained later in this paper. We refer to \cite{CMZ} and \cite{Kock} for more details.

\begin{Definition}
A {\em Frobenius algebra} is 5-tuple $(A,\mu,\eta, \Delta, \nu)$, with linear maps
\begin{eqnarray*}
\mu: A\ot A \to A, \qquad \eta: k\to A,\\
\Delta: A\to A\ot A, \qquad \nu: A\to k,
\end{eqnarray*}
such that:
\begin{itemize}
\item $(A,\mu,\eta)$ is an (associative, unital) algebra,
\item $(A,\Delta, \nu)$ is a (coassociative, counital) coalgebra,
\item the {\em Frobenius conditions} are satisfied, meaning that the following diagram commutes
\end{itemize}
\[
\xymatrix{ & A \ot A \ar[d]^\mu \ar[dl]_-{\Delta\ot A} \ar[dr]^-{A\ot \Delta}  \\
A\ot A \ot A \ar[rd]_{A\ot \mu}& A \ar[d]^\Delta & A\ot A \ot A \ar[dl]^{\mu\ot A}  \\
            & A \ot A. }
\] 
A {\em morphism of Frobenius algebras} is a linear map that is an algebra morphism as well as a coalgebra morphism.
\end{Definition}

With Sweedler type notation $\Delta(a)=a^{(1)}\ot a^{(2)}$, the Frobenius conditions tell us that for all $a, b \in A$ :
$$(ab)^{(1)}\ot (ab)^{(2)}=a^{(1)}\ot a^{(2)}b=ab^{(1)}\ot b^{(2)}.$$
Consequently, the comultiplication is completely determined by its value in the unit, which is given by the so-called {\em Casimir element}
$$\Delta(1)=1^{(1)}\ot 1^{(2)}=:e^1\ot e^2=e \in A\ot A$$
which satisfies - because of the bilinearity of the comultiplication - the following Casimir property 
\begin{equation} e^{1}\ot e^{2}a=\Delta(1a)=\Delta(a)=\Delta(a1)=ae^{1}\ot e^{2}. \eqlabel{casimir} \end{equation}
In calculations one often has to deal with more than one copy of the Casimir element. Hence we will use the notation 
$$e^{1}\ot e^{2}=E^{1}\ot E^{2}=\varepsilon^1\ot \varepsilon^2.$$
By the Casimir property, we then have the following identities
$$E^1e^1\ot e^2\ot E^2=e^1\ot e^2E^1\ot E^2=e^1\ot E^1\ot E^2e^2.$$
The counitality of the comultiplication can be expressed in terms of the Casimir element in the following way
$$\nu(e^1)e^2=1=e^1\nu(e^2).$$

It is well known that the coassociativity of the the comultiplication of a Frobenius algebra can be deduced from its associativity (and vice versa). Moreover a Frobenius algebra $A$ is called {\em symmetric} if $\nu(ab)=\nu(ba)$ for all $a,b\in A$, or equivalently the Casimir element satisfies 
$$e^1\ot e^2=e^2\ot e^1.$$
Classical examples of Frobenius algebras are matrix algebras, finite group algebras and finite dimensional Hopf algebras.

It turns out that the restrictions on Frobenius algebras are so strong, that every morphism of Frobenius algebras is already an isomorphism. We give a detailed proof of this very well-known result, since we will generalize the argument later. 

\begin{proposition} \prlabel{frobiso}
Every morphism of Frobenius algebras is an isomorphism.
\end{proposition}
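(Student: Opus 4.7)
The plan is to show that a Frobenius morphism $f : A \to B$ is bijective, by proving injectivity and surjectivity separately. Both arguments will rely on two ingredients already visible in the preliminaries: non-degeneracy of the Frobenius pairing $(a,x) \mapsto \nu_A(ax)$ on $A$, and the fact that a morphism of Frobenius algebras is forced to transport Casimir elements.

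For injectivity, I first observe that the Casimir identity \equref{casimir} together with the counit formula $\nu_A(e^1)e^2 = 1$ makes the map $A \to A^*$, $a \mapsto \nu_A(a\cdot -)$, left-invertible (and hence injective) with left inverse $\alpha \mapsto \alpha(e^1)e^2$; indeed, one computes $\nu_A(ae^1)e^2 = \nu_A(e^1) e^2 a = a$ using the Casimir property to move $a$ across the tensor factor. Now if $f(a) = 0$, then for every $x \in A$ one has $\nu_A(ax) = \nu_B(f(a)f(x)) = 0$, since $f$ is an algebra morphism and preserves the counit. Non-degeneracy then forces $a = 0$.

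For surjectivity, the key remark is that $f$ transports Casimir elements. Since $f(1_A) = 1_B$ and $f$ intertwines the comultiplications, applying $f \otimes f$ to $\Delta_A(1_A) = e^1 \otimes e^2$ yields $f(e^1) \otimes f(e^2) = \Delta_B(1_B)$, i.e.\ the image of the Casimir of $A$ is the Casimir of $B$. For an arbitrary $b \in B$, the Casimir property in $B$ combined with counitality on the first leg gives $b = \nu_B(b\cdot f(e^1))\,f(e^2) = f\!\left(\nu_B(b \cdot f(e^1))\, e^2\right)$, exhibiting $b$ as an element of the image of $f$.

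I do not foresee any genuine technical obstacle: both halves reduce to short manipulations with the Casimir element. The one conceptual point to emphasise, which in fact motivates the rest of the paper, is that the algebra-and-coalgebra-morphism hypothesis is already rigid enough to send the Casimir of $A$ literally to that of $B$. It is precisely this rigidity that forces bijectivity, and it is precisely this rigidity that later sections will relax by replacing strict morphisms with (co)measurings.
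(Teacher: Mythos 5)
Your proof is correct, but it takes a genuinely different route from the paper's. The paper writes down an explicit two-sided inverse $g(b)=e^1\nu_B(h(e^2)b)$ and verifies $hg=\Id_B$ and $gh=\Id_A$ by direct Casimir manipulations, whereas you split the statement into injectivity and surjectivity: injectivity via non-degeneracy of the pairing $(a,x)\mapsto\nu_A(ax)$ (with the explicit left inverse $\alpha\mapsto\alpha(e^1)e^2$, which you correctly derive from \equref{casimir} and counitality), and surjectivity via preservation of the Casimir element together with the identity $b=\nu_B(bf^1)f^2$. Both halves of your argument are sound, and your decomposition has the small conceptual advantage of isolating which parts of the Frobenius structure each half uses: injectivity needs only that $f$ is an algebra map with $\nu_B\circ f=\nu_A$, while surjectivity needs only unit and comultiplication preservation. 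What the paper's approach buys instead is the explicit inverse formula itself: as the remark following \prref{frobiso} and the proof of Theorem~\ref{univact} show, the map $g(b)=e^1\nu_B(h(e^2)b)$ is precisely the formula that gets promoted to the antipode $\bar\psi_{AB}(p\ot b)=e^1\nu_B(p(e^2)b)$ of the measuring Hopf category, where an injective/surjective dichotomy has no analogue; so the constructive proof is the one that generalizes, even though your argument proves the proposition equally well.
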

\begin{proof}
Let $A$ and $B$ be two Frobenius algebras. If $h:A\to B$ is morphism of Frobenius algebras, $h$ also preserves the Casimir elements, since
$$h(e^1)\ot h(e^2)=(h\ot h) \Delta(1_A)=\Delta h(1_A)=\Delta(1_B)=:f^{1}\ot f^{2}.$$
We can define an inverse $g:B\to A$ by
$$g(b)=e^1\nu_B(h(e^2)b).$$
Let us check that $g$ and $h$ are mutual inverses:
\begin{eqnarray*}
(hg)(b)&=&h(e^1)\nu_B(h(e^2)b)=f^{1}\nu_B(f^{2}b)=bf^1\nu_B(f^{2})=b,\\
(gh)(a)&=&e^1\nu_B(h(e^2)h(a))=e^1\nu_B(h(e^2a))=e^1\nu_A(e^2a)=ae^1\nu_A(e^2)=a.
 \end{eqnarray*}
Therefore $h$ is an isomorphism of Frobenius algebras.
\end{proof}

\begin{remark}
As usual, we know that the linear inverse of an invertible morphism is a morphism itself. In case of a morphism of Frobenius algebras, it can be verified explicitly that the proposed inverse as in the above proof is a morphism of Frobenius algebras, even without making explicit use of its invertibility. Let us provide the explicit computations here, as they are the guideline for more general computations further in this paper.

The map $g$ preserves the comultiplication:
\begin{eqnarray*}
g(b^{(1)})\ot g(b^{(2)})&=&g(f^1)\ot g(f^2b)=e^1\nu_B(h(e^2)f^1)\ot E^1\nu_B(h(E^2)f^2b)\\
&=&e^1\nu_B(h(e^2)h(\varepsilon^1))\ot E^1\nu_B(h(E^2)h(\varepsilon^2)b)\\
&=&e^1\nu_B(h(e^2\varepsilon^1))\ot E^1\nu_B(h(E^2\varepsilon^2)b)\\
&=&\varepsilon^1e^1\nu_B(h(e^2))\ot E^1\nu_B(h(E^2\varepsilon^2)b)\\
&=&\varepsilon^1e^1\nu_A(e^2)\ot E^1\nu_B(h(E^2\varepsilon^2)b)\\
&=&\varepsilon^1\ot E^1\nu_B(h(E^2\varepsilon^2)b)\\
&=&\varepsilon^1\ot \varepsilon^2E^1\nu_B(h(E^2)b)\\
&=&\Delta(g(b)).
\end{eqnarray*}

The map $g$ preserves the unit:
\begin{eqnarray*}
g(1_B)=e^1_A \nu_B(h(e^2)1_B)=e^1_A \nu_A(e^2)=1_A.
\end{eqnarray*}

The map $g$ preserves the multiplication:
\begin{eqnarray*}
g(b)g(b')&=&e^1\nu_B(h(e^2)b)E^1\nu_B(h(E^2)b')\\
&=&e^1E^1\nu_B(h(e^2)b)\nu_B(h(E^2)b')\\
&=&E^1\nu_B(h(e^2)b)\nu_B(h(E^2e^1)b')\\
&=&E^1\nu_B(h(e^2)b)\nu_B(h(E^2)h(e^1)b')\\
&=&E^1\nu_B(f^2)b)\nu_B(h(E^2)f^1b')\\
&=&E^1\nu_B(f^2))\nu_B(h(E^2)bf^1b')\\
&=&E^1\nu_B(h(E^2)bf^1\nu_B(f^2)b')\\
&=&E^1\nu_B(h(E^2)bb')\\
&=&g(bb').
\end{eqnarray*}

The map $g$ preserves the counit:
\begin{eqnarray*}
\nu_A(g(b))=\nu_A(e^1)\nu_B(h(e^2)b)=\nu_B(h(\nu_A(e^1)e^2)b)=\nu_B(h(1_A)b)=\nu_B(1_B b)=\nu_B(b).
\end{eqnarray*}
\end{remark}

\subsection{Hopf categories and Hopf opcategories}

Hopf $\Vv$-categories, where $\Vv$ is a symmetric (or braided) monoidal category were introduced in \cite{BCV}. We recall the definitions and basic properties for the reader's convenience.

\begin{Definition}\label{defsHcat}
A {\em semi-Hopf $\Vv$-category} where $(\Vv,\ot, I, \sigma)$ is a braided category is a category $\ul A$ enriched over the monoidal category of coalgebras (or comonoids) in $\Vv$. Explicitely, a semi-Hopf $\Vv$-category $\ul A=(A^0,A_{xy})$ consists of a collection\footnote{This could be a proper class although to avoid set-theoretical issues we will mostly suppose that $A^0$ is a set.} of objects $A^0$ and for all objects $x,y\in A^0$ we have an object $A_{xy} \in \Vv$ together with morphisms in $\Vv$ (for all $x,y,z\in A^0$):
\begin{eqnarray*}
m_{xyz}:  A_{xy}\ot A_{yz}\to A_{xz}, \qquad j_x: I \to A_{xx},\\
\delta_{xy}: A_{xy}\to A_{xy} \ot A_{xy}, \qquad \epsilon_{xy}: A_{xy}\to I
\end{eqnarray*}
satisfying the commutativity of the following diagrams.
\[\xymatrix{ 
A_{xy}\ot A_{yz}\ot A_{zu} \ar[rr]^-{m_{xyz}\ot \Id} \ar[d]_-{\Id\ot m_{yzu}} && A_{xz}\ot A_{zu} \ar[d]^-{m_{xzu}} \\
A_{xy}\ot A_{yu} \ar[rr]^-{m_{xyu}} && A_{xu}
}\quad
\xymatrix{
A_{xy}\ot A_{yy} \ar[dr]^-{m_{xyy}} && A_{xx}\ot A_{xy} \ar[dl]_-{m_{xxy}}\\
A_{xy}\ot I \ar[u]^-{\Id \ot j_y} \ar[r]_\cong & A_{xy} & I\ot A_{xy} \ar[u]_-{j_x\ot \Id} \ar[l]^\cong
}
\]
\[
\xymatrix{
A_{xy} \ar[rr]^-{\delta_{xy}} \ar[d]_-{\delta_{xy}} && A_{xy}\ot A_{xy} \ar[d]^-{\delta_{xy}\ot \Id} \\
A_{xy}\ot A_{xy} \ar[rr]_-{\Id \ot \delta_{xy}} && A_{xy}\ot A_{xy} \ot A_{xy} 
}\qquad
\xymatrix{
A_{xy} \ar[drr]^-{\delta_{xy}} \ar[rr]^\cong \ar[d]_\cong && A_{xy}\ot I \\
I\ot A_{xy} && A_{xy}\ot A_{xy} \ar[u]_-{\Id\ot \epsilon_{xy}} \ar[ll]^-{\epsilon_{xy}\ot \Id}
}
\]
\[
\xymatrix{A_{xy}\ot A_{yz} \ar[rr]^-{\delta_{xy}\ot \delta_{xy}} \ar[dd]_{m_{xyz}} & & A_{xy}\ot A_{xy}\ot A_{yz}\ot A_{yz} \ar[d]^{\Id \ot \sigma \ot \Id} \\
& & A_{xy}\ot A_{yz} \ot A_{xy}\ot A_{yz} \ar[d]^{m_ {xyz} \ot m_{xyz}} \\
A_{xz} \ar[rr]_{\delta_{xz}} & & A_{xz}\ot A_{xz} } 
\]
\[
\xymatrix{I \ar[r]^-{\simeq} \ar[d]_{j_x} & I\ot I \ar[d]^{j_x\ot j_x} &A_{xy}\ot A_{yz} \ar[r]^-{\epsilon_{xy}\ot \epsilon_{yz}} \ar[d]_{m_{xyz}} & I\ot I \ar[d]^{\simeq} &I \ar[d]_{j_x} \ar[r]^\Id & I \ar[d]^\Id \\
A_{xx} \ar[r]_-{\delta_{xx}} & A_{xx}\ot A_{xx} &A_{xz} \ar[r]_{\epsilon_{xz}} & I &A_{xx} \ar[r]_{\epsilon_{xx}} & I } 
\]

A morphism of semi-Hopf $\Vv$-categories $\ul f:\ul A\to \ul B$ consists of a map $f^0:A^0\to B^0$ and for each $x,y\in A^0$ a $\Vv$-morphism $f_{xy}: A_{xy}\to B_{fxfy}$\footnote{In order not to overload notation, we denote $f^0(x)$ simply by $fx$ for any $x\in A^0$.} that respect the structure morphisms in the following sense.
\begin{eqnarray*}
\xymatrix{
A_{xy} \ar[rr]^-{f_{xy}} \ar[d]_-{\delta_{xy}} && B_{fxfy} \ar[d]^-{\delta_{fxfy}} \\
A_{xy}\ot A_{xy} \ar[rr]^-{f_{xy}\ot f_{xy}}  && B_{fxfy}\ot B_{fxfy}} 
&& \xymatrix{A_{xy} \ar[dr]_-{\epsilon_{xy}} \ar[rr]^-{f_{xy}} && B_{fxfy} \ar[dl]^-{\epsilon_{fxfy}} \\ & I } \\
\xymatrix{A_{xy}\ot A_{yz} \ar[rr]^-{f_{xy}\ot f_{yz}} \ar[d]_-{m_{xyz}} && B_{fxfy}\ot B_{fyfz} \ar[d]^-{m_{fxfyfz}} \\
A_{xz} \ar[rr]^-{f_{xz}} && B_{fxfz}
}
&&
\xymatrix{& I \ar[dl]_-{j_x} \ar[dr]^-{j_{fx}} \\
A_{xx} \ar[rr]^-{f_{xx}} && B_{fxfx}}
\end{eqnarray*}
In other words, $\ul f$ is a $\Coalg(\Vv)$-functor where $\Coalg(\Vv)$ denotes the category of coalgebras in $\Vv$.
We denote the category of semi-Hopf $\Vv$-categories and their morphisms by $\Vv$-$\sHopf$.
\end{Definition}

From the definition it might be clear that $\ul A=(\{*\},A_{**})$ is a semi-Hopf category with just one object, if and only if $A_{**}$ is a bialgebra in $\Vv$. Conversely, for any semi-Hopf category, $\ul A$ and any object $x\in A^0$, we have that $A_{xx}$ is a bialgebra in $\Vv$ and $A_{xy}$ is an $(A_{xx},A_{yy})$-bimodule coalgebra. The transition from bialgebras to Hopf algebras leads in the ``many-object case'' to the introduction of a Hopf category as in the next definition.

\begin{Definition}\label{defHcat}
A semi-Hopf $\Vv$-category $\ul H$ is called {\em Hopf $\Vv$-category}, if it is equipped with a family of $\Vv$-morphisms $s_{xy}: H_{xy} \to H_{yx}$, called the {\em antipode}, satisfying: 
\begin{eqnarray*}
m_{xyx}(A_{xy}\ot s_{xy})\delta_{xy}&=&j_x \epsilon_{xy},\\
m_{yxy}(s_{xy}\ot A_{xy})\delta_{xy}&=&j_y \epsilon_{xy}. 
\end{eqnarray*}
The category of Hopf $\Vv$-categories and morphisms of semi-Hopf categories between them is denoted by $\Vv$-$\Hopf$.
\end{Definition}

In what follows, when $\Vv=\Vect_k$, we will simply use the term ``Hopf category'' for a Hopf $\Vect_k$-category. 
The reader should be warned that the name ``Hopf category'' is also used for a different notions, as considered in \cite{CraFre} or \cite{Pfe}.

One can show (see \cite{BCV}) that a morphism of semi-Hopf categories between Hopf categories preserves the antipode, hence that the antipode for a semi-Hopf category is unique whenever it exits. Furthermore, the antipode of a Hopf category $\ul A$ satisfies the following properties:
\begin{eqnarray} \label{anti}
&s_{xy} m_{xyz} = m_{zyx} \sigma (s_{xy}\ot s_{yx}) \qquad&\delta_{yx}s_{xy}=\sigma(s_{xy}\ot s_{xy})\delta_{xy}\\
&s_{xx}j_x=j_x \qquad &\epsilon_{yx}s_{xy}= \epsilon_{xy}.
\end{eqnarray}
In other words, $s$ can be viewed as an identity-on-objects morphism of semi-Hopf categories from $\ul A$ to $\ul A^{op,cop}$, where $\ul A^{cop}$ is the semi-Hopf category with co-opposite comultiplications at $A_{xy}$ for every $x,y\in A^0$ and $\ul A^{op}$ is the semi-Hopf category with Hom-objects $A_{xy}^{op}=A_{yx}$ for all $x,y\in A^0$ and composition $\xymatrix{A_{yx}\ot A_{zy} \ar[r]^{\sigma^{-1}} & A_{zy}\ot A_{yx} \ar[r]^-{m_{zyx}}& A_{zx} }$.

Where (semi-) Hopf categories have a "local" coalgebra structure and a "global" algebra structure, one can also consider dual objects with a "local" algebra structure and "global" coalgebra structure. We term such objects "(semi-)Hopf opcategories", as is also done in \cite{BFVV} and \cite{GV}.

\begin{Definition}\label{def:sHopfopcat} \label{defsHopcat}
A {\em semi-Hopf $\Vv$-opcategory} $\ul A=(A^0,A_{xy})$ consists of a collection of objects $A^0$ and for all objects $x,y\in A^0$ we have an object $A_{xy} \in \Vv$ together with morphisms in $\Vv$ (for all $x,y,z\in A^0$):
\begin{eqnarray*}
\mu_{xy}: A_{xy}\ot A_{xy}  \to A_{xy}, \qquad \eta_{xy}: I \to A_{xy},\\
d_{xyz}: A_{xz}\to A_{xy} \ot A_{yz}, \qquad e_{x}: A_{xx}\to I
\end{eqnarray*}
satisfying the commutativity of the following diagrams. 
\[\xymatrix{ 
A_{xu} \ar[rr]^-{d_{xyu}} \ar[d]_-{d_{xzu}} && A_{xz}\ot A_{zu} \ar[d]^-{d_{xyz}\ot \Id} \\
A_{xy}\ot A_{yu} \ar[rr]^-{\Id \ot d_{yzu}} && A_{xy} \ot A_{yz}\ot A_{zu}
}\quad
\xymatrix{
A_{xy}\ot A_{yy} \ar[d]_-{\Id \ot e_y} && A_{xx}\ot A_{xy} \ar[d]^-{e_x\ot \Id} \\
A_{xy}\ot I  \ar[r]_\cong & A_{xy} \ar[ul]_-{d_{xyy}} \ar[ur]^-{d_{xxy}} & I\ot A_{xy}  \ar[l]^\cong
}
\]
\[
\xymatrix{
A_{xy}\ot A_{xy} \ot A_{xy}  \ar[rr]^-{\Id \ot \mu_{xy}} \ar[d]_-{\mu_{xy}\ot \Id } && A_{xy}\ot A_{xy} \ar[d]^-{\mu_{xy}} \\
A_{xy}\ot A_{xy} \ar[rr]_-{\mu_{xy}} && A_{xy}
}\qquad
\xymatrix{
A_{xy}\ot A_{xy} \ar[drr]^-{\mu_{xy}}  && A_{xy}\ot I \ar[ll]_-{\Id \ot \eta_{xy}} \\
I\ot A_{xy} \ar[u]^-{\eta_{xy}\ot \Id}&& A_{xy} \ar[u]_-{\cong} \ar[ll]^-{\cong}
}
\]
\[
\xymatrix{A_{xz}\ot A_{xz} \ar[rr]^-{d_{xyz}\ot d_{xyz}} \ar[dd]_{\mu_{xz}} & & A_{xy}\ot A_{yz}\ot A_{xy}\ot A_{yz} \ar[d]^{\Id \ot \sigma \ot \Id} \\
& & A_{xy}\ot A_{xy} \ot A_{yz}\ot A_{yz} \ar[d]^{\mu_ {xy} \ot \mu_{yz}} \\
A_{xz} \ar[rr]_{d_{xyz}} & & A_{xy}\ot A_{yz} } 
\]
\[
\xymatrix{I \ar[r]^-{\simeq} \ar[d]_{\eta_{xz}} & I\ot I \ar[d]^{\eta_{xy}\ot \eta_{yz}} &A_{xx}\ot A_{xx} \ar[r]^-{e_x \ot e_x} \ar[d]_{\mu_{xx}} & I\ot I \ar[d]^{\simeq} &I \ar[d]_{\eta_{xx}} \ar[r]^\Id & I \ar[d]^\Id\\
A_{xz} \ar[r]_-{\delta_{xyz}} & A_{xy}\ot A_{yz} &A_{xx} \ar[r]_{e_x} & I &A_{xx} \ar[r]_{e_{x}} & I  }
\]
A morphism of semi-Hopf $\Vv$-opcategories $\ul f:\ul A\to \ul B$ consists of a map $f^0:A^0\to B^0$ and for each $x,y\in A^0$ a $\Vv$-morphism $f_{xy}: A_{xy}\to B_{fxfy}$ that respect the structure morphisms in the following sense.
\begin{eqnarray*}
\xymatrix{
A_{xz} \ar[rr]^-{f_{xz}} \ar[d]_-{d_{xyz}} && B_{fxfz} \ar[d]^-{d_{fxfz}} \\
A_{xy}\ot A_{yz} \ar[rr]^-{f_{xy}\ot f_{yz}}  && B_{fxfy}\ot B_{fyfz}} 
&& \xymatrix{A_{xx} \ar[dr]_-{e_{x}} \ar[rr]^-{f_{xx}} && B_{fxfx} \ar[dl]^-{e_{fx}} \\ & I } \\
\xymatrix{A_{xy}\ot A_{xy} \ar[rr]^-{f_{xy}\ot f_{xy}} \ar[d]_-{\mu_{xy}} && B_{fxfy}\ot B_{fxfy} \ar[d]^-{\mu_{fxfy}} \\
A_{xy} \ar[rr]^-{f_{xy}} && B_{fxfy}
}
&&
\xymatrix{& I \ar[dl]_-{\eta_{xy}} \ar[dr]^-{\eta_{fxfy}} \\
A_{xy} \ar[rr]^-{f_{xy}} && B_{fxfy}}
\end{eqnarray*}
We denote the category of semi-Hopf $\Vv$-opcategories and their morphisms by $\Vv$-$\opsHopf$.
\end{Definition}

Again, we have that $\ul A=(\{*\},A_{**})$ is a semi-Hopf opcategory with just one object if and only if $A_{**}$ is a bialgebra in $\Vv$. Conversely, for any semi-Hopf opcategory, $\ul A$ and any object $x\in A^0$, we have that $A_{xx}$ is a bialgebra in $\Vv$ and $A_{xy}$ is an $(A_{xx},A_{yy})$-bicomodule algebra. 

The definition of a semi-Hopf opcategory is exactly dual to the definition of semi-Hopf category, in the sense that a semi-Hopf $\Vv$-opcategory is exactly a semi-Hopf $\Vv^{op}$-category. However, it is important to obsever that the notion of a morphism between semi-Hopf $\Vv$-opcategories is {\em different} from a morphism of semi-Hopf $\Vv^{op}$-categories ! Indeed, a morphism of semi-Hopf $\Vv$-opcategories $\ul f:\ul A\to \ul B$ consists of a map $f:A^0\to B^0$ and a collection of $\Vv$-morphisms $f_{xy}:A_{xy}\to B_{fxfy}$ satisfying axioms. On the other hand, a morphism of $\Vv^{op}$-categories $\ul f:A\to B$ consists of a collection of $\Vv$-morphisms $f_{xy}:B_{fxfy}\to A_{xy}$ satisfying axioms. Therefore the categories $\Vv$-$\opsHopf$ and $\Vv^{op}$-$\sHopf$ are truly different, and the theory of semi-Hopf opcategories can not be deduced from the theory of semi-Hopf categories by direct dualization ! The latter category is what has been called "dual (semi-)Hopf categories" in \cite{BCV}. 
 For this reason, we have to treat semi-Hopf opcategories separately in what follows.

\begin{Definition}
A semi-Hopf $\Vv$-opcategory $\ul H$ is called {\em Hopf $\Vv$-opcategory}, if it is equipped with a family of $\Vv$-morphisms $s_{xy}: H_{xy} \to H_{yx}$, called the {\em antipode} satisfying: \begin{eqnarray*}
\mu_{xy}(A_{xy}\ot s_{yx})d_{xyx}&=&\eta_{xy} e_{x},\\
\mu_{yx}(s_{xy}\ot A_{yx})d_{xyx}&=&\eta_{yx} e_{x}. 
\end{eqnarray*}
The category of Hopf $\Vv$-categories and morphisms of semi-Hopf categories between them is denoted by $\Vv$-$\opHopf$.
\end{Definition}

\subsection{Measuring and comeasuring semi-Hopf categories for $\Omega$-algebras}

\subsubsection{Measuring coalgebras}

Measuring coalgebras and universal measuring coalgebras between algebras were introduced by Sweedler in \cite{Sweedler}, but can be considered for general algebraic structures with structure maps of arbitrary arity, called $\Omega$-algebras, see \cite{AGV}.

\begin{definition}
Let $\Omega$ be a set with two maps $s,t:\Omega\to \NN$ called respectively {\em source} and {\em target}. An $\Omega$-algebra is a vector space $A$ endowed with linear maps $\omega_A:A^{\ot s(\omega)}\to A^{\ot t(\omega)}$ for all $\omega\in \Omega$. A {\em morphism} of $\Omega$-algebras $f:A\to B$ is a linear map satisfying $f^{\ot t(\omega)}\circ \omega_A = \omega_B\circ f^{\ot s(\omega)}$ for all $\omega\in \Omega$. We denote the category of $\Omega$-algebras and morphisms of $\Omega$-algebras by $\Omega$-$\Alg$. Furthermore, given a class $\Xx$ of $\Omega$-algebras we define the full sub-category of $\Omega$-algebras in $\Xx$ by $\Omega$-$\Alg(\Xx)$.
\end{definition}

By considering suitable choices of $(\Omega,s,t)$, algebras, coalgebras, Frobenius algebras and many other algebraic structures can be viewed as $\Omega$-algebras. For example, in case of Frobenius algebras, we have $\Omega=\{\mu,\eta,\Delta,\nu\}$.
Remark that in the definition of $\Omega$-algebra, we do not suppose any coherence conditions (such as associativity of coassociativity) for the considered structure maps. Therefore, it is often useful not to consider the category of all $\Omega$-algebras for a given $\Omega$, but rather to consider a suitable (full) subcategory of it, that is, to consider a suitable class $\Xx$ of $\Omega$-algebras, for example associative algebras with $\Omega=\{\mu,\eta\}$.

\begin{definition}\delabel{measuring}
Let $P$ be a coalgebra, $A$ and $B$ two $k$-vector spaces. We call a linear map $\psi: P \ot A \to B$ a {\em measuring} from $A$ to $B$. By hom-tensor relations, a measuring is equivalent to a linear map $\hat\psi:P\to \Hom(A,B)$. Consequently, we will use the following notation for the image under $\psi$:
$$\psi(p\ot a)=p(a).$$
Furthermore, for any $n\in \NN_0$, we then define and denote $\psi_n:P\ot A^{\ot n}\to B^{\ot n}$ by 
$$\psi_n(p\ot a)=p(a):=p^{(1)}(a_1)\ot \cdots \ot p^{(1)}(a_n) \in B^{\ot n},$$
for any $a=a_1\ot\cdots \ot a_n\in A^{\ot n}$. We also define $\psi_0=\epsilon_P:P\cong P\ot A^{\ot 0}\to B^{\ot 0}\cong k$.

Suppose now that $A$ and $B$ are $\Omega$-algebras. Then a measuring $\psi: P \ot A \to B$ is said to be a {\em measuring of $\Omega$-algebras} if and only if for all $p\in P$, $\omega\in \Omega$ and $a\in A^{\ot s(\omega)}$ we have (with notation introduced above):
$$p(\omega_A(a)) = \omega_B(p(a)) \in B^{\ot t(\omega)}.$$
A measuring coalgebra is denoted as a pair $(P,\psi)$.

If $(P,\psi)$ and $(P',\psi')$ are two measurings from $A$ to $B$, then a morphism of measurings is a coalgebra map $f:P\to P'$ such that $\psi= \psi'\circ (f\ot A)$. Measurings from $A$ to $B$ and their morphisms form a category $\Meas(A,B)$.
\end{definition}

\begin{lemma}\label{le:meascomposition}
\begin{enumerate}[(1)]
\item A linear map $f:A\to B$ is a morphism of $\Omega$-algebras if and only if the canonically associated map $f:k\ot A\cong A\to B$ makes $k$ into a measuring coalgebra from $A$ to $B$.
\item Let $A$, $B$ and $C$ be $\Omega$-algebras, $(P,\psi)$ be a measuring of $\Omega$-algebras from $A$ to $B$ and $(P',\psi')$ be a measuring of $\Omega$-algebras from $B$ to $C$. Then $(P'\ot P,\psi''=\psi' \circ (P'\ot \psi))$ is a a measuring of $\Omega$-algebras from $A$ to $C$.
\item Let $A$ and $B$ be $\Omega$-algebras, let $(P,\psi)$ be a measuring of $\Omega$-algebras from $A$ to $B$ and $f:P'\to P$ a morphism of coalgebras. Then $(P',\psi'=\psi\circ (f\ot A))$ is again a measuring of $\Omega$-algebras.
\item If $(P,\psi)$ is a measuring of $\Omega$-algebras from $A$ to $B$, then for any grouplike element $g\in P$, the map $\psi(g\ot -):A\to B$ is a morphism of $\Omega$-algebras.
\end{enumerate}
\end{lemma}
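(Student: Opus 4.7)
\medskip

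\noindent\textbf{Proof plan.} All four statements are essentially formal unpackings of the definition of measuring of $\Omega$-algebras, so the plan is to verify each in turn by carefully tracking the coalgebra structure used to build $\psi_n$ from $\psi$.

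For (1), I would note that the ground field $k$ carries the trivial coalgebra structure with $\Delta(1)=1\ot 1$ and $\epsilon(1)=1$, so $1$ is grouplike and $1^{(i)}=1$ in all iterated comultiplications. Hence $\psi_n(1\ot a_1\ot\cdots\ot a_n)=f(a_1)\ot\cdots\ot f(a_n)=f^{\ot n}(a)$, and the measuring condition $1(\omega_A(a))=\omega_B(1(a))$ collapses exactly to the $\Omega$-algebra morphism condition $f^{\ot t(\omega)}\circ\omega_A=\omega_B\circ f^{\ot s(\omega)}$. The equivalence is then immediate in both directions.

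For (3), I would observe that since $f:P'\to P$ is a coalgebra morphism, iterated comultiplication gives $(f(p'))^{(1)}\ot\cdots\ot(f(p'))^{(n)}=f(p'^{(1)})\ot\cdots\ot f(p'^{(n)})$, so $\psi_n'(p'\ot a)=\psi_n(f(p')\ot a)$. The measuring condition for $\psi'$ is then obtained by applying the measuring condition for $\psi$ to $f(p')$. For (4), the grouplike assumption $\Delta(x)=x\ot x$ forces $x^{(i)}=x$ in every slot, hence $\psi_n(x\ot a)=(\psi(x\ot -))^{\ot n}(a)$, and the measuring condition specialized to $p=x$ is exactly the statement that $\psi(x\ot -)$ commutes with each $\omega$, i.e.\ is a morphism of $\Omega$-algebras.

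The main (and only mildly technical) step is (2), the composition. The coalgebra structure on $P'\ot P$ is the usual tensor product coalgebra, so iterated comultiplication satisfies
\[
(p'\ot p)^{(1)}\ot\cdots\ot (p'\ot p)^{(n)} \;=\; (p'^{(1)}\ot p^{(1)})\ot\cdots\ot (p'^{(n)}\ot p^{(n)}).
\]
Since $\psi''(p'\ot p\ot a)=p'(p(a))$, this yields $\psi''_n(p'\ot p\ot a)=p'(p(a))$ in the iterated sense as well, namely $\psi''_n=\psi'_n\circ(P'\ot \psi_n)$. Using this identity I would compute, for $\omega\in\Omega$ and $a\in A^{\ot s(\omega)}$,
\[
(p'\ot p)(\omega_A(a)) = p'\bigl(p(\omega_A(a))\bigr) = p'\bigl(\omega_B(p(a))\bigr) = \omega_C\bigl(p'(p(a))\bigr) = \omega_C\bigl((p'\ot p)(a)\bigr),
\]
where the second equality uses that $(P,\psi)$ measures $\Omega$-algebras from $A$ to $B$, and the third uses that $(P',\psi')$ measures $\Omega$-algebras from $B$ to $C$. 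The only point deserving care is to keep straight the bookkeeping between the formal $\psi_n$ notation and the Sweedler-type shorthand $p(a)$; once that is laid out, (2) is a one-line verification, and (1), (3), (4) follow without further work.
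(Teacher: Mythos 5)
Your proof is correct and follows essentially the same route as the paper: the key step (2) is the same one-line computation $(p'\ot p)(\omega_A(a))=p'(\omega_B(p(a)))=\omega_C((p'\ot p)(a))$ using the tensor-product coalgebra structure, and (1) and (3) are the same direct verifications. The only cosmetic difference is (4), which you verify directly from $\Delta(x)=x\ot x$ whereas the paper deduces it from the earlier items via the coalgebra map $k\to P$, $1\mapsto x$; both arguments are equally valid.
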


\begin{proof}
\ul{(1)}. This follows directly from the definitions, taking into account the unique trivial\footnote{By this we mean the unique $k$-coalgebra structure on $k$ having the identity map as counit.} coalgebra structure on $k$.\\
\ul{(2)}. For any $\omega\in \Omega$, any $p\ot q\in P \ot P'$ and any $a\in A^{\ot s(\omega)}$ we find
$$(p\ot q)(\omega_A(a)) = p(q(\omega_A(a))) = p(\omega_B(q(a))) = \omega_C(p(q(a)))=\omega_C((p\ot q)(a)),$$
where the first and last equality follow from the definition of $\psi''$ as in the statement (and the notation introduced above for the image under the measuring map) and the two intermediate equalities express the measuring property of $(P',\psi)'$ and $(P,\psi)$. \\
\ul{(3)}. This follows again by direct computation. For any $\omega\in \Omega$, any $p'\in P'$ and any $a\in A^{\ot s(\omega)}$ we find
$$p'(\omega_A(a))=f(p')(\omega_A(a))=\omega_B(f(p')(a))=\omega_B(p'(a)),$$
where the first and last equality follow from the definition of $\psi'$ as in the statement and the intermediate equality expresses the measuring property of $(P,\psi)$. \\
\ul{(4)}. This follows by combining (2) with (3), taking into account that a $g\in P$ is a grouplike element if and only if $f:k\to P, f(1_k)=g$ is a morphism of coalgebras.
\end{proof}

\begin{remarks}
Similarly to the fact that then any grouplike element $g$ in a coalgebra $P$ measuring from $A$ to $B$ induces a morphism of $\Omega$-algebras from $A$ to $B$ (see (4) in the above lemma), each $g$-primitive element $x$ of $P$ (i.e.\ $\Delta(x)=x\ot g+g\ot x$) induces what one could call a $g$-derivation from $A$ to $B$. 

By item (2) of the above Lemma, given any class $\Xx$ of $\Omega$-algebras, we can build a new category $\Mm(\Xx)$ out of it, whose objects are the elements of $\Xx$, and where for any pair of objects $A,B\in \Xx$, we define $\Hom_{\Mm(\Xx)}(A,B)$ to be the collection of all (isomorphism classes of) measurings $(P,\psi)$ from $A$ to $B$, where composition is defined as in (2). 
By item (1) of the Lemma, the one-dimensional measurings then correspond exactly to the classical morphisms. However, this might lead to some set-theoretical issues, since it is not clear that the collection of all measuring coalgebras from $A$ to $B$ (even up to isomorphism) would form a set. Therefore, we will follow another approach and use universal measurings to define a coalgebra-enriched category, which basically contains the same information as the category described in this remark.
\end{remarks}

Consider two $\Omega$-algebras $A$ and $B$. Let $C(\Hom(A,B))$ be the cofree coalgebra over the $k$-linear Hom-space $\Hom(A,B)$, and denote the associated (universal) projection by $p:C(\Hom(A,B))\to \Hom(A,B)$. Then for any coalgebra $P$ that is measuring from $A$ to $B$, we can consider the associated map $P\to \Hom(A,B), p\mapsto \psi(p\ot -)$. By the universal property of the cofree coalgebra, we therefore obtain a unique morphism of coalgebras $u:P\to C(\Hom(A,B))$ such that $\psi=p\circ u$. On the other hand, we can consider a measuring map 
$$\psi_{AB};\xymatrix{ C(\Hom(A,B)) \ot A \ar[rr]^-{p\ot A} && \Hom(A,B)\ot A \ar[rr]^-{\sf ev} && B},$$
where $\sf ev$ is the usual evaluation map. In general however, $\psi_{AB}$ is not a measuring of $\Omega$-algebras from $A$ to $B$. Nevertheless, in view of the above, we can consider the sum of all 
all (finite dimensional) sub-coalgebras of $C(\Hom(A,B))$, for which the restriction of $\psi_{AB}$ is a measuring of $\Omega$-algebras. The coalgebra obtained in this way {\em is} measuring from $A$ to $B$ as $\Omega$-algebras, and is moreover a terminal object in the category $\Meas(A,B)$, because the map $u$ above factors through this object. Explicitly, the measuring $C(\Hom(A,B))$ is universal in the following sense.
\begin{definition}
Let $A$ and $B$ be $\Omega$-algebras. The {\em universal measuring coalgebra} from $A$ to $B$, denoted $(\Cc(A,B),\psi_{BA})$ is a measuring coalgebra from $A$ to $B$ such that for any (other) measuring coalgebra $(P,\psi)$ from $A$ to $B$, there exists a unique coalgebra morphism $\theta:P\to \Cc(A,B)$ making the following diagram commutative
$$\xymatrix{ P \ot A \ar[r]^\psi \ar@{.>}[d]_{\exists! \theta \ot A} & B \\
            \Cc(A,B) \ot A \ar[ur] _{\psi_{BA}} & }.$$
\end{definition}

An immediate consequence of this definition is the following useful observation.

\begin{lemma}\label{uniquemorphismuniversal}
Consider two $\Omega$-algebras $A$ and $B$ and let $(\Cc(A,B),\psi_{BA})$ the universal measuring coalgebra from $A$ to $B$. Then for any pair of limear maps $f,g:P\to \Cc(A,B)$, where $f$ is a coalgebra morphism, we have that $f=g$ (hence $g$ is a coalgebra morphism as well) if and only if 
$$\psi_{BA}\circ (f\ot A)=\psi_{BA}\circ (g\ot A)$$
\end{lemma}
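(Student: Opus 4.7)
The plan is to reduce the statement directly to the uniqueness part of the universal property that defines $\Cc(A,B)$.

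One implication is trivial: if $f=g$, then clearly $\psi_{BA}\circ(f\ot A)=\psi_{BA}\circ(g\ot A)$.

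For the converse, assume that $\psi_{BA}\circ(f\ot A)=\psi_{BA}\circ(g\ot A)$ and denote this common linear map $P\ot A\to B$ by $\psi$. First I would observe that $(P,\psi)$ is itself a measuring of $\Omega$-algebras from $A$ to $B$: indeed, by \leref{meascomposition}(3), pulling back the universal measuring $(\Cc(A,B),\psi_{BA})$ along any coalgebra morphism into $\Cc(A,B)$ yields a measuring of $\Omega$-algebras, so either of the identities $\psi=\psi_{BA}\circ(f\ot A)$ or $\psi=\psi_{BA}\circ(g\ot A)$ gives this.

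Now apply the universal property of $(\Cc(A,B),\psi_{BA})$ to the measuring $(P,\psi)$: there exists a \emph{unique} coalgebra morphism $\theta:P\to \Cc(A,B)$ such that $\psi=\psi_{BA}\circ(\theta\ot A)$. But by construction both $f$ and $g$ fulfil this factorisation property, so the uniqueness clause forces $f=\theta=g$. There is no substantial obstacle here; the only subtle point is noting that the pullback $(P,\psi)$ is a measuring of $\Omega$-algebras and not merely of vector spaces, which is exactly what \leref{meascomposition}(3) supplies.
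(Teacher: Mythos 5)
Your argument is correct and follows exactly the paper's route: observe via \leref{meascomposition}(3) that the common map $\psi_{BA}\circ(f\ot A)=\psi_{BA}\circ(g\ot A)$ is a measuring of $\Omega$-algebras, and then invoke the uniqueness clause in the universal property of $\Cc(A,B)$ to force $f=g$. The paper's proof is just a terser version of the same reasoning.
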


\begin{proof}
By Lemma~\ref{le:meascomposition} we know that $\psi_{BA}\circ (f\ot A)$ is a measuring, hence $\psi_{BA}\circ (g\ot A)$ is a measuring as well. The statement now follows directly by the universal property of $\Cc(A,B)$.
\end{proof}

The existence of a universal measuring coalgebra for any given pair of $\Omega$-algebras follows from the reasoning above. This was proven in \cite[Theorem 3.10]{AGV} and already in \cite{Sweedler} in case of usual algebras. 
More general situations about the existence of universal measuring coalgebras in general monoidal categories have been treated in \cite{Chris} and \cite{AGV2}, and allow to transfer also the results of the present paper to such a more general setting. However, for clarity we prefer to restrict to the $k$-linear case here. As observed in \cite{Chris}, and already implicitly present in \cite{Sweedler}, the existence of universal measuring coalgebras between all objects of a given category, allows in fact to enrich this category over the category of coalgebras. Reformulated in terms of semi-Hopf categories, this is summarized in the following result.

\begin{proposition}\label{prop:sHcatOmega}
Let $\Xx$ be any class of $\Omega$-algebras, then we can build a semi-Hopf category $\Cc(\Xx)$ as follows. The objects of $\Cc(\Xx)$ are just the elements of $\Xx$. Furthermore, for any pair of objects ($\Omega$-algebras) $A$, $B$ in $\Xx$, we put $\Cc_{B,A}:=\Cc(A,B)$ the universal measuring coalgebra from $A$ to $B$.  

More explicitly, given any triple of $\Omega$-algebras $A,B,C \in \Cc$, there are coalgebra morphisms
$$m_{A,B,C}:\Cc_{A,B} \ot \Cc_{B,C}\to \Cc_{A,C}$$
and 
$$j_A:k\to \Cc_{A,A}$$
satisfying the associativity and unitality conditions as stated in the first two diagrams of Definition~\ref{defsHcat}. In particular, $\Cc_{A,A}$ is a bialgebra for any $A\in \Xx$.

The category $\Omega$-$\Alg(\Xx)$, whose morphisms are $\Omega$-algebra morphisms between objects in $\Xx$, can be recovered from $\Cc(\Xx)$ by restricting the Hom-objects to the grouplike elements in the universal measuring coalgebras.
\end{proposition}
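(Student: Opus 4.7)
The plan is to assemble the required data directly from the universal property of measuring coalgebras, with Lemmas~\ref{le:meascomposition} and~\ref{uniquemorphismuniversal} doing all the real work. First I construct the composition. For any $A,B,C\in\Xx$, Lemma~\ref{le:meascomposition}(2) tells us that the universal measurings $\psi_{AB}:\Cc_{A,B}\otimes B\to A$ and $\psi_{BC}:\Cc_{B,C}\otimes C\to B$ compose to a measuring of $\Omega$-algebras from $C$ to $A$ whose underlying coalgebra is the tensor-product coalgebra $\Cc_{A,B}\otimes\Cc_{B,C}$. By the universal property of $\Cc_{A,C}$, there is a unique coalgebra morphism
\[ m_{A,B,C}:\Cc_{A,B}\otimes\Cc_{B,C}\to\Cc_{A,C} \]
such that $\psi_{AC}\circ(m_{A,B,C}\otimes C)$ is this composite. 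For the units, the identity map $\Id_A:A\to A$ defines via Lemma~\ref{le:meascomposition}(1) a measuring $k\otimes A\cong A\to A$, and universality produces a unique coalgebra map $j_A:k\to\Cc_{A,A}$.

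Next I check the semi-Hopf category axioms. The essential tool is Lemma~\ref{uniquemorphismuniversal}: two coalgebra morphisms into a universal measuring agree iff they induce the same measuring after post-composition with the universal structure map. Associativity of $m$ then reduces to the fact that the triple composite of universal measurings from $D$ to $A$ (built as in Lemma~\ref{le:meascomposition}(2)) is unambiguous, so both $m_{A,B,D}\circ(\Id\otimes m_{B,C,D})$ and $m_{A,C,D}\circ(m_{A,B,C}\otimes\Id)$ factor the same measuring through $\psi_{AD}$ and must coincide. Unitality reduces similarly to the observation that precomposing any measuring with the identity measuring on the source returns the original measuring. Since $m$ and $j$ are coalgebra morphisms by construction (the universal property is stated in the category of coalgebras), all compatibility diagrams in Definition~\ref{defsHcat} are automatically satisfied, and $\Cc(\Xx)$ is a semi-Hopf category. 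In particular, taking $A=B=C$, the object $\Cc_{A,A}$ acquires from $(m_{A,A,A},j_A)$ an associative and unital algebra structure compatible with its coalgebra structure, hence is a bialgebra.

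For the final claim, combining parts (1) and (4) of Lemma~\ref{le:meascomposition} with the universal property gives a bijection between grouplike elements of $\Cc_{B,A}$ and $\Omega$-algebra morphisms $A\to B$: a grouplike $x\in\Cc_{B,A}$ yields the morphism $\psi_{BA}(x\otimes-)$, and conversely an $\Omega$-morphism $f:A\to B$ corresponds via Lemma~\ref{le:meascomposition}(1) to a measuring on the trivial coalgebra $k$, which factors uniquely through $\Cc_{B,A}$ by a coalgebra map $k\to\Cc_{B,A}$, i.e.\ by picking out a grouplike. Chasing the construction of $m_{A,B,C}$ one checks that composition of grouplikes matches composition of the corresponding $\Omega$-morphisms, and $j_A$ picks out the grouplike corresponding to $\Id_A$, so $\Omega$-$\Alg(\Xx)$ is recovered as the subcategory on grouplike elements. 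I do not expect a genuine obstacle: the whole argument is a formal consequence of the universal property, essentially already present in \cite{Sweedler} for ordinary algebras and made explicit for $\Omega$-algebras in \cite{AGV}. The only point requiring care is the bookkeeping enforced by the convention $\Cc_{X,Y}=\Cc(Y,X)$ together with the tensor-factor ordering dictated by Lemma~\ref{le:meascomposition}(2).
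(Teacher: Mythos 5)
Your proof is correct and follows essentially the same route as the paper: composition is obtained from Lemma~\ref{le:meascomposition}(2) plus the universal property of $\Cc_{A,C}$, the unit from the trivial measuring $k\ot A\cong A$, associativity/unitality from uniqueness in the universal property (which the paper invokes implicitly and you make explicit via Lemma~\ref{uniquemorphismuniversal}), and the last statement from the bijection between coalgebra maps $k\to\Cc_{B,A}$ (grouplikes) and measurings on $k$, i.e.\ $\Omega$-algebra morphisms by Lemma~\ref{le:meascomposition}(1). No gaps; your handling of the index convention $\Cc_{X,Y}=\Cc(Y,X)$ is also consistent with the paper's.
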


\begin{proof}
The existence of the coalgebra morphisms $m_{A,B,C}$ and $j_A$, as well as the associativity and unitality conditions for them, follow from the universal property of the universal measuring coalgebras. Indeed, by Lemma~\ref{le:meascomposition}(2), it follows that
\[
\xymatrix{
\psi_{ABC}:
\Cc(B,A) \ot \Cc(C,B) \ot C \ar[rr]^-{\Id \ot \psi_{BC}} && \Cc(B,A) \ot B \ar[rr]^-{\psi_{AB}} && A 
}
\]
is a measuring of $\Omega$-algebras from $C$ to $A$. 
By the universal property of $(\Cc(C,A),\psi_{AC})$ there exists a coalgebra map $m_{A,B,C}$ as in the statement of the proposition. Similarly, the isomorphism $k\ot A\cong A$, which is a measuring of $\Omega$-algebras in a trivial way, implies the existence of the coalgebra map $k\to \Cc_{A,A}$ by the universal property of $(\Cc(A,A),\psi_{AA})$. The associativity and counitality axioms then following again by the unicity in the universal properties of the universal measuring coalgebras.

The last statement is a consequence of Lemma~\ref{le:meascomposition}(1). Indeed, by the universal property of $\Cc(A,B)$, there is a bijection between measurings $k\ot B\to A$ and coalgebra morphisms $k\to \Cc(A,B)$. By Lemma~\ref{le:meascomposition}(1), the former is exactly a morphism of $\Omega$ algebras $B\to A$, where as the latter corresponds to a grouplike element in $\Cc(A,B)$.
\end{proof}

 \begin{remark}
As a consequence of the above proposition we deduce that $\Cc(A,A)$ is a bialgebra, given it is an endo-hom object in a semi-Hopf category. Furthermore, it acts on $A$ and this action is universal in the sense that for any other bialgebra $B$ acting on $A$ via some $\psi: B \ot A\to A$, there exists a bialgebra morphism $\theta: B\to \Cc_{AA}$ such that  
$$\xymatrix{ B \ot A \ar[r]^\psi \ar@{.>}[d]_{\exists! \theta \ot A} & A \\
            \Cc(A,A) \ot A \ar[ur] _{\psi_{AA}} & }.$$
This was shown in \cite{AGV}, by proving that the coalgebra morphism $\theta$ given by the universal measuring condition already preserves the algebra structure. We therefore call $\Cc_{AA}$ the {\em universal acting bialgebra} of $A$.
\end{remark}
\subsubsection{Comeasuring algebras}

The dual notion of a measuring coalgebra is a comeasuring algebra. We recall the notion here, however its existence requires some finiteness conditions.

\begin{definition}
Let $Q$ be an algebra, $A$ and $B$ two vector spaces. We call a linear map $\rho: A \to B \ot Q$, denoted by $\rho(a)=a^{[0]}\ot a^{[1]}\in B\ot Q$, a {\em comeasuring} from $A$ to $B$. For any $n\in \NN_0$, we then define and denote $\rho_n:A^{\ot n}\to B^{\ot n} \ot Q$ by 
$$\rho_n(a)=a^{[0]}\ot a^{[1]} := a_1^{[0]}\ot \cdots \ot a_n^{[0]}\ot (a_1^{[1]}\cdots a_n^{[1]}) \in B^{\ot n}\ot Q.$$
for any $a=a_1\ot\cdots \ot a_n\in A^{\ot n}$. Moreover, we define $\rho_0:A^{\ot 0}\cong k \to B^{\ot 0}\ot Q\cong Q$ to be just the unit map of $Q$. 

Suppose now that $A$ and $B$ are $\Omega$-algebras. Then a comeasuring $\rho: A \to B \ot Q$ is said to be a {\em comeasuring of $\Omega$-algebras} if and only if for all $\omega\in \Omega$ and $a\in A^{\ot s(\omega)}$ we have (with notation introduced above):
$$\rho^{t(\omega)}(\omega_A(a)) = (\omega_B\ot Q)(\rho^{s(\omega)}(a)) \in B^{\ot t(\omega)}\ot Q,$$
A comeasuring algebra is denoted as a pair $(Q,\rho)$. Comeasurings from $A$ to $B$ form in a natural way a category denoted by $\Comeas(A,B)$.

A comeasuring algebra of $\Omega$-algebras $(\Aa(A,B),\rho^{BA})$ from $A$ to $B$ is said to be {\em universal} if for any other comeasuring algebra $(Q,\rho)$, there exists a unique algebra morphism $\phi:\Aa(A,B)\to Q$ such that 
\[\xymatrix{ A \ar[r]^\rho \ar[dr] _{\rho^{BA}} & B\ot Q \ar@{.>}[d]^{\exists! B \ot \phi} \\
           &  B \ot \Aa(A,B)}.
           \] 
A univeral comeasuring algebra is an initial object in $\Comeas(A,B)$.
\end{definition}

In general, a universal comeasuring algebra does not always exist, see for example  \cite[Example 4.20]{AGV}. However, in the same paper, sufficient conditions for the existence are given, under a suitable restriction on the "support" of the comeasuring. In particular, if $B$ is finite dimensional, then a universal comeasuring algebra of the form $\Aa(A,B)$ always exists, and can be constructed by quotienting the free algebra over $B^*\ot A\cong \Hom(B,A)$ by an ideal generated by the comeasuring relations (see \seref{examples}) for more details.

The following result can be proven in a dual way as Lemma~\ref{le:meascomposition}.

\begin{lemma}\lelabel{charactersvsmorphisms}
\begin{enumerate}[(1)]
\item A linear map $f:A\to B$ is a morphism of $\Omega$-algebras if and only if the canonically associated map $f:A\to B\cong B\ot k$ makes $k$ into a comeasuring algebra from $A$ to $B$.
\item Let $A$, $B$ and $C$ be $\Omega$-algebras, $(Q,\rho)$ be a comeasuring of $\Omega$-algebras from $A$ to $B$ and $(Q',\rho')$ be a comeasuring of $\Omega$-algebras from $B$ to $C$. Then $(Q'\ot Q,\rho''=(\rho'\ot Q) \rho )$ is a a comeasuring of $\Omega$-algebras from $A$ to $C$.
\item Let $A$ and $B$ be $\Omega$-algebras, let $(Q,\rho)$ be a comeasuring of $\Omega$-algebras from $A$ to $B$ and $f:Q\to Q'$ a morphism of algebras. Then $(Q',\rho'=(A\ot f)\circ \rho)$ is again a comeasuring of $\Omega$-algebras.
\item If $(Q,\rho)$ is a comeasuring of $\Omega$-algebras from $A$ to $B$, then for an algebra morphism $f:Q\to k$ (i.e. a {\em character}), the map $\hat f: A\to B, \hat f(a)=a^{[0]}f(a^{[1]})$ is a morphism of $\Omega$-algebras. 
\end{enumerate}
\end{lemma}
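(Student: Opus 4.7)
The plan is to mirror the proof of Lemma~\ref{le:meascomposition} throughout, since the comeasuring axiom $\rho_{t(\omega)}\circ \omega_A = (\omega_B\ot Q)\circ \rho_{s(\omega)}$ is obtained from the measuring axiom by reversing the arrows in $\Vect_k$. The only genuinely new bookkeeping compared to the measuring case is that the iterated map $\rho_n$ already carries a single copy of $Q$ obtained from multiplying together $n$ factors of $Q$, and one must check that compositions and post-composition with algebra maps interact correctly with this multiplication.

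For \textbf{(1)}, the identification $B\cong B\ot k$ sends $f$ to the comeasuring $\rho=f\ot 1_k$, and by the very definition of $\rho_n$ (and the fact that $1_k$ is the unit of the trivial algebra $k$) one computes $\rho_n = f^{\ot n}\ot 1_k$. The comeasuring axiom for a fixed $\omega\in\Omega$ then reduces, after discarding the trailing $1_k$, to $f^{\ot t(\omega)}\circ\omega_A = \omega_B\circ f^{\ot s(\omega)}$, which is exactly the $\Omega$-algebra morphism condition. For \textbf{(3)}, since $f:Q\to Q'$ is an algebra map it commutes with $n$-fold products, so $\rho'_n = (B^{\ot n}\ot f)\circ \rho_n$. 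Post-composing the comeasuring axiom for $(Q,\rho)$ with $B^{\ot t(\omega)}\ot f$ then yields the same axiom for $(Q',\rho')$, because $\omega_B\ot Q$ obviously commutes with $B^{\ot t(\omega)}\ot f$.

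For \textbf{(2)}, one first verifies that for each $n$ the iterated comeasuring of $\rho''$ agrees, under the isomorphism $C^{\ot n}\ot Q'\ot Q \cong C^{\ot n}\ot (Q'\ot Q)$, with $(\rho'_n\ot Q)\circ \rho_n$. This uses that the tensor product algebra structure on $Q'\ot Q$ is componentwise, so that the $Q'$-components coming from $\rho'$ and the $Q$-components coming from $\rho$ can be multiplied independently. Granting this, for any $\omega\in\Omega$ and $a\in A^{\ot s(\omega)}$ we compute
\[
\rho''_{t(\omega)}\bigl(\omega_A(a)\bigr)
= (\rho'_{t(\omega)}\ot Q)\bigl(\rho_{t(\omega)}(\omega_A(a))\bigr)
= (\rho'_{t(\omega)}\ot Q)\bigl((\omega_B\ot Q)\,\rho_{s(\omega)}(a)\bigr)
= (\omega_C\ot Q'\ot Q)\bigl(\rho''_{s(\omega)}(a)\bigr),
\]
where the middle equality is the comeasuring property of $(Q,\rho)$ and the last is the comeasuring property of $(Q',\rho')$ applied to $\rho_{s(\omega)}(a)\in B^{\ot s(\omega)}\ot Q$, combined with the naturality that lets us pull the $Q$-factor past $\omega_C$.

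Part \textbf{(4)} is then immediate: a character $f:Q\to k$ is an algebra map, so by (3) the pair $(k,\rho'=(B\ot f)\circ\rho)$ is a comeasuring from $A$ to $B$, and its underlying linear map $A\to B\ot k\cong B$ is exactly $\hat f$. By (1), this forces $\hat f$ to be a morphism of $\Omega$-algebras. The only nontrivial step is the compatibility of the iterated map $\rho''_n$ with componentwise multiplication in $Q'\ot Q$ needed in (2); once this combinatorial identity is spelled out explicitly (essentially the definition of the tensor product of algebras), everything else follows by the same style of direct verification used for measurings.
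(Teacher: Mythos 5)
Your proof is correct and takes essentially the same route the paper intends: the paper offers no written argument, saying only that the lemma ``can be proven in a dual way as Lemma~\ref{le:meascomposition}'', and your verifications --- part (1) by unwinding $\rho=f\ot 1_k$, part (3) from $\rho'_n=(B^{\ot n}\ot f)\circ\rho_n$, part (2) from the identity $\rho''_n=(\rho'_n\ot Q)\circ\rho_n$ using the componentwise multiplication on $Q'\ot Q$, and part (4) by combining (3) with (1) --- are precisely those dual computations. Nothing further is needed.
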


\begin{proposition}\label{prop:sHopcatOmega}
Let $\Xx$ be any class of $\Omega$-algebras, such that for any pair of $\Omega$-algebras $A, B$ in $\Xx$, the universal comeasuring algebra $(\Aa(A,B),\rho^{BA})$ exists. 

Then we can build a semi-Hopf opcategory $\Aa(\Xx)$ as follows. The objects  of $\Aa(\Xx)$ are the elements of $\Xx$. Furthermore, for any pair of objects ($\Omega$-algebras) $A$, $B$ in $\Xx$, we put $\Aa_{A,B}=\Aa(B,A)$, the universal comeasuring algebra from $B$ to $A$. 

More explicitly, given any triple of $\Omega$-algebras $A,B,C \in \Cc$, there are algebra morphisms
$$d_{A,B,C}:\Aa_{A,C} \to \Aa_{A,B}\ot \Aa_{B,C}$$
and 
$$e_A:\Aa_{A,A}\to k$$
satisfying the coassociativity and counitality conditions as stated in the first two diagrams of Definition~\ref{defsHopcat}. In particular, $\Aa_{A,A}$ is a bialgebra for any $A$ in $\Xx$.

The category $\Omega$-$\Alg(\Xx)$, whose morphisms are $\Omega$-algebra morphisms between objects in $\Xx$, can be recovered from $\Aa(\Xx)$ by replacing each Hom-object $\Aa(B,A)$ by its set of characters (i.e.\ the set of algebra morphisms $\Hom_{\Alg}(\Aa(A,B),k)$).
\end{proposition}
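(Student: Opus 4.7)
The plan is to dualize the proof of Proposition~\ref{prop:sHcatOmega} step by step, replacing every appeal to Lemma~\ref{le:meascomposition} by the corresponding item of Lemma~\ref{le:charactersvsmorphisms} and every appeal to the universal property of measurings by that of comeasurings. Since a comeasuring $\rho:A\to B\ot Q$ points from source to target tensored with the comeasuring algebra, the direction of composition is reversed compared to measurings; this is precisely why we set $\Aa_{A,B}=\Aa(B,A)$, and why the structure map $d_{A,B,C}$ will go in the opposite direction from the composition $m_{A,B,C}$ of Proposition~\ref{prop:sHcatOmega}.

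First I would construct the structure maps. For $d_{A,B,C}$, apply Lemma~\ref{le:charactersvsmorphisms}(2) to the universal comeasurings $(\Aa(B,A),\rho^{AB})$ from $B$ to $A$ and $(\Aa(C,B),\rho^{BC})$ from $C$ to $B$: their composition $(\rho^{AB}\ot \Aa(C,B))\circ \rho^{BC}:C\to A\ot \Aa_{A,B}\ot \Aa_{B,C}$ is a comeasuring of $\Omega$-algebras from $C$ to $A$. The universal property of $\Aa_{A,C}=\Aa(C,A)$ then yields a unique algebra morphism $d_{A,B,C}:\Aa_{A,C}\to \Aa_{A,B}\ot \Aa_{B,C}$. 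For $e_A$, the identity $\Id_A:A\to A\cong A\ot k$ is trivially a comeasuring of $\Omega$-algebras with comeasuring algebra $k$, and the universal property of $\Aa(A,A)$ provides a unique algebra morphism $e_A:\Aa_{A,A}\to k$.

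Next I would verify the coassociativity and counitality axioms. Given four objects $A,B,C,D$, both compositions $(d_{A,B,C}\ot \Id)\circ d_{A,C,D}$ and $(\Id \ot d_{B,C,D})\circ d_{A,B,D}$ are algebra morphisms $\Aa_{A,D}\to \Aa_{A,B}\ot \Aa_{B,C}\ot \Aa_{C,D}$ that, after postcomposition with $A\ot(-)$ applied to $\rho^{AD}$, produce the same iterated comeasuring from $D$ to $A$ (by the associativity of composition of comeasurings from Lemma~\ref{le:charactersvsmorphisms}(2)). The uniqueness clause in the universal property of $\Aa(D,A)$ then forces the two compositions to coincide. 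Counitality is proved analogously by comparing with the identity comeasuring. Since $d_{A,A,A}$ and $e_A$ are algebra morphisms by construction and satisfy the coassociativity and counitality just established, the bialgebra structure on $\Aa_{A,A}$ follows immediately.

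Finally, for the recovery statement: by Lemma~\ref{le:charactersvsmorphisms}(1), an $\Omega$-algebra morphism $f:B\to A$ is exactly a comeasuring of $\Omega$-algebras from $B$ to $A$ with algebra $k$, which by the universal property corresponds uniquely to an algebra morphism $\Aa_{A,B}=\Aa(B,A)\to k$, i.e.\ a character; conversely, Lemma~\ref{le:charactersvsmorphisms}(4) shows that every character pushes the universal comeasuring forward to an $\Omega$-algebra morphism $B\to A$, yielding the desired bijection. The only mild subtlety in the whole argument is to keep track of the reversed directions in the comeasuring conventions; once this is sorted, every axiom reduces to an instance of universal-property uniqueness and no genuine computation is required.
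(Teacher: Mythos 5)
Your proposal is correct and is essentially the paper's own argument: the paper states this proposition without a separate proof, treating it as the exact dual of Proposition~\ref{prop:sHcatOmega}, and your dualization — building $d_{A,B,C}$ and $e_A$ from Lemma~\ref{le:charactersvsmorphisms}(2) and (1) via the universal (initial) property of the comeasuring algebras, proving coassociativity/counitality by the uniqueness clause, and recovering $\Omega$-algebra morphisms as characters via Lemma~\ref{le:charactersvsmorphisms}(1) and (4) — is precisely that intended proof, with the direction conventions handled correctly.
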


 \begin{remark}
Similarly to the universal acting bialgebra $\Cc(A,A)$ the endo-hom objects of $\Aa(\Xx)$ are bialgebras $\Aa(A,A)$ with a universal coaction. For any other bialgebra $B$ coacting on $A$ via some $\rho: A\to A\ot B$, there exists a bialgebra morphism $\phi: B\to \Aa_{AA}$ such that  
\[\xymatrix{ A \ar[r]^\rho \ar[dr] _{\rho^{AA}} & A\ot B \ar@{.>}[d]^{\exists! A \ot \phi} \\
           &  A \ot \Aa(A,A)},
           \] 
as shown in \cite{AGV}. We therefore call $\Cc_{AA}$ the {\em universal coacting bialgebra} of $A$. For 
\end{remark}

\begin{remark}
Recall that when the considered $\Omega$-algebras are finite dimensional, then $\Aa(A,B)$ is constructed as a quotient of the free algebra over $\Hom(B,A)$. Consequently, there is a canonical map $\Hom(B,A)\to \Aa(A,B)$. Dualizing this map, we obtain a map $\Aa(A,B)^*\to \Hom(B,A)^*\cong \Hom(A,B)$ (recall that $A$ and $B$ are finite dimesnional). If we restrict then to those elements in $\Aa(A,B)^*$ that are algebra morphisms (i.e.\ characters), then the corresponding images are exactly the $\Omega$-algebra maps from $A$ to $B$, and this is exactly the correspondence described in the last statement of the above proposition.
\end{remark}

\section{The Hopf category of Frobenius algebras}\label{HopfFrob}

In this section, we prove the main result of this paper.
In general, the semi-Hopf category of universal measurings from Proposition~\ref{prop:sHcatOmega} is not Hopf. In this case, one can consider the Hopf envelope as constructed in \cite{GV}. However, we will now show that in the case of Frobenius algebras, there exists already an antipode on the category semi-Hopf category of universal measurings of Frobenius algebras, turning it into a Hopf category.
Similarly, the semi-Hopf opcategory of universal comeasurings from Proposition~\ref{prop:sHopcatOmega} will be shown to be Hopf.

Let us first make the definition of measurings between Frobenius algebras explicit.

\begin{definition}
Let $A$ and $B$ be Frobenius algebras and denote their Casimir elements respectively by $e=e^1\ot e^2$ and $f=f^1\ot f^2$. Then a measuring from $A$ to $B$ is a coalgebra $(P,\delta, \epsilon)$ endowed with a linear map $P\ot A\to B$, $p\ot a\mapsto p(a)$, satisfying the following conditions for all $a,a'\in A$ and $p\in P$:
\begin{eqnarray}
p(aa')=p^{(1)}(a)p^{(2)}(a'),& \qquad& p(1_A)=\epsilon(p)1_B\eqlabel{measalg}\\
p(a)^{(1)}\ot p(a)^{(2)}=p^{(1)}(a^{(1)})\ot p^{(2)}(a^{(2)}), &\qquad& \nu_B(p(a))=\epsilon(p)\nu_A(a).\eqlabel{meascoalg}
\end{eqnarray}
Remark that applying the first equality of \equref{meascoalg} to $1_A$, and combining it with the second equality of \equref{measalg} implies
\begin{equation}\eqlabel{measfrob}
\epsilon(p)f^1\ot f^2=p^{(1)}(e^{1})\ot p^{(2)}(e^{2}),
\end{equation}
\end{definition}

As usual, a grouplike element $g\in P$ acts as an automorphism of Frobenius algebras:
\begin{eqnarray}
g(aa')=g(a)g(a'),& \qquad& g(1_A)=1_B\\
g(a)^{(1)}\ot g(a)^{(2)}=g(a^{(1)})\ot g(a^{(2)}), &\qquad& \nu_B(g(a))=\nu_A(a).
\end{eqnarray}
On the other hand, fixing a grouplike $g$ in $P$, a $g$-primitive element $x\in P$ acts as a {\em $g$-bi-derivation}:
\begin{eqnarray}
x(aa')=x(a)g(a') + g(a)x(a'),& \qquad& x(1_A)=0_B\\
x(a)^{(1)}\ot x(a)^{(2)}=x(a^{(1)})\ot g(a^{(2)})+g(a^{(1)})\ot x(a^{(2)}), &\qquad& \nu_B(x(a))=0.
\end{eqnarray}

\begin{theorem} \label{univact}
Let $\Xx$ be any class of Frobenius algebras, and consider the associated semi-Hopf category $\Cc(\Xx)$ as in Proposition~\ref{prop:sHcatOmega}. Then $\Cc(\Xx)$ is Hopf and its antipode is bijective. 

In particular, the universal acting bialgebra $\Cc_{AA}$ on a Frobenius algebra $A$ is a Hopf algebra with bijective antipode.
\end{theorem}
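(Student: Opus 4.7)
The plan is to generalise the inversion formula of \prref{frobiso}: the inverse of a Frobenius morphism $h\colon A\to B$ was $g(b)=e^1_A\nu_B(h(e^2_A)b)$, so the natural candidate for an antipode on the universal measuring coalgebras is obtained by replacing $h$ with an arbitrary measuring $p\in\Cc(A,B)$, the ``multiplicativity of $h$'' ingredients of the Remark after \prref{frobiso} being supplied instead by the coproduct of the measuring. Bijectivity is then obtained by playing the same game with the other Casimir leg of $\Delta_A(1)$, producing a two-sided inverse of the first candidate.

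Concretely, for each pair $A,B\in\Xx$ and each $p\in\Cc(A,B)$, define $\widetilde s(p)\colon B\to A$ by
\[
\widetilde s(p)(b):=e^1_A\,\nu_B(p(e^2_A)\,b),
\]
and verify that $(\Cc(A,B),\widetilde s)$ is a measuring of Frobenius algebras from $B$ to $A$. The unit axiom is immediate from $\nu_A(e^1_A)e^2_A=1_A$. The remaining three axioms are proved by reproducing the four blocks of computation in the Remark following \prref{frobiso}, where each appeal to the multiplicativity, unit-preservation, comultiplicativity or counit-preservation of $h$ is replaced by the corresponding measuring axiom \equref{measalg}, \equref{meascoalg} applied after expanding $\delta(p)=p^{(1)}\ot p^{(2)}$, and where the identity $h(e^1_A)\ot h(e^2_A)=f^1_B\ot f^2_B$ is replaced by its measuring analogue \equref{measfrob}. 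Universality of $\Cc(B,A)$ then lifts $\widetilde s$ to a coalgebra morphism $s_{BA}\colon\Cc(A,B)\to\Cc(B,A)$, our candidate antipode.

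The antipode axioms are verified on measurings via Lemma~\ref{uniquemorphismuniversal}. For the first axiom,
\[
\bigl(p^{(1)}\ot s_{BA}(p^{(2)})\bigr)\cdot b=p^{(1)}(e^1_A)\,\nu_B(p^{(2)}(e^2_A)b)=\epsilon(p)\,f^1_B\,\nu_B(f^2_B b)=\epsilon(p)b
\]
by \equref{measfrob} and the counit--Casimir identity in $B$, matching the measuring induced by $j_B\epsilon_{BA}$. For the second,
\[
\bigl(s_{BA}(p^{(1)})\ot p^{(2)}\bigr)\cdot a=e^1_A\,\nu_B(p(e^2_A a))=\epsilon(p)\,e^1_A\,\nu_A(e^2_A a)=\epsilon(p)a
\]
by the multiplicativity axiom \equref{measalg}, the counit axiom $\nu_B\circ p=\epsilon(p)\nu_A$, and the Casimir identity \equref{casimir}, matching the measuring induced by $j_A\epsilon_{BA}$.

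For bijectivity, the variant formula $\overline s(p)(b):=\nu_B(b\,p(e^1_A))\,e^2_A$ (and its analogue with the roles of $A$ and $B$ swapped) produces, by the same argument, a family of coalgebra maps $\overline s_{xy}\colon\Cc_{xy}\to\Cc_{yx}$. A direct measuring-level computation shows $\overline s_{AB}\circ s_{BA}=\Id$: for $p\in\Cc(A,B)$ and $a\in A$, unfolding the definitions gives
\[
\overline s_{AB}(s_{BA}(p))(a)=\nu_B(p(e^2_A)f^1_B)\,\nu_A(ae^1_A)\,f^2_B=\nu_B(p(a)f^1_B)\,f^2_B=p(a),
\]
using the counit--Casimir identity $\nu_A(ae^1_A)e^2_A=a$ applied linearly inside $p$, and then $\nu_B(zf^1_B)f^2_B=z$. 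The symmetric computation gives $s_{BA}\circ\overline s_{AB}=\Id$, so each $s_{BA}$ is bijective; in particular $\Cc_{AA}$ is a Hopf algebra with bijective antipode. The principal obstacle of the proof is the multiplicativity check for $\widetilde s$ in the first step: it has to juggle two independent copies of the $A$-Casimir, the iterated coproduct of $p$, and the $B$-Casimir, with the Casimir bookkeeping mirroring that of the Remark after \prref{frobiso} and \equref{measfrob} playing the role previously played by the Casimir-preservation of a Frobenius morphism.
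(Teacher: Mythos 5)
Your proposal follows, in substance, the paper's own proof: the candidate antipode $\widetilde s(p)(b)=e^1\nu_B(p(e^2)b)$, the ``other-legged'' inverse candidate, the verification of the two antipode identities at the level of measurings, and the appeal to Lemma~\ref{uniquemorphismuniversal} are exactly what the paper does, and your computations of the antipode identities and of $\overline s_{AB}\circ s_{BA}=\Id$ are correct.

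There is, however, one genuine error in the lifting step. You claim that $(\Cc(A,B),\widetilde s)$, with its \emph{given} coalgebra structure, is a measuring from $B$ to $A$, obtained by rerunning the blocks of the Remark after \prref{frobiso} with $h$ replaced by the successive legs of $\delta(p)=p^{(1)}\ot p^{(2)}$ in the same order. This is false in general, and the computation as you describe it does not close: in the multiplicativity check one reaches $E^1\,\nu_B(p^{(1)}(e^2)b)\,\nu_B(p^{(2)}(E^2)p^{(3)}(e^1)b')$, where a \emph{later} leg of the coproduct sits on $e^1$ and an \emph{earlier} one on $e^2$, which is the wrong order to invoke \equref{measfrob}. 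What the computation actually proves are the twisted identities $\widetilde s(p)(bb')=\widetilde s(p^{(2)})(b)\,\widetilde s(p^{(1)})(b')$ and $\Delta_A\bigl(\widetilde s(p)(b)\bigr)=\widetilde s(p^{(2)})(b^{(1)})\ot \widetilde s(p^{(1)})(b^{(2)})$; that is, $\widetilde s$ is a measuring for the co-opposite coalgebra $\Cc(A,B)^{cop}$, and the universal property yields a coalgebra morphism $s_{BA}:\Cc(A,B)^{cop}\to\Cc(B,A)$, not one from $\Cc(A,B)$. This twist is forced: the antipode of a Hopf category satisfies $\delta_{yx}s_{xy}=\sigma(s_{xy}\ot s_{xy})\delta_{xy}$, so if your unswapped claim held and $s_{BA}$ were bijective, $\Cc(A,B)$ would have to be cocommutative, which there is no reason to expect in general. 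The fix is harmless, since the antipode axioms only require $s_{BA}$ to be a linear map: work with $\Cc_{BA}^{cop}$ throughout, as the paper does, and apply the same correction to your family $\overline s_{xy}$; everything else in your argument then goes through as in the paper.
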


\begin{proof}
Let $A$ and $B$ be two Frobenius algebras in $\Xx$. We want to construct the components $S_{BA}:\Cc_{BA} \to \Cc_{AB}$ of the antipode. Denote by $P=\Cc_{BA}^{cop}$ the co-opposite coalgebra of $\Cc_{BA}$ and the universal measuring $\psi_{BA}: \Cc_{BA}\ot A\to B$. We denote the Casimir element of $A$ by $e^1\ot e^2= E^1 \ot E^2= \varepsilon^1\ot \varepsilon^2$ and the Casimir element of $B$ by $f^1\ot f^2$. We show that $P$ is equipped with a measuring from $B$ to $A$ by means of the map $\bar\psi_{AB}:P\ot B\to A$ given for all $p \in P$ and $b\in B$ by:
$$\bar\psi_{AB}(p\ot b)=\bar p(b):=e^1 \nu_B( \psi_{BA}(p \ot e^2)b)=e^1 \nu_B( p(e^2)b),$$
 where we make use of the notation introduced in \deref{measuring} for measurings.
Let us check that this is indeed a measuring of Frobenius algebras, i.e. that conditions \equref{measalg}-\equref{meascoalg} are satisfied. 
\begin{eqnarray*}
\bar p^{(2)}(b^{(1)})\ot \bar p^{(1)}(b^{(2)})&=&\bar p^{(2)}(f^1)\ot \bar p^{(1)}(f^2b)
=e^1\nu_B(p^{(2)}(e^2)f^1)\ot E^1\nu_B(p^{(1)}(E^2)f^2b)\\
&=&e^1\nu_B(p^{(2)}(e^{2})\epsilon(p^{(3)})f^1)\ot E^1\nu_B(p^{(1)}(E^2)f^2b)\\
&=&e^1\nu_B(p^{(2)}(e^2)p^{(3)}(\varepsilon^1))\ot E^1\nu_B(p^{(1)}(E^2)p^{(4)}(\varepsilon^2)b)\\
&=&e^1\nu_B(p^{(2)}(e^2\varepsilon^1))\ot E^1\nu_B(p^{(1)}(E^2)p^{(3)}(\varepsilon^2)b)\\
&=&\varepsilon^1e^1\nu_B(p^{(2)}(e^2))\ot E^1\nu_B(p^{(1)}(E^2)p^{(3)}(\varepsilon^2)b)\\
&=&\varepsilon^1e^1\nu_A(e^2)\ot E^1\nu_B(p^{(1)}(E^2)p^{(2)}(\varepsilon^2)b)\\
&=&\varepsilon^1\ot E^1\nu_B(p(E^2\varepsilon^2)b)\\
&=&\varepsilon^1\ot \varepsilon^2E^1\nu_B(p(E^2)b)\\
&=&\varepsilon^1\ot \varepsilon^2\bar p(b)\\
&=&(\bar p(b))^{(1)}\ot (\bar p(b))^{(2)},
\end{eqnarray*}
where we used counitality of $P$ in line 2, \equref{measfrob} in line 3, the first equation of \equref{measalg} in line 4, the Casimir property for $e^1\ot e^2$ in line 5, the second equation of \equref{meascoalg}and the conunitality of $P$ in line 6, again \equref{measalg} and $e^1\nu(e^2)=1_A$ in line 7, the Casimir property for $E^1\ot E^2$ in line 8 and finally the definition of $\bar p$ in line 9. Further for the counits we have
\begin{eqnarray*}
\nu_A(\bar p (b))=\nu_A(e^1)\nu_B(p(e^2)b)=\nu_B(p(\nu_A(e^1)e^2)b)=\nu_B(p(1_A)b)=\nu_B(\epsilon(p)1_Bb)=\epsilon(p)\nu_B(b),
\end{eqnarray*}
hence $\bar \psi_{AB}$ satisfies \equref{meascoalg}. We show that it satisfies \equref{measalg}.
\begin{eqnarray*}
\bar p^{(2)}(b) \bar p^{(1)}(b')&=&e^1\nu_B(p^{(2)}(e^2)b)E^1\nu_B(p^{(1})(E^2)b')\\
&=&e^1E^1\nu_B(p^{(2)}(e^2)b)\nu_B(p^{(1)}(E^2)b')\\
&=&E^1\nu_B(p^{(2)}(e^2)b)\nu_B(p^{(1)}(E^2e^1)b')\\
&=&E^1\nu_B(p^{(3)}(e^2)b)\nu_B(p^{(1)}(E^2)p^{(2)}(e^1)b')\\
&=&E^1\nu_P(p^{(2)}) \nu_B(f^2b)\nu_B(p^{(1)}(E^2)f^1b')\\
&=&E^1\nu_B(f^2)\nu_B(\epsilon(p^{(2)})p^{(1)}(E^2)bf^1b')\\
&=&E^1\nu_B(f^2)\nu_B(p(E^2)bf^1b')\\
&=&E^1\nu_B(p(E^2)b\nu_B(f^2)f^1b')\\
&=&E^1\nu_B(p(E^2)bb')\\
&=&\bar p(bb'),
\end{eqnarray*}
where we used the Casimir property for $E^1\ot E^2$ in line 3, the first equation of \equref{measalg} in line 4, \equref{measfrob} in line 5, the Casimir property for $f^1\ot f^2$ in line 6, counitality of $P$ in line 7 and $\nu_B(f^2)f^1=1_B$ in line 9. Finally for the units we have
\begin{eqnarray*}
\bar p (1_B)=e^1\nu_B(p(e^2)1_B)=\epsilon(p) e^1\nu_A(e^2)=\epsilon(p) 1_A.
\end{eqnarray*}
Hence, $\bar \psi_{AB}$ is a measuring of Frobenius algebras and by universal property of $\Cc_{AB}$, there exists a unique map $S_{BA}:P=\Cc_{BA}^{cop}\to \Cc_{AB}$, such that $$\xymatrix{ P \ot B \ar[r]^{\bar\psi_{AB}} \ar@{.>}[d]_{\exists! S_{BA} \ot B} & A \\
            \Cc_{AB} \ot B \ar[ur] _{\psi_{AB}} & }.$$ \\
We show that the morphisms $S_{AB}$ satisfy the antipode conditions for Hopf categories as stated in Definition~\ref{defHcat}. Let $p\in \Cc_{BA}, a\in A, b\in B$ and denote the coalgebra structure on $\Cc_{BA}$ by $\delta_{BA}$ and $\epsilon_{BA}$. Then 
\begin{eqnarray*} 
\bar \psi_{AB}(p^{(1)}\ot \psi_{BA}( p^{(2)}\ot a))&=&e^1\nu_B(p^{(1)}(e^2)p^{(2)}(a))=e^1\nu_B(p(e^2 a))\\
&=&ae^1\nu_B(p(e^2))=a e^1\epsilon_{BA}(p)\nu_A(e^2)=\epsilon_{BA}(p) a \nu_A(e^2) e^1 \\
&=&\epsilon_{BA}(p) \Id_A(a)
\end{eqnarray*}
and conversely
\begin{eqnarray*}
\psi_{BA}(p^{(1)}\ot \bar \psi_{AB}( p^{(2)}\ot b))&=&p^{(1)}(e^1) \nu_B(p^{(2)}(e^2)b)=\epsilon_{BA}(p)f^1\nu_B(f^2b)=\epsilon_{BA}(p) \Id_B(b).
\end{eqnarray*}
This means 
\begin{eqnarray*}
\psi_{AA}(m_{ABA}\ot A)(S_{BA} \ot \Cc_{BA} \ot A)(\delta_{BA}\ot A)&=&\psi_{AB}(\Cc_{AB} \ot \psi_{BA})(S_{BA}\ot \Cc_{BA} \ot A)(\delta_{BA}\ot A)\\
&=&\psi_{AB}(S_{BA}\ot B)(\Cc_{BA} \ot \psi_{BA}) (\delta_{BA}\ot A)\\
&=&\bar \psi_{AB} (\Cc_{BA} \ot \psi_{BA})(\delta_{BA} \ot A)\\
&=&\psi_{AA} (j_{A}\epsilon_{BA} \ot A),
\end{eqnarray*} 
and similarly $\psi_{BB}(m_{BAB}\ot B)(\Cc_{BA} \ot S_{BA} \ot B)(\delta_{BA}\ot B)=\psi_{BB}(j_{B}\epsilon_{BA} \ot B)$. By Lemma~\ref{uniquemorphismuniversal}, we then obtain the antipode condition.

We now show that $S$ is bijective. Denote $P=\Cc_{AB}^{cop}$ and the Casimir element of $B$ by $f^1\ot f^2 = F^1 \ot F^2$. Define $\psi'_{BA}: P \ot A \to B$ by 
$$\psi_{BA}'(p\ot a):= f^2 \nu_A(a \psi_{AB}(p\ot f^1))=f^2 \nu_A(a p(f^1)).$$ We will show that this is a measuring of Frobenius algebras. Firstly, 
\begin{eqnarray*}
 \psi'_{BA}(p^{(2)}\ot a^{(1)})\ot \psi'_{BA}( p^{(1)}\ot a^{(2)})&=&
f^2\nu_A(a^{(1)} p^{(2)}(f^1))\ot F^2\nu_A(a^{(2)}p^{(1)}(F^1))\\
&=&f^2\nu_A(ae^1 p^{(2)}(f^1))\ot F^2\nu_A(e^2p^{(1)}(F^1))\\
&=&f^2\nu_A(ap^{(1)}(F^1)e^1 p^{(2)}(f^1))\ot F^2\nu_A(e^2)\\
&=&f^2\nu_A(ap^{(1)}(F^1)p^{(2)}(f^1))\ot F^2\\
&=&f^2\nu_A(ap(F^1f^1))\ot F^2\\
&=&f^2F^1\nu_A(ap(f^1))\ot F^2\\
&=&\psi'_{BA}(p \ot a)^{(1)} \ot \psi'_{BA}(p \ot a)^{(2)} ,
\end{eqnarray*}
where we used the characterisation of the comultiplication in $A$ via the Casimir element $e^1\ot e^2$ in line 2, the Casimir property of $e^1\ot e^2$ in line 3, $\nu_A(e^2)e^1=1_A$ in line 4, the first equation of \equref{measalg} in line 5, the Casimir property of $f^1\ot f^2$ in line 6 and the characterisation of the comultiplication in $B$ via the Casimir element $F^1\ot F^2$ in line 7. Further we show
\begin{eqnarray*}
\nu_B(\psi'_{BA}(p  \ot a))=\nu_B(f^2) \nu_A(ap(f^1)) =\nu_A(ap(1_B))=
\nu_A(a\epsilon(p)1_A)=\epsilon(p)\nu_A(a),
\end{eqnarray*}
where we used $\nu_B(f^2)f^1=1_B$ and the second equation in \equref{meascoalg}. This shows that $\psi'$ is a measuring of coalgebras. Moreover,
\begin{eqnarray*}
\psi'_{BA}(p^{(2)}\ot a) \psi'_{BA}(p^{(1)}\ot a')&=&f^2\nu_A(ap^{(2)}(f^1))F^2\nu_A(a'p^{(1})(F^1))\\
&=&f^2\nu_A(ap^{(2)}(F^2f^1))\nu_A(a'p^{(1)}(F^1))\\
&=&f^2\nu_A(a(p(f^1))^{(2)})\nu_A(a'(p(f^1))^{(1)})\\
&=&f^2\nu_A(ae^2p(f^1))\nu_A(a'e^1)\\
&=&f^2\nu_A(ae^2a'p(f^1))\nu_A(e^1)\\
&=&f^2\nu_A(aa'p(f^1))\\
&=&\psi'_{BA}( p\ot a'a),
\end{eqnarray*}
where we used the Casimir property of $f^1 \ot f^2$ in line 2, the characterisation of the comultiplication in $B$ via the Casimir element $F^1\ot F^2$ and the first equation of \equref{meascoalg} in line 3,  the characterisation of the comultiplication in $A$ via the Casimir element $e^1\ot e^2$ in line 4, the Casimir property of $e^1\ot e^2$ in line 5 and $\nu_A(e^1)e^2=1_A$ in line 6. Finally the second equation in \equref{meascoalg} and $\nu_B(f^1)f^2=1_B$ yield
\begin{eqnarray*}
\psi'_{BA}(p \ot 1_A)=f^2\nu_B(1_Ap(f^1))=\epsilon(p) f^2\nu_B(f^1)=\epsilon(p) 1_B,
\end{eqnarray*}
therefore $\psi'_{BA}$ is a measuring of Frobenius algebras. 
 Hence there exists a map $S'_{AB}:\Cc_{AB}^{cop} \to \Cc_{BA}$ with 
$$\xymatrix{ P \ot A \ar[r]^{\psi'_{BA}} \ar@{.>}[d]_{\exists! S'_{AB} \ot A} & B \\
            \Cc_{BA} \ot A \ar[ur] _{\psi_{BA}} & }.$$ \\ 
Let $p \in \Cc_{BA}, a\in A$, then:
\begin{eqnarray*}
\psi_{BA} (S'_{AB}S_{BA} \ot A)(p\ot a)&=&\psi'_{BA}(S_{BA}(p)\ot a)\\
&=&f^2 \nu_A(a \psi_{AB}(S_{BA}\ot B)(p\ot f^1))=f^2 \nu_A (a \bar \psi_{AB}(p\ot f^1))\\
&=&f^2 \nu_A (a e^1  \nu_B(p(e^2)f^1))=f^2 \nu_B(p(e^2a)f^1) \nu_A(e^1)\\
&=&f^2 \nu_B(p(\nu_A (e^1)e^2a)f^2) =f^2 \nu_B(p(a)f^1)\\
&=&f^2 p(a) \nu_B(f^1)\\
&=&p(a)=\psi_{BA}(p\ot a)
\end{eqnarray*}
and by universal property of $\Cc_{BA}$ we find $S'_{AB}S_{BA}=\Cc_{BA}$. Analogously for $p \in \Cc_{AB}, b\in B$,
\begin{eqnarray*}
\psi_{AB} (S_{BA}S'_{AB} \ot B)(p\ot b)&=&\bar \psi_{AB}(S'_{AB}(p)\ot b)\\
&=&e^1 \nu_B(\psi_{BA}(S'_{AB}\ot A)(p\ot e^2)b)=e^1 \nu_B(\psi'_{BA}(p\ot e^2)b)\\
&=&e^1 \nu_B(f^2  \nu_A(e^2p(f^1))b)=e^1 \nu_B(f^2 b) \nu_A(e^2p(f^1))\\
&=&p(f^1) e^1 \nu_B(f^2b) \nu_A(e^2)=p(f^1) \nu_B(f^2b)\\
&=&p(bf^1) \nu_B(f^2)\\
&=&p(b)=\psi_{AB}(p\ot b).
\end{eqnarray*}
Therefore, $S_{BA}S'_{AB}=\Cc_{AB}$ by universal property of $\Cc_{AB}$.
\end{proof}

\begin{corollary}
For $A,B$ symmetric Frobenius algebras $S_{AB}^{-1}=S_{BA}$. Particularly $\Cc_{AA}$ is a Hopf algebra with involutive antipode. 
\end{corollary}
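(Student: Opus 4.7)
The strategy is to exploit the fact that the proof of Theorem~\ref{univact} already constructs two antipode-like maps: the antipode $S_{BA}:\Cc_{BA}^{cop}\to \Cc_{AB}$, arising from the measuring $\bar\psi_{AB}(p\ot b)=e^1\nu_B(p(e^2)b)$, together with its two-sided inverse $S'_{AB}:\Cc_{AB}^{cop}\to \Cc_{BA}$, arising from the measuring $\psi'_{BA}(p\ot a)=f^2\nu_A(a\,p(f^1))$. Applying the same theorem to the ordered pair $(B,A)$ in place of $(A,B)$ produces the antipode $S_{AB}:\Cc_{AB}^{cop}\to \Cc_{BA}$ from the measuring $\bar\psi_{BA}(p\ot a)=f^1\nu_A(p(f^2)a)$, where $f^1\ot f^2$ is the Casimir of $B$ and $\nu_A$ the counit of $A$. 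I would therefore prove the corollary by identifying $S_{AB}$ with $S'_{AB}$ under the symmetry hypothesis.

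The key step is to verify that the two measurings $\bar\psi_{BA}$ and $\psi'_{BA}$ from $\Cc_{AB}^{cop}\ot A$ to $B$ coincide when $A$ and $B$ are symmetric. This reduces to a one-line manipulation: symmetry of the Casimir of $B$ gives $f^1\ot f^2=f^2\ot f^1$, hence $f^1\nu_A(p(f^2)a)=f^2\nu_A(p(f^1)a)$; and symmetry of $A$, i.e.\ the trace property $\nu_A(xy)=\nu_A(yx)$, then gives $\nu_A(p(f^1)a)=\nu_A(a\,p(f^1))$. Combining these yields $\bar\psi_{BA}(p\ot a)=\psi'_{BA}(p\ot a)$ as measurings of Frobenius algebras.

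By the uniqueness part of the universal property of $(\Cc_{BA},\psi_{BA})$ (Lemma~\ref{uniquemorphismuniversal}), this equality of measurings forces $S_{AB}=S'_{AB}$. Since Theorem~\ref{univact} shows that $S'_{AB}$ is the two-sided inverse of $S_{BA}$, we immediately obtain $S_{AB}^{-1}=S_{BA}$. Specializing to $A=B$ gives $S_{AA}\circ S_{AA}=\Id$, so the antipode of the Hopf algebra $\Cc_{AA}$ is involutive. I do not expect any substantive obstacle here: the entire argument amounts to recognizing the two parallel measurings already produced in the proof of Theorem~\ref{univact} and invoking symmetry of the Casimir element together with the trace property of the counit.
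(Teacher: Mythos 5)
Your proposal is correct and follows essentially the same route as the paper: the paper's own proof also observes that under the symmetry hypotheses ($\nu(ab)=\nu(ba)$ and $e^1\ot e^2=e^2\ot e^1$) the measuring $\psi'$ used to construct the inverse coincides with the measuring $\bar\psi$ defining the antipode for the swapped pair, and then invokes the universal property to conclude $S_{BA}^{-1}=S_{AB}$. Your write-up is in fact somewhat more explicit than the paper's two-line argument, but there is no substantive difference.
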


\begin{proof}
For symmetric Frobenius algebras we have that $\nu(ab)=\nu(ba)$ and dually $e^1\ot e^2=e^2\ot e^1$. Hence the measuring $\psi'_{AB}$ constructed in the proof of Proposition~\ref{univact} is the same as $\bar \psi_{AB}$ hence by universal property $S_{BA}^{-1}=S_{AB}$.
\end{proof}

Let us now state 
the analogous result for comeasurings between Frobenius algebras.

\begin{definition}
Let $A$ and $B$ be Frobenius algebras and denote their Casimir elements respectively by $e=e^1\ot e^2$ and $f=f^1\ot f^2$. Then a comeasuring from $A$ to $B$ is an algebra $Q$ endowed with a linear map $A\to B\ot Q$, denoted $\rho(a)=a^{[0]}\ot a^{[1]}$, satisfying the following conditions for all $a \in A$:
\begin{eqnarray}
(aa')^{[0]}\ot (aa')^{[1]} = a^{[0]}a'^{[0]}\ot a^{[1]}a'^{[1]} &\quad & (1_A)^{[0]}\ot (1_A)^{[1]} = 1_B\ot 1_Q \eqlabel{comeasalg}\\
\eqlabel{comeas1}a^{(1)[0]}\ot a^{(2)[0]}\ot a^{(1)[1]} a^{(2)[1]}= a^{[0](1)}\ot a^{[0](2)}\ot a^{[1]}, &&
\nu_A(a)1_Q= \nu_B (a^{[0]})a^{[1]}.\eqlabel{comeas2}
\end{eqnarray}
Remark that applying the first equality of \equref{comeas1} to $1_A$ and combining it with the second equality of \equref{comeasalg} leads to
\begin{equation}\eqlabel{comeasfrob2}
e^{1[0]} \ot e^{2[0]} \ot e^{1[1]}e^{2[1]} = f^1\ot f^2 \ot 1_Q.
\end{equation}
\end{definition}

\begin{theorem} \label{univcoact}
Let $\Xx$ be any class of Frobenius algebras, and consider the associated semi-Hopf opcategory $\Aa(\Xx)$ as in Proposition~\ref{prop:sHopcatOmega}. Then $\Aa(\Xx)$ is Hopf and its antipode is bijective. 

In particular, the universal coacting bialgebra $\Aa_{AA}$ on a Frobenius algebra $A$ is a Hopf algebra with bijective antipode.

For $A,B$ symmetric Frobenius algebras we have that the antipode of the Hopf category $\Aa(\Xx)$ satisfies $S_{AB}^{-1}=S_{BA}$. Particularly, $\Aa_{AA}$ is a Hopf algebra with involutive antipode if $A$ is a symmetric Frobenius algebra. 
\end{theorem}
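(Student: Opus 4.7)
The plan is to dualize the proof of Theorem~\ref{univact} throughout, building the antipode component $S_{AB}:\Aa_{AB}\to\Aa_{BA}$ by exhibiting a comeasuring of Frobenius algebras from $B$ to $A$ whose codomain is the opposite algebra $\Aa_{BA}^{op}$. The universal property of $\Aa_{AB}=\Aa(B,A)$ will then produce a unique algebra morphism $\Aa_{AB}\to\Aa_{BA}^{op}$, which is exactly an anti-algebra map $S_{AB}:\Aa_{AB}\to\Aa_{BA}$; this is the natural dual of the fact that the antipode of a Hopf category is an anti-coalgebra morphism.

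Writing $\rho^{BA}(a)=a^{[0]}\ot a^{[1]}\in B\ot\Aa_{BA}$ for the universal comeasuring from $A$ to $B$, and $e^1\ot e^2$ for the Casimir of $A$, I would define
$$\bar\rho:B\longrightarrow A\ot\Aa_{BA}^{op},\qquad \bar\rho(b)=e^1\,\nu_B(e^{2[0]}b)\ot e^{2[1]}.$$
The verification that $\bar\rho$ satisfies the four comeasuring-of-Frobenius axioms \equref{comeasalg}, \equref{comeas1} and \equref{comeas2} is a line-by-line dual of the calculations in Theorem~\ref{univact}: every use of the comultiplication and counit of a measuring coalgebra is replaced by a use of the multiplication and unit of $\Aa_{BA}$ (which is precisely why the codomain must be $\Aa_{BA}^{op}$, since when expanding $\bar\rho(bb')$ the two factors of $\rho^{BA}$ appear in reversed order), and every application of \equref{measfrob} is replaced by one of \equref{comeasfrob2}. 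The Casimir property for $A$ and the counitality identities $\nu_A(e^1)e^2=1_A=e^1\nu_A(e^2)$ play exactly the same roles as before.

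The two antipode identities for a Hopf opcategory are then checked by composing both sides of each with the universal comeasuring $\rho^{AA}$ (resp.\ $\rho^{BB}$) and collapsing the composite to $\Id_A$ (resp.\ $\Id_B$), mirroring the reduction to $\epsilon\cdot\Id$ near the end of the measuring proof; unicity in the universal property of $\Aa_{AA}$ (the evident analogue of \leref{uniquemorphismuniversal}) then forces the equalities. For bijectivity, I would repeat the construction from the other side, using $\rho^{AB}:B\to A\ot\Aa_{AB}$ and the Casimir $f^1\ot f^2$ of $B$ to define the companion comeasuring
$$\bar\rho'(a)=f^2\,\nu_A(a\,f^{1[0]})\ot f^{1[1]}\in B\ot\Aa_{AB}^{op},$$
inducing by universality a map $S'_{BA}:\Aa_{BA}\to\Aa_{AB}$; the composites $S_{AB}S'_{BA}$ and $S'_{BA}S_{AB}$ should reduce to the respective identities using only the counit identities $\nu_A(e^2)e^1=1_A$ and $\nu_B(f^2)f^1=1_B$ together with the universal property. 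In the symmetric case, both Casimirs satisfy $e^1\ot e^2=e^2\ot e^1$ and $f^1\ot f^2=f^2\ot f^1$, so that $\bar\rho$ and $\bar\rho'$ coincide after interchanging $A$ and $B$, forcing $S_{AB}^{-1}=S_{BA}$ by unicity. The main obstacle throughout is bookkeeping: keeping track of the $^{op}$-convention on $\Aa_{BA}$ when multiplying in $\bar\rho(b)\bar\rho(b')$, and matching each dual step to the correct comeasuring axiom so that the argument closes in lockstep with Theorem~\ref{univact}.
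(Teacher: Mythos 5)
Your proposal follows essentially the same route as the paper: you define the very same candidate comeasuring $\bar\rho^{AB}(b)=e^1\nu_B(e^{2[0]}b)\ot e^{2[1]}$ with values in $\Aa_{BA}^{op}$, invoke the universal property of $\Aa_{AB}$ to obtain $S_{AB}$, verify the antipode axioms by composing with the universal comeasurings, and prove bijectivity via the companion comeasuring built from the opposite tensor leg of the Casimir (yours is just the paper's $\rho'$ with the roles of $A$ and $B$ interchanged), with the symmetric case handled exactly as in the paper's corollary. The only minor understatement is that collapsing the composites $S$ and $S'$ to the identity also uses the Casimir property and the defining diagrams of both maps, not only the counit identities, but this is a detail of the verification, not a gap in the approach.
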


\begin{proof}
We want to construct the components $S_{AB}: \Aa_{AB}\to A_{BA}^{op}$ of the antipode for the semi-Hopf opcategory $\Aa(\Xx)$. Denote by $Q=\Aa_{BA}^{op}$ the opposite algebra of $\Aa_{BA}$. We show that this algebra is equipped with a comeasuring $\bar \rho^{AB}: B\to A\ot Q$, which for $b\in B$  defined by 
$$\bar \rho^{AB} (b):= e^1 \nu_B(e^{2[0]} b) \ot e^{2[1]} \in A\ot Q$$
where $\rho^{BA}(e^2)=e^{2[0]} \ot e^{2[1]}\in B\ot Q$.\\
Let us check that this is indeed a comeasuring. We denote the Casimir element of $A$ by $e^1\ot e^2= E^1 \ot E^2=  \varepsilon^1\ot \varepsilon^2$ and the Casimir element of $B$ by $f^1\ot f^2$. First we show that $\bar \rho^{AB} : B \to A \ot Q$ is an algebra morphism. Let $b,b'\in B$, then 
\begin{eqnarray*}
\bar \rho^{AB} (b) \bar \rho^{AB}(b')&=&(\nu_B(E^{1[0]} b) E^2\ot E^{1[1]})(\nu_B(e^{1[0]} b') e^2\ot e^{1[1]})\\
&=&E^2e^2\nu_B(E^{1[0]}b)\nu_B(e^{1[0]}b')\ot e^{1[1]}E^{1[1]}\\
&=&E^2\nu_B(e^{2[0]}b)\nu_B(E^{1[0]} e^{1[0]} b')\ot E^{1[1]} e^{1[1]} e^{2[1]}\\
&=&E^2\nu_B((1_{A)})^{[0](2)}b)\nu_B(E^{1[0]} (1_{A})^{[0](1)}b')\ot E^{1[1]} 1^{[1]}_{A}\\
&=&E^2\nu_B(1_B^{(2)}b)\nu_B(E^{1[0]} 1_B^{(1)}b')\ot E^{1[1]} 1_Q\\
&=&E^2\nu_B(f^2b)\nu_B(E^{1[0]} f^1b')\ot E^{1[1]}\\
&=&E^2\nu_B(f^2)\nu_B(E^{1[0]} b f^1 b')\ot E^{1[1]}\\
&=&E^2\nu_B(E^{1[0]} b \nu_B(f^2)f^1 b')\ot E^{1[1]}\\
&=&E^2\nu_B(E^{1[0]} bb')\ot E^{1[1]}\\
&=&\bar \rho^{AB} (bb')
\end{eqnarray*}
where we build the product in the in $B\ot \Aa_{BA}^{op}$ in line 2, we used the Casimir property for $e^1\ot e^2$ and the first equation in \equref{comeasalg} in line 3, we used \equref{comeasfrob2} in line 4, the Casimir property for $f^1\ot f^2$ in line 5, $\nu_B(f^2)f^1=1_B$ in line 7 and finally the definition of $\bar \rho^{AB}$ in line 8.
The unitality follows from \equref{comeas2} applied on $e^2$ and $\nu_A(e^1) e^2=1_A$:
\begin{eqnarray*}
\bar \rho^{AB}(1_B)=e^2\ot \nu_B(e^{1[0]}) e^{1[1]}= \nu_B(e^1)e^2\ot  1_Q= 1_A\ot 1_Q
\end{eqnarray*}

We show the comeasuring conditions for the coalgebras $A$ and $B$ for $b\in B$. Denote the multiplication in $Q$ by $m_Q:Q\ot Q\to Q$, then:
\begin{eqnarray*}
&&(A\ot A\ot \mu_Q)(A\ot \sigma \ot Q)(\bar\rho^{AB} \ot \bar\rho^{AB})\Delta_B(b)\\
&=&(A\ot A\ot \mu_Q)(A\ot \sigma \ot Q)(\bar\rho^{AB} \ot \bar\rho^{AB})(f^1\ot f^2b)\\
&=&\nu_B(e^{1[0]} f^1) e^2\ot \nu_B(E^{1[0]} f^2 b) E^2\ot E^{1[1]} e^{1[1]}\\
&=&\nu_B(e^{1[0]} \varepsilon^{2[0]}) e^2\ot \nu_B(E^{1[0]} \varepsilon^{2[0]}b) E^2\ot E^{1[1]}e^{1[1]} \varepsilon^{1[1]}\varepsilon^{2[1]}\\
&=&\nu_B((e^1 \varepsilon^1)^{[0]}) e^2\ot \nu_B(E^{1[0]} \varepsilon^{2[0]}b) E^2\ot E^{1[1]}(e^1\varepsilon^1)^{[1]}\varepsilon^{2[1]}\\
&=&e^2\ot \nu_B(E^{1[0]} \varepsilon^{2[0]}b) E^2\ot E_{1[1]} \nu_B((e^1 \varepsilon^1)^{[0]}) (e^1\varepsilon^1)^{[1]}\varepsilon^{2[1]}\\
&=&e^2\ot \nu_B(E^{1[0]} \varepsilon^{2[0]}b) E^2\ot E^{1[1]} \nu_A(e^1 \varepsilon^1) \varepsilon^{2[1]}\\
&=&e^2 \varepsilon^2\ot \nu_B(E^{1[0]}e^{1[0]} b) E^2\ot E^{1[1]} \nu_A(\varepsilon^1) e^{1[1]}\\
&=&e^2 \nu_A(\varepsilon^1) \varepsilon^2\ot \nu_B((E^1 e^1)^{[0]} b) E^2\ot (E^1  e^1)^{[1]}\\
&=&e^2 \ot \nu_B((E^1 e^1)^{[0]} b) E^2\ot (E^1  e^1)^{[1]}\\
&=&e^2E^1 \ot \nu_B(e^{1[0]} b) E^2\ot e^{1[1]}\\
&=&e^2\ot \nu_B(e^{1[0]} b) e^3\ot e^{1[1]}=(\Delta_A\ot Q)\bar \rho^{AB} (b),
\end{eqnarray*}

where we used the characterisation of comultiplication in $B$ via the Casimir element $f^1 \ot f^2$ in line 2, \equref{comeasfrob2} in line 3, the first equation of \equref{comeasalg} in line 4, \equref{comeas2} in line 6, the Casimir property of $e^1\ot e^2$ in line 7, $\nu_A(e^2)e^1=1_A$ in line 8 and the characterisation of comultiplication in $A$ via the Casimir element $\varepsilon^1 \ot \varepsilon^2$ in line 9.
Further, using \equref{comeas2}, \equref{comeasfrob2} and $\nu_B(f^1)f^2=1_B$ we have: 
\begin{eqnarray*}
(\nu_A\ot Q) \bar \rho^{AB} (b)&=&\nu_B(e^{1[0]}b)\nu_A(e^2) 1_A e^{1[1]}=\nu_B(e^{1[0]}b)\nu_B(e^{2[0]})e^{2[1]}e^{1[1]}\\
&=&\nu_B(f^1 b)\nu_B(f^2)1_Q=\nu_B(\nu_B(f^2) f^1 b)1_Q\\
&=&\nu_B(b)1_Q\\
\end{eqnarray*}
Hence $\bar \rho^{AB}$ is a comeasuring of Frobenius algebras. 
By universal property we have an algebra homomorphism $S_{AB}:\Aa_{AB} \to \Aa_{BA}^{op}=Q$, such that \[
\xymatrix{ B \ar[r]^-{\bar \rho^{AB}} \ar[dr] _{\rho^{AB}} & A\ot Q  \\
           &  A \ot \Aa_{AB}\ar@{.>}[u]_{\exists! A \ot S_{AB}}} 
\]
commutes. So we show these morphisms satisfy the antipode condition of Hopf opcategories. Denote the algebra structure on $\Aa_{BA}$ by $\mu_{BA}$ and $\eta_{BA}$. Let $b\in B$ and calculate
\begin{eqnarray*} \notag
(B\ot \mu_{BA})(\rho^{BA} \ot \Aa_{BA})\bar \rho^{AB}(b)=\nu_B(e^{2[0]} b)e^{1[0]} \ot e^{1[1]} e^{2[1]}=\nu_B(f^2 b) f^{1}\ot \eta_{AB}=\Id_B(b) \ot \eta_{AB}.
\end{eqnarray*}
This shows, that:
\begin{eqnarray*}
&& (B\ot \mu_{BA})(B \ot \Aa_{BA}\ot  S_{AB})(B \ot d_{BAB}) \rho^{BB}\\
&=&(B\ot \mu_{BA})(B \ot \Aa_{BA}\ot S_{AB})(\rho^{BA} \ot \Aa_{AB}) \rho^{AB}\\
&=&(B\ot \mu_{BA})(\rho^{BA} \ot \Aa_{BA})\bar \rho^{AB}\\
&=&(B\ot \eta_{AB}e_{B})\rho^{BB}
\end{eqnarray*}
Hence by universal property of $\Aa_{BB}$ we obtain $\mu_{BA}(\Aa_{BA}\ot S_{AB})d_{BAB}=\eta_{BA}e_{B}.$ Further
\begin{eqnarray*}
(A\ot \mu_{BA})(\bar \rho^{AB} \ot \Aa_{BA})\rho^{BA}(a)&=&e^1 \nu_B(e^{2[0]}a_{(0)})\ot e^{2[1]}a_{(1)}= \nu_B((e^2a)^{[0]})e^1\ot (e^2a)^{[1]}\\
&=&e^1 \ot \nu_A(e^2a)\eta_{BA}=\Id_{A}(a) \ot \eta_{BA}, 
\end{eqnarray*}
which proves
\begin{eqnarray*}
&&(A\ot \mu_{BA})(A\ot S_{AB}\ot \Aa_{BA})(A\ot d_{ABA}) \rho^{AA} \\
&=& (A\ot \mu_{BA})(A\ot S_{AB}\ot \Aa_{BA})(\rho^{AB} \ot \Aa_{BA}) \rho^{BA}\\
&=&(A\ot \mu_{BA})(\bar \rho^{AB} \ot \Aa_{BA})\rho^{BA}\\
&=&(A\ot \eta_{BA}e_{A})\rho^{AA}.
\end{eqnarray*}
Therefore by universal property of $\Aa_{AA}$ we have $\mu_{BA}(S_{AB}\ot \Aa_{BA})d_{ABA}=\eta_{AB}e_{A}.$\\
To show that $S_{AB}$ is invertible, denote $Q:=\Aa_{BA}^{op}$ and define $\rho'^{AB}: B \to A\ot Q$ by 
$$ \rho'^{AB}(b):=e^2 \nu_B(b e^{1[0]}) \ot e^{1[1]},$$
where $\rho^{BA}(e^1)=e^{1[0]}\ot e^{1[1]}$. We show $\rho'$ is a comeasuring from $B$ to $A$. Let therefore $b,b'\in B$ and calculate
\begin{eqnarray*}
\rho'^{AB} (b) \rho'^{AB}(b')&=&e^2 \nu_B(b e^{1[0]})E^2 \nu_B(b' E^{1[0]}) \ot E^{1[1]} e^{1[1]}\\
&=&e^2 \nu_B(b (E^2e^1)^{[0]}) \nu_B(b' E^{1[0]}) \ot E^{1[1]} (E^2e^1)^{[1]}\\
&=&e^2 \nu_B(b E^{2[0]}e^{1[0]}) \nu_B(b' E^{1[0]}) \ot E^{1[1]} E^{2[1]}e^{1[1]}\\
&=& e^2 \nu_B(b f^2e^{1[0]}) \nu_B(b' f^1) \ot e^{1[1]} \\
&=&e^2 \nu_B(b f^2b' e^{1[0]}) \nu_B(f^1) \ot e^{1[1]}\\
&=&e^2 \nu_B(bb' e^{1[0]})  \ot e^{1[1]}\\
&=& \rho'^{AB}(bb'),
\end{eqnarray*}
where we build the product in $A\ot \Aa_{BA}^{op}$ in line 1, we use the Casimir property of $e^1\ot e^2$ in line 2, the first equation of \equref{comeas1} in line 3, \equref{comeasfrob2} in line 4, the Casimir property of $f^1\ot f^2$ in line 5 and finally $\nu_B(f^1)f^2=1_B$ in line 6. For the units we have
\begin{eqnarray*}
\rho'^{AB}(1_B)=e^2\ot \nu_B(e^{1[0]}) e^{1[1]}= \nu_B(e^1)e^2\ot  1_{Q}= 1_A\ot 1_{Q},
\end{eqnarray*}
using \equref{comeas2} for $a=e^1$. We show that $\rho'^{AB}$ satisfies \equref{comeas1} for $b\in B$ by 
\begin{eqnarray*}
&&(A\ot A\ot \mu_{Q})(A\ot \sigma \ot Q)(\rho'^{AB}\ot \rho'^{AB})d_B(b)\\
&=&e^2\nu_B(f^1 e^{1[0]})\ot E^2 \nu_B(f^2 b E^{1[0]}) \ot E^{1[1]} e^{1[1]}\\
&=&e^2\nu_B(b E^{1[0]} f^1 e^{1[0]})\ot E^2 \nu_B(f^2 ) \ot E^{1[1]} e^{1[1]}\\
&=&e^2\nu_B(b E^{1[0]} e^{1[0]}) \ot E^2 \ot E^{1[1]} e^{1[1]}\\
&=&e^2\nu_B(b(E^1e^1)^{[0]})\ot E^2  \ot (E^1e^1)^{[1]}\\
&=& \nu_B(be^{1[0]}) e^2E^1 \ot E^2 \ot e^{1[1]}\\
&=&(\Delta_A\ot Q) \rho'^{AB}(b),
\end{eqnarray*}
where we used that multiplication $\mu_Q$ in $Q$ is the opposite of the multiplication in $\Aa_{BA}$ and the characterisation of the comultiplication in $B$ by the Casimir element $f^1\ot f^2$ in line 2, the Casimir property for $f^1\ot f^2$ in line 3, $\nu_B(f^2)f^1=1_B$ in line 4, the first equation in \equref{comeas1} in line 5, Casimir property for $e^1\ot e^2$ in line 6 and finally the characterisation of the comultiplication in $A$ by the Casimir element $E^1\ot E^2$ in line 7. For the counits we show 
\begin{eqnarray*}
(\nu_A\ot Q) \rho'^{AB} (b)&=& \nu_B(b e^{1[0]}) e^{1[1]}\nu_A(e^2)=\nu_B(b e^{1[0]}) \nu_A(e^{2[0]})e^{1[1]}e^{2[1]}\\
&=&\nu_B(b f^1)\nu_B(f^2)1_{Q}=\nu_B(b)1_{Q},
\end{eqnarray*}
using \equref{comeas2} for $a=e^2$ and \equref{comeasfrob2}. Hence, $\rho'^{AB}$ is a comeasuring of Frobenius algebras and therefore by universal property there exists an algebra homomorphism $S'_{AB}: \Aa_{AB}\to \Aa_{BA}^{op}$, such that 
\[
\xymatrix{ B \ar[r]^-{\rho'^{AB}} \ar[dr] _{\rho^{AB}} & A\ot Q  \\
           &  A \ot \Aa_{AB}\ar@{.>}[u]_{\exists! A \ot S'_{AB}}
} 
\]
commutes.
Finally $S'_{AB}$ is inverse to $S_{BA}$, since for all $b\in B$ 
\begin{eqnarray*}
(A \ot S_{BA}S'_{AB})\rho^{AB}(b)&=&(A \ot S_{BA}) \rho'^{AB}(b)\\
&=&(A \ot S_{BA})(e^2 \nu_B(be^{1[0]} \ot e^{1[1]})\\
&=&e^2 \nu_B(be^{1[0]}) \ot S_{BA}(e^{1[1]})\\
&=&e^2 \nu_B(b\nu_A(f^{2[0]}e^1) f^1)) \ot f^{2[1]}\\
&=&\nu_A(e^1) e^2 f^{2[0]}\nu_B(b f^1) \ot f^{2[1]}\\
&=&\nu_B(bf^1) \rho^{AB}(f^2)=\nu_B(b^{(1)}) \rho^{AB}(b^{(2)})\\
&=&\rho^{AB}(b),
\end{eqnarray*}
where we used $(A\ot S'_{AB})\rho^{AB}= \rho'^{AB}$ in line 1 and $(A\ot S_{BA})\rho^{BA}= \bar \rho^{BA}$ in line 4, furthermore the Casimir property for $e^1\ot e^2$ in line 5, $\nu_A(e^1)e^2=1_A$ and the characterisation of the comultiplication in $B$ by the Casimir element $f^1\ot f^2$ in line 6 and finally counitality in $B$.
Conversely, for all $a\in A$ 
\begin{eqnarray*}
(B\ot S'_{AB}S_{BA})\rho^{BA}(a)&=&(B \ot S'_{AB})\bar \rho^{BA}(a)\\
&=&(B \ot S'_{BA})(f^1 \nu_A(f^{2[0]}a) \ot f^{2[1]}\\
&=&f^1 \nu_A(f^{2[0]}a) \ot S'_{BA}(f^{2[1]})\\
&=&f^1 \nu_A(\nu_B(f^2 e^{1[0]}) e^2a) \ot e^{1[1]}\\
&=&\nu_B(f^2) e^{1[0]} f^1 \nu_A(e^2a) \ot e^{1[1]}\\
&=&\nu_A(e^2a) \rho^{BA}(e^1)=\nu_A(a^{(2)}) \rho^{BA}(a^{(1)})\\
&=&\rho^{BA}(a),
\end{eqnarray*}
where we used $(B\ot S'_{BA})\rho^{AB}= \rho'^{AB}$ and $(A\ot S_{BA})\rho^{BA}= \bar \rho^{BA}$, furthermore the Casimir property for $f^1\ot f^2$ in line 5, $\nu_A(e^1)e^2=1_A$ in line 6. Hence by universal property of $\Aa_{AB}$ we find that $S_{BA}S'_{AB}=\Aa_{AB}$ and similarly $S'_{AB}S_{BA}=\Aa_{BA}$. Hence $S$ is bijective. 
\end{proof}

\section{Dualities}\label{se:dualities}

\subsection{Sweedler dual of a semi-Hopf opcategory}

Let $A$ be an algebra. Recall that the Sweedler dual $A^\circ$ of $A$ is the set of all linear functionals $f\in A^*$ such that $\ker f$ contains a two-sided ideal of finite codimension. Then $A^\circ$ is a coalgebra in a natural way, where
$$\Delta(f)=f^{(1)}\ot f^{(2)}$$ if and only if 
$$f(ab)=f^{(1)}(a)f^{(2)}(b)$$
for all $a,b\in A$. Moreover $(-)^\circ :\Alg \to \Coalg$ is a contravariant functor which is adjoint to the contravariant functor $(-)^*:\Coalg\to \Alg$, that sends a coalgebra $C$ to the convolution algebra $C^*$.
Then we have the following immediate result.

\begin{lemma}\label{le:dualHcat}
Let $\ul A$ be a semi-Hopf opcategory. Then there is a semi-Hopf category $\ul A^\circ$ defined as follows: objects of $\ul A^\circ$ are the same as objects of $\ul A$, and for each pair of objects $x,y \in A^0 = (A^\circ)^0$, we have 
$$(A^\circ)_{xy}=(A_{xy})^\circ$$
Moreover, if $\ul A$ is a Hopf opcategory, then $\ul A^\circ$ is a Hopf category.
\end{lemma}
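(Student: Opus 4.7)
The plan is to apply the contravariant functor $(-)^\circ:\Alg\to \Coalg$ componentwise to all the structure of the semi-Hopf opcategory $\ul A$. The crucial preliminary fact, going back to Sweedler, is that over a field this functor is monoidal in the contravariant sense: there is a natural coalgebra isomorphism $(A\ot B)^\circ \cong A^\circ \ot B^\circ$, and, of course, $(-)^\circ$ carries algebra morphisms to coalgebra morphisms.

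Using this, I would set $(A^\circ)_{xy}:=(A_{xy})^\circ$ with coalgebra structure $\delta_{xy}:=\mu_{xy}^\circ$ and $\epsilon_{xy}:=\eta_{xy}^\circ$. Since the axioms of a semi-Hopf opcategory require $d_{xyz}:A_{xz}\to A_{xy}\ot A_{yz}$ and $e_x:A_{xx}\to k$ to be \emph{algebra} morphisms, their Sweedler duals
\[
 m_{xyz}:= d_{xyz}^\circ : (A^\circ)_{xy}\ot (A^\circ)_{yz} \to (A^\circ)_{xz}, \qquad j_x := e_x^\circ : k\to (A^\circ)_{xx}
\]
(after using the monoidality iso to identify $(A_{xy}\ot A_{yz})^\circ$ with $(A_{xy})^\circ\ot(A_{yz})^\circ$) are automatically \emph{coalgebra} morphisms. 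This is exactly the compatibility between composition and the local coalgebra structure demanded in a semi-Hopf category, so the most delicate axiom is in fact built in for free.

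Next I would verify the remaining axioms for $\ul A^\circ$ by applying $(-)^\circ$ diagram by diagram. Associativity of $m$ and the unit law for $j$ follow from coassociativity of $d$ and counitality of $e$; coassociativity and counitality of $\delta_{xy}$ follow from associativity and unitality of $\mu_{xy}$. This yields the semi-Hopf category structure on $\ul A^\circ$. For the Hopf case, let $s_{xy}:A_{xy}\to A_{yx}$ be the antipode of $\ul A$. I would define the antipode of $\ul A^\circ$ at $(x,y)$ to be $(s_{yx})^\circ:(A^\circ)_{xy}\to (A^\circ)_{yx}$, and derive the two antipode equations for $\ul A^\circ$ by applying $(-)^\circ$ to the corresponding equations for $\ul A$, rewriting tensor products through the monoidality iso.

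The only nontrivial ingredient is thus the monoidality isomorphism $(A\ot B)^\circ\cong A^\circ\ot B^\circ$ over a field; once that is invoked, every remaining step is a formal reversal of arrows in a commutative diagram under a contravariant functor. A small subtlety worth noting, explicitly flagged in the paper after the definition of semi-Hopf opcategory, is that a semi-Hopf opcategory is \emph{not} a semi-Hopf $\Vv^{op}$-category at the level of morphisms between such structures; but since this lemma concerns only a single object of $\Vv$-$\opsHopf$, that distinction does not intervene here.
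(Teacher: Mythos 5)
Your proposal is correct and follows essentially the same route as the paper: both dualize the opcategory structure maps $d_{xyz}$, $e_x$ (and the antipode) through the Sweedler-dual functor to obtain the composition, unit and antipode of $\ul A^\circ$, with the local coalgebra structures being the usual Sweedler duals. The only difference is packaging: where you invoke the monoidality isomorphism $(A\ot B)^\circ\cong A^\circ\ot B^\circ$, the paper directly verifies the one fact actually needed --- that $(f\ot g)\circ d_{xyz}$ vanishes on an ideal of finite codimension, i.e.\ the easy lax direction $A^\circ\ot B^\circ\to (A\ot B)^\circ$ --- so your appeal to the full isomorphism is more than required but perfectly valid over a field.
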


\begin{proof}
We already know that $A^\circ_{xy}$ is a coalgebra for each pair of objects $x,y$. Let us show that it also is a $k$-linear category. To this end, take any triple of objects $x,y,z$ and define 
$$m_{xyz}: A^\circ_{xy}\ot A^\circ_{yz}\to A^\circ_{xz}$$ 
as follows. For $f\in A^\circ_{xy}$ and $g\in A^\circ_{yz}$, we have a functional $(f\ot g)\circ d_{xyz}\in A^*_{xz}$. Now let $I$ and $J$ be ideals of finite codimension respectively in $\ker f$ and $\ker g$. Let $K\subset A_{xz}$ be the inverse image of $I\ot A_{yz}+ A_{xy}\ot J$ under $d_{xyz}$. Then $K$ is an ideal of finite codimension contained in the kernel of $(f\ot g)\circ d_{xyz}$, hence the latter is in $A^\circ_{xz}$, which shows that $m_{xyz}$ is well-defined by $m_{xyz}(f\ot g)=(f\ot g)\circ d_{xyz}$. Furthermore, $m_{xyz}$ is a coalgebra map as $d_{xyz}$ is an algebra map. Finally, we define $j_{x}(1_k)=e_x$, which clearly is in $A^\circ_{xx}$ being an algebra map onto $k$.

If $S$ is an antipode for $\ul A$ then their dual morphisms form an antipode for $\ul A^\circ$.
\end{proof}

\begin{remark}
The construction of Lemma~\ref{le:dualHcat} is not functorial. Indeed, if $\ul f:\ul A\to \ul B$ is a morphism of semi-Hopf opcategories, then $\ul f$ sends objects of $A$ to objects of $B$, and for each pair of objects $x,y\in A^\circ$, we have that $f_{xy}:A_{xy}\to B_{fxfy}$ is an algebra morphism (satisfying coherence). When we turn to Sweedler duals, we find that each $f_{xy}$ induces a coalgebra morphism $f^*_{xy}: B_{fxfy}^\circ \to A_{xy}^\circ$, however on the level of objects, we still go from objects in $\ul A$ (or $\ul A^\circ$) to objects in $\ul B$ (or $\ul B^\circ$).
\end{remark}

We will now combine the above result with \cite[Thm. 3.20, Thm. 4.14]{AGV}, which we state here in the finite dimensional case (which is sufficient for our needs, since Frobenius algebras are finite dimensional) and reformulate in terms of semi-Hopf categories.

\begin{theorem} \label{meascomeas}
\begin{enumerate}
\item
Let $A$ and $B$ be finite dimensional $\Omega$-algebras and $(\Aa_{BA},\rho^{BA})$ their universal comeasring algebra. Let $\tilde \psi_{AB}: \Aa^{\circ}_{BA} \ot A \to B$ the measuring defined by 
$$\tilde{\psi}_{BA}(f \ot a):=(B \ot f) \rho^{BA}(a)=f\left(a_{(1)}\right) a_{(0)},\qquad  \forall f \in \Aa_{BA}^{\circ}, a \in A.$$ Then there exist a coalgebra isomorphism $\theta_{BA}: \Aa_{BA}^{\circ} \to \Cc_{BA}$ such that
\[
\xymatrix{\Cc_{BA} \ot A \ar[r]^-{\psi_{BA}} & B\\
\Aa_{BA}^\circ \ot A \ar@{..>}[u]^{\exists \theta_{BA}\ot A} \ar[ur]_{\tilde \psi_{BA}} & 
}
\]
commutes. 
\item
If $\Xx$ a class of finite dimensional $\Omega$-algebras, $\Cc(\Xx)$ the semi-Hopf category of universal measuring coalgebras as constructed in Proposition~\ref{prop:sHcatOmega} and $\Aa(\Xx)$ the semi-Hopf opcategory of universal comeasuring algebras as constructed in Proposition~\ref{prop:sHopcatOmega}, then we have an isomorphism of semi-Hopf opcategories
$$\theta:\xymatrix{ \Aa(\Xx)^\circ \ar[rr]^-\cong  && \Cc(\Xx)}$$
where $\Aa(\Xx)^\circ$ is the Sweedler dual of $\Aa(\Xx)$ as in Lemma~\ref{le:dualHcat}, being the identity on objects. 
\item 
If $\Xx$ is a class of Frobenius algebras, then the isomorphism $\theta:\xymatrix{ \Aa(\Xx)^\circ \ar[r]^-\cong & \Cc(\Xx)}$ as in part (2) is an isomorphism of Hopf categories. In particular, $\theta_{AA}: \Aa_{AA}^{\circ} \to \Cc_{AA}$ is a Hopf algebra isomorphism. 
\end{enumerate}
\end{theorem}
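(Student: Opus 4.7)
The plan is to attack the three parts in order, doing the genuinely new work in parts (2) and (3).

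For part (1), I would first check directly that $\tilde\psi_{BA}:\Aa_{BA}^\circ \ot A \to B$, $\tilde\psi_{BA}(f\ot a)=f(a^{[1]})a^{[0]}$, is a measuring of $\Omega$-algebras. The coalgebra structure on $\Aa_{BA}^\circ$ is characterised by $\Delta(f)=f^{(1)}\ot f^{(2)}$ iff $f(pq)=f^{(1)}(p)f^{(2)}(q)$, so each axiom of the form ``$f(\omega_A(a))=\omega_B(\tilde\psi(f\ot a))$'' translates, via the pairing $\langle f,q\rangle=f(q)$, into exactly the corresponding comeasuring axiom satisfied by $\rho^{BA}$. By the universal property of $\Cc_{BA}$ we obtain a unique coalgebra morphism $\theta_{BA}:\Aa_{BA}^\circ\to \Cc_{BA}$ with $\psi_{BA}\circ(\theta_{BA}\ot A)=\tilde\psi_{BA}$.

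The main obstacle is showing that $\theta_{BA}$ is a bijection. I would construct its inverse via the universal property of $\Aa_{BA}$. Because $A$ and $B$ are finite dimensional, any measuring $(P,\psi)$ from $A$ to $B$ dualises to a comeasuring $\hat\rho:A\to B\ot P^*$ defined, after fixing a basis $\{b_i\}$ of $B$ with dual basis $\{b_i^*\}$, by $\hat\rho(a)=\sum_i b_i\ot f_{i,a}$, where $f_{i,a}(p):=b_i^*(\psi(p\ot a))$; one checks that the comeasuring axioms for $\hat\rho$ are equivalent to the measuring axioms for $\psi$. Applied to the universal measuring $(\Cc_{BA},\psi_{BA})$, this yields an algebra morphism $\Aa_{BA}\to \Cc_{BA}^\ast$; its transpose restricts to a coalgebra morphism $\Cc_{BA}\to \Aa_{BA}^\circ$, which is inverse to $\theta_{BA}$ by a straightforward chase through the two universal properties. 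This is essentially \cite[Thm.~3.20, Thm.~4.14]{AGV}, which I would cite for the bookkeeping.

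For part (2), each $\theta_{BA}$ is already a coalgebra isomorphism, so only compatibility with the composition morphisms and the units needs to be verified. Given a triple $A,B,C\in \Xx$, both $\theta_{AC}\circ m^{\Aa^\circ}_{ABC}$ and $m^{\Cc}_{ABC}\circ(\theta_{AB}\ot \theta_{BC})$ are coalgebra morphisms $\Aa_{AB}^\circ\ot \Aa_{BC}^\circ \to \Cc_{AC}$. By Lemma~\ref{uniquemorphismuniversal}, equality reduces to equality of the two induced measurings $\Aa_{AB}^\circ\ot \Aa_{BC}^\circ \ot C\to A$. Unwinding via the commutative triangle of (1), the left-hand side becomes $\tilde\psi_{AC}\circ(m^{\Aa^\circ}_{ABC}\ot C)$, which by the definition of $m^{\Aa^\circ}$ (Lemma~\ref{le:dualHcat}) and the comodule-algebra compatibility $(A\ot d_{ABC})\circ\rho^{AC}=(\rho^{AB}\ot \Aa_{BC})\circ \rho^{BC}$ built into the semi-Hopf opcategory structure, coincides with the iterated measuring $\tilde\psi_{AB}\circ(\Aa_{AB}^\circ\ot \tilde\psi_{BC})$. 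The right-hand side equals the same thing by the definition of $m^{\Cc}_{ABC}$ as composition of universal measurings. Compatibility with units is the trivial observation that $\theta_{xx}\circ j^{\Aa^\circ}_x$ and $j^{\Cc}_x$ both correspond, via the universal property of $\Cc_{xx}$, to the measuring $k\ot x\cong x$. Thus $\theta$ is an isomorphism of semi-Hopf categories.

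For part (3), when $\Xx$ consists of Frobenius algebras, Theorems~\ref{univact} and \ref{univcoact} supply antipodes $S^{\Cc}$ and $S^{\Aa}$ on $\Cc(\Xx)$ and $\Aa(\Xx)$ respectively; by Lemma~\ref{le:dualHcat} the Sweedler dual of $S^{\Aa}$ provides an antipode on $\Aa(\Xx)^\circ$. As recalled after Definition~\ref{defHcat}, a morphism of semi-Hopf categories between two Hopf categories automatically intertwines the antipodes (equivalently, the antipode is unique when it exists, by a convolution-inverse argument). Applying this to the semi-Hopf category isomorphism $\theta$ of part~(2) shows that $\theta$ is an isomorphism of Hopf categories, and specialising to $A=B$ gives the Hopf algebra isomorphism $\theta_{AA}:\Aa_{AA}^\circ\to \Cc_{AA}$.
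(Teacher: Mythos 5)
Your proposal is correct and follows essentially the same route as the paper: part (1) is delegated to \cite[Thm.~3.20, Thm.~4.14]{AGV}, part (2) is proved by reducing the compatibility of $\theta$ with composition and units to an equality of measurings via Lemma~\ref{uniquemorphismuniversal} and the defining identity $(\rho^{AB}\ot\Aa_{BC})\rho^{BC}=(d_{ABC}\ot A)\rho^{AC}$, and part (3) follows from the automatic preservation of antipodes by semi-Hopf category morphisms between Hopf categories. The extra detail you supply on constructing the inverse of $\theta_{BA}$ in part (1) is a harmless elaboration of what the cited reference provides.
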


\begin{proof}
Part (1) follows directly from \cite[Thm. 3.20., Thm. 4.14]{AGV}. 

By Lemma~\ref{le:dualHcat}, we know that $\Aa(\Xx)^{\circ}$ is indeed a semi-Hopf category, and Part (1) defines a family of coalgebra morphisms $\theta_{AB}$. 
We prove now the compatibility with the multiplicative structure. Take $A,B,C$ in $\Xx$, $f \in \Aa_{AB}^\circ$, $g\in \Aa_{BC}^\circ$ and $c\in C$. Then we find 
\begin{eqnarray*}
\psi_{AC}(\theta_{AC} m^{\Aa^\circ}_{ABC}  \ot C)(f\ot g\ot c)&=&\tilde \psi_{AC} (m^{\Aa^\circ}_{ABC} (f\ot g) \ot c)\\
&=&(m^{\Aa^\circ}_{ABC} (f\ot g)\ot A)\rho^{AC}(c)\\
&=&(f\ot g \ot A)(d_{ABC} \ot A)\rho^{AC} (c)
\end{eqnarray*}
and
\begin{eqnarray*}
\psi_{AC}(m^\Cc_{ABC}\ot C) (\theta_{AB}\ot \theta_{BC}\ot C)(f\ot g\ot c)
&=&\psi_{AB}(\Cc_{AB} \ot \psi_{BC})(\theta_{AB}(f)\ot \theta_{BC}(g)\ot c)\\
&=& \tilde \psi_{AB} (\Aa_{AB}^\circ \ot \tilde \psi_{BC})(f\ot g\ot c)\\ 
&=&(f\ot g \ot A)(\rho^ {AB} \ot \Aa_{BC})\rho^{BC} (c).
\end{eqnarray*}
By definition of the opcategory structure of $\Aa$, we have that 
$$(\rho^ {AB} \ot \Aa_{BC})\rho^{BC}=(d_{ABC} \ot A)\rho^{AC}$$
and therefore the above computations imply that
$$\psi_{AC}(\theta_{AC} m^{\Aa^\circ}_{ABC}  \ot \Cc)=\psi_{AC}(m^\Cc_{ABC}\ot C) (\theta_{AB}\ot \theta_{BC}\ot C).$$
The universal property of the comeasuring $(\Cc_{AC},\psi_{AC})$ (see Lemma~\ref{uniquemorphismuniversal}) then implies that indeed
$$\theta_{AC} m^{\Aa^\circ}_{ABC}=m^\Cc_{ABC}\circ  (\theta_{AB}\ot \theta_{BC}).$$
In a similar way, one verifies that $\theta$ is compatible with the units. Indeed, by definition of the $\Vv$-category structure of $\Aa^\circ$ and the opcategory structure of $\Aa$ we find 
$$\psi_{AA}(\theta_{AA} j_A^{\Aa^\circ} \ot A)= \tilde \psi_{AA}(j_A^{\Aa^\circ} \ot A) )= \tilde\psi_{AA}(e_A\ot A) = (A\ot e_A)\circ \rho^{AA}=A.$$
On the other hand, the definition of the $\Vv$-category structure of $\Cc$ guarantees that
$$\psi_{AA}(j^\Cc_A\ot A)= A$$
Again by Lemma~\ref{uniquemorphismuniversal} we then find that $\theta_{AA} j_A^{\Aa^\circ}=j^\Cc_A$.

Part (3) follows directly from part (2) and the fact that a semi-Hopf category morphism between Hopf categories is automatically preserving the antipodes.
\end{proof}

\subsection{Duality for Frobenius algebras and the antipode of their universal Hopf category}\selabel{duality2}

Recall that a Frobenius algebra $A$ is always finite dimensional and a dual base is given by $e^1\otimes \nu(e^2-)\in A\otimes A^*$ (where as usual, we denote by $e^1\ot e^2$ the Casimir element of $A$ and by $\nu$ its faithful functional or counit).  
Consequently, $A^*$ is again a Frobenius algebra whose structure is given by 
\begin{eqnarray*}
(f\cdot g)(a):=f(a^{(2)})g(a^{(1)})=f(e^2a)g(e^1)=f(e^2)g(ae^1), & 1_{A^*}:=\nu_A,\\
\Delta (f)(a\ot b):= f^{(1)}(a)f^{(2)}(b)=f(ba), & \nu_{A^*}(f):= f(1),
\end{eqnarray*}
for all $f,g \in A^*, a,b\in A$. Remark that the Casimir element of $A^*$ is given by
$$\nu(e^1-)\ot \nu(e^2-)\in A^*\ot A^*.$$
Moreover, the maps $\iota: A\to A^*, \iota(a)(b)=\nu(ab))$ and $\iota^{-1}: A^*\to A, f\mapsto f(e^1)e^2$ are mutual inverse isomorphisms of Frobenius algebras.

\begin{lemma}\label{MeasIso}
Let $A,B$ be Frobenius algebras, with duals $A^*,B^*$. Then $\Meas(A,B)$ and $\Meas(A^*,B^*)$ are isomorphic as categories. Dually $\Comeas(A,B)$ and $\Comeas(A^*,B^*)$ are isomorphic as categories.
\end{lemma}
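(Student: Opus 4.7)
The plan is to use the just-recalled Frobenius algebra isomorphisms $\iota_A:A\to A^*$ and $\iota_B:B\to B^*$ to transport (co)measurings: an isomorphism of Frobenius algebras should clearly induce an isomorphism of the associated (co)measuring categories, and the lemma will then follow directly.

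More concretely, I define a functor $F:\Meas(A,B)\to \Meas(A^*,B^*)$ on objects by $F(P,\psi)=(P,\psi^\flat)$, where
$$\psi^\flat:=\iota_B\circ\psi\circ(P\ot\iota_A^{-1}):P\ot A^*\to B^*,$$
and on morphisms $f:(P,\psi)\to(P',\psi')$ (which by definition are coalgebra maps $f:P\to P'$ satisfying $\psi=\psi'\circ(f\ot A)$) by sending $f$ to the same underlying coalgebra map, now viewed as a morphism in $\Meas(A^*,B^*)$. The compatibility $\psi^\flat=(\psi')^\flat\circ(f\ot A^*)$ is then immediate. An inverse functor $G:\Meas(A^*,B^*)\to\Meas(A,B)$ is defined symmetrically via $\iota_A$ and $\iota_B^{-1}$, and by construction $F,G$ are mutually inverse.

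The only substantive step is to verify that $\psi^\flat$ is indeed a measuring of Frobenius algebras from $A^*$ to $B^*$. I would argue via the following transport-of-structure principle: whenever $\alpha:A'\to A$ and $\beta:B\to B'$ are morphisms of Frobenius algebras and $(P,\psi)$ is a measuring of Frobenius algebras from $A$ to $B$, the composite $\beta\circ\psi\circ(P\ot\alpha)$ is again a measuring from $A'$ to $B'$. This is immediate from \deref{measuring}, since each of the conditions \equref{measalg}--\equref{meascoalg} is preserved by pre- and post-composition with Frobenius (hence $\Omega$-algebra) morphisms on the arguments. Applying this to $\alpha=\iota_A^{-1}$ and $\beta=\iota_B$ yields $\psi^\flat\in\Meas(A^*,B^*)$.

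The comeasuring case is entirely analogous. I would set $\rho^\flat:=(\iota_B\ot Q)\circ\rho\circ\iota_A^{-1}:A^*\to B^*\ot Q$ for any comeasuring $(Q,\rho)$ from $A$ to $B$, leaving the algebra $Q$ and any algebra morphism $Q\to Q'$ unchanged, and verify by the same transport-of-structure argument that the comeasuring axioms \equref{comeasalg}--\equref{comeas2} are preserved. The main (mild) obstacle is essentially bookkeeping: one must articulate the transport principle carefully enough to cover both the measuring and comeasuring axioms. Once this is done, the lemma becomes an immediate consequence of the fact, already recalled just before the statement, that $\iota_A$ and $\iota_B$ are isomorphisms of Frobenius algebras.
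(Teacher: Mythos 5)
Your proposal is correct and follows essentially the same route as the paper: you define the same functors $\psi\mapsto\iota_B\circ\psi\circ(P\ot\iota_A^{-1})$ and $\rho\mapsto(\iota_B\ot Q)\circ\rho\circ\iota_A^{-1}$ with the same inverse construction, and your transport-of-structure principle (which indeed holds, and also follows from Lemma~\ref{le:meascomposition}(1)--(3)) is just a compact packaging of the explicit verifications the paper carries out for $\alpha=\iota_A^{-1}$, $\beta=\iota_B$.
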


\begin{proof}
Given a measuring $\psi: P\ot A \to B$ we can construct a measuring $\psi^*: P \ot  A^*\to B^*$ by 
$$\psi^{*}(p\ot f):= \iota_B(\psi(p \ot \iota_A^{-1}(f))).$$
Since this is a composition of three coalgebra morphisms, it is a coalgebra morphism itself. Moreover, $\psi^*$ is a measuring of algebras since
\begin{eqnarray*}
\psi^*(p\ot fg)&=&\iota_B(\psi(p\ot \iota^{-1}_A(f)\iota^{-1}_A(g))=\iota_B(p^{(1)}(\iota^{-1}_A(f))p^{(2)}(\iota^{-1}_A(g)))\\
&=&\iota_B(p^{(1)}(\iota^{-1}_A(f)))\iota_B(p^{(2)}(\iota^{-1}_A(g)))=\psi^*(p^{(1)}\ot f)\psi^*(p^{(2)}\ot g)
\end{eqnarray*}
and 
$$\psi^{*}(p\ot1_{A^*})=\iota_B(\psi(p \ot 1_A))=\epsilon(p)\iota_B(1_B) =\epsilon(p) 1_{B^*}.$$
Furthermore, let $\psi': P'\ot A\to B$ be another measuring and $\phi:P\to P'$ a coalgebra morphism such that $\psi=\psi'(\phi\ot A)$ then 
$$\psi'^{*}(\phi \ot A)= \iota_B \psi' (\phi\ot \iota_A^{-1})=\iota_B \psi (P\ot \iota_A^{-1})=\psi^*.$$ 
The assignment $(P,\psi)\mapsto (P,\psi^*)$ and $\phi\mapsto \phi$ forms a functor $\Meas(A,B)\to \Meas(A^*,B^*)$. 

Conversely, given a measuring $\psi^*: P \ot  A^*\to B^*$ we can construct a measuring $\psi: P\ot A\to B$ by $\psi(p\ot a):=\iota^{-1}_B(\psi^*(p\ot \iota_A(a)))$. We can again lift this to a functor $\Meas(A^*,B^*) \to \Meas(A,B)$ by setting it to be the identity on morphisms. These two functors are inverse to each other, hence $\Meas(A,B)$ and $\Meas(A^*,B^*)$ are isomorphic as categories. 

Similarly any comeasuring $\rho:A\to B\ot Q$ gives rise to a comeasuring $\rho^*:=(\iota_B\ot Q)\rho\iota^{-1}_A: A^*\to B^* \ot Q$ and the assignment is bijective and functorial.
\end{proof}

Hence we obtain the following theorem.

\begin{proposition}
\begin{enumerate}[(1)]
\item
Let $A,B$ be Frobenius algebras and $A^*,B^*$ their dual Frobenius algebras. Then there is a unique coalgebra isomorphism
$$\gamma_{AB}: \xymatrix{\Cc_{AB} \ar[rr]^-\cong && \Cc_{A^*B^*}},$$
rendering the following diagram commutative 
\[
\xymatrix{
\Cc_{AB}\ot B \ar[d]_-{\psi_{AB}} \ar[rr]^-{\Cc_{AB}\ot\iota_B} && \Cc_{AB}\ot B^* \ar[rr]^-{\gamma_{AB}\ot B^*} 
&& \Cc_{A^*B^*}\ot B^* \ar[d]^-{\psi_{A^*B^*}} \\
A \ar[rrrr]^-{\iota_A} &&&& A^*.
}
\]
\item
Let then $\Xx$ be any class of Frobenius algebras and denote by $\Xx^*$ the class their dual Frobenius algebras. Then the isomorphisms $\gamma_{AB}$ as defined in part (1), for all $A,B\in \Xx$, form an isomorphism of Hopf categories
$$\gamma:\Cc(\Xx)\to \Cc(\Xx^*),$$
sending objects $A$ to their duals $A^*$.
In particular,
$$\gamma_{AA}: \Cc_{AA} \simeq \Cc_{A^*A^*}$$
is a Hopf algebra isomorphism for any Frobenius algebra $A$.
\end{enumerate}
\end{proposition}

\begin{proof}
Since the categories $\Meas(A,B)$ and $\Meas(A^*,B^*)$ are isomorphic by Lemma~\ref{MeasIso}, so are their final objects. This induces the coalgebra isomorphism $\gamma_{AB}$ as in part (1). For part (2), we only show the compatibility with the multiplication and unit. We can compute
 \begin{eqnarray*}
&&\psi_{A^*C^*}( m_{A^*B^*C^*} \ot C^*)(\gamma_{AB}\ot \gamma_{BC}\ot C^*)\\
&=&\psi_{A^*B^*} (\Cc_{A^*B^*}\ot \psi_{B^*C^*}) (\gamma_{AB}\ot \gamma_{BC}\\
&=&\psi^*_{A^*B^*} (\Cc_{AB}\ot \psi^*_{B^*C^*})\\
&=&\iota_A\psi_{AB} (\Cc_{AB}\ot \iota^{-1}_B\iota_B)(\Cc_{AB}\ot \psi_{BC})(\Cc_{AB}\ot \Cc_{BC}\ot \iota^{-1}_C)\\
&=&\iota_A\psi_{AC} (m_{ABC}\ot C)(\Cc_{AB}\ot \Cc_{BC}\ot \iota^{-1}_C)\\
&=&\psi^*_{A^*C^*}(m_{ABC} \ot C^*)=\psi_{A^*C^*}(\gamma_{AC} m_{ABC} \ot C^*),
\end{eqnarray*}
and
 \begin{eqnarray*}
\psi_{A^*A^*}(\gamma_{AA}j_A\ot A^*)=\psi^*_{A^*A^*}(j_A\ot A^*)=\iota_A \psi_{AA} (j_{A}\ot \iota_A^{-1})= \iota_A\iota_A^{-1}=A^*=\psi_{A^*A^*}(j_{A^*}\ot A^*)
\end{eqnarray*}
 by universal property of $\Cc_{A^*C^*}$ and $\Cc_{A^*A^*}$ we find that $\gamma$ is a morphism of semi-Hopf categories.
\end{proof}

Recall from \cite[Thm. 4.16.]{AGV} the existence of an ismorphism between the opposite universal acting bialgebra of any $\Omega$-algebra and the universal acting bialgebra of its dual. One can show a similar duality for general measurings. We formulate this result here in the case of Frobenius algebras. 

\begin{proposition}\label{isomorphismpi}
Let $A,B$ be Frobenius algebras and $A^*, B^*$ their duals. Denote their universal measuring by $\psi_{B^*A^*}: \Cc_{B^*A^*} \ot A^* \to B^*$. Define $\hat \psi_{AB}: \Cc_{B^*A^*}^{cop} \ot B\to A$ by 
$$\hat \psi_{AB}(p \ot b):= \sum_i \psi_{B^*A^*}(p\ot f_i)(b) e_i,\qquad \forall p \in \Cc_{B^*A^*}, b \in B,$$
where $\{(e_i,f_i)\}\subset A\times A^*$ is a finite dual base for $A$. 
Then this induces a unique coalgebra isomorphism $\pi_{AB}: \Cc_{B^*A^*}\to \Cc_{AB}^{cop} $ such that
\begin{equation}\eqlabel{equationduality}
\xymatrix{ \Cc_{B^*A^*} \ot B \ar[r]^-{\hat \psi_{AB}} \ar@{.>}[d]_{\pi_{AB} \ot B} & A \\
            \Cc_{AB} \ot B \ar[ur] _{\psi_{AB}} & }
\end{equation}            
commutes. Furthermore, for Frobenius algebras $A,B,C$ with duals $A^*,B^*,C^*$ we have
$$\pi_{CA}m_{A^*B^*C^*}=m_{CBA}\sigma(\pi_{BA}\ot \pi_{CB})$$
and $\pi_{AA}j_{A^*}=j_A.$

In other words, if $\Xx$ is a class of Frobenius algebras, then there is an isomorphism of Hopf categories $\pi:\Cc(\Xx^*)\to \Cc(\Xx)^{op,cop}$ defined component-wise as above.
In particular,
$$\pi_{AA}: \Cc_{A^*A^*}\simeq \Cc_{A,A}^{cop,op}$$
is a Hopf algebra isomorphism. 
\end{proposition}

\begin{proof}
To check the conditions of a measuring, let $b,c\in B, p\in \Cc_{B^*A^*}$ and $f\in A^*$. Then we find
\begin{eqnarray*}
f(\hat \psi_{AB}(p \ot bc))&=&\psi_{B^*A^*}(p\ot f)(bc)=\psi_{B^*A^*}(p \ot f)^{(1)}(c)\psi_{B^*A^*}(p\ot f)^{(2)}(b)\\
&=&\psi_{B^*A^*}(p^{(1)} \ot f^{(1)})(c)\psi_{B^*A^*}(p^{(2)}\ot f^{(2)})(b)\\
&=&f^{(1)}(\hat \psi_{AB}(p^{(1)} \ot c)) f^{(2)}(\hat \psi_{AB}(p^{(2)}\ot b))\\
&=&f(\hat \psi_{AB}(p^{(2)}\ot b) \hat \psi_{AB}(p^{(1)} \ot c)),
\end{eqnarray*}
where we used the definition of the comultiplication in $B^*$ in line 1, \equref{meascoalg} for $\psi_{B^*A^*}$ in line 2 and the definition of the comultiplication in $A^*$ in line 4. Moreover, 
\begin{eqnarray*}
f(\hat \psi_{AB}(p \ot 1_B))&=&\psi_{B^*A^*}(p\ot f)(1_B)=\nu_{B^*}(\psi_{B^*A^*}(p\ot f))\\
&=&\epsilon_{B^*A^*}(p)\nu_{A^*}(f)=\epsilon_{B^*A^*}(p)f(1_A),
\end{eqnarray*}
where we used the definition of the counit in $B^*$ in line 1, \equref{meascoalg} for $\psi_{B^*A^*}$ in line 2 and finally the definition of the counit in $A^*$. 
Since these equations hold for arbitrary $f\in A^*$, we find that $\hat \psi_{AB}$ is a measuring of algebras. To show that it is also a measuring of coalgebras let $b\in B, p\in \Cc(A^*,B^*), f,g\in A^*$. Then:
\begin{eqnarray*}
&&(f\ot g) \Delta_A (\hat \psi_{AB}(p \ot b))\\
&=&f(\hat \psi_{AB}(p \ot b)^{(1)})g(\hat \psi_{AB}(p \ot b)^{(2)})\\
&=&(gf)(\hat \psi_{AB}(p \ot b))=\psi_{B^*A^*}(p \ot gf)(b)\\
&=&(\psi_{B^*A^*}(p^{(1)} \ot g)\psi_{B^*A^*}(p^{(2)} \ot f))(b)\\
&=&(\psi_{B^*A^*}(p^{(2)} \ot f))(b^{(1)})(\psi_{B^*A^*}(p^{(1)} \ot g))(b^{(2)})\\
&=&f(\hat \psi_{AB}(p^{(2)} \ot b^{(1)}))g(\hat \psi_{AB}(p^{(1)} \ot b^{(2)}))\\
&=&(f\ot g)(\hat \psi_{AB}(p^{(2)} \ot b^{(1)})\ot \hat \psi_{AB}(p^{(1)} \ot b^{(2)})),
\end{eqnarray*}
where we used the definition of the multiplication in $A^*$ in line 2, \equref{measalg} for $\psi_{B^*A^*}$ in line 3 and the definition of the multiplication in $B^*$ in line 4. Again this holds for arbitrary $f,g\in A^*$ hence $\hat \psi_{AB}$ satisfies the first equation in \equref{meascoalg} and for the counit we have:
\begin{eqnarray*}
\nu_{A}(\hat \psi_{AB}(p \ot b))=\psi_{B^*A^*}(p \ot \nu_{A})(b)=\epsilon_{B^*A^*}(p)\nu_{B}(b),
\end{eqnarray*}
where we used \equref{measalg} for $\psi_{B^*A^*}$ and the fact that $\nu_A$ and $\nu_B$ are the units in $A^*$ and $B^*$ respectively. 
This shows that $\hat \psi_{AB}$ is a measuring of Frobenius algebras from $B$ to $A$ and hence there exists a unique coalgebra morphism $\pi_{AB}: \Cc_{B^*A^*}\to \Cc_{AB}^{cop}$, such that diagram \equref{equationduality} commutes. Now let $A,B,C$ be Frobenius algebras with duals $A^*,B^*,C^*$ and $a\in A, p\in \Cc_{A^*B^*}, q\in \Cc_{B^*C^*}, f\in C^*$.
Then:
\begin{eqnarray*}
f(\psi_{CA} (\pi_{CA} m_{A^*B^*C^*}\ot A)(p\ot q\ot a))&=&f(\hat \psi_{CA}(pq \ot a))\\
&=&\psi_{A^*C^*}(pq\ot f)(a)=\psi_{A^*B^*}(p \ot \psi_{B^*C^*}(q\ot f))(a)\\
&=& \psi_{B^*C^*}(q\ot f)(\hat\psi_{BA}(p \ot a))=f( \hat \psi_{CB}(q\ot \hat\psi_{BA}(p \ot a)))\\
&=&f(\psi_{CB}(\pi_{CB}(q)\ot \psi_{BA}(\pi_{BA}(p) \ot a)))\\
&=&f(\psi_{CA}(m_{CBA}\ot A) (\pi_{CB}(q)\ot \pi_{BA}(p) \ot a)\\
&=&f( \psi_{CA}(m_{CBA}\sigma \ot C)(\pi_{BA} \ot \pi_{CB} \ot C)(p\ot q \ot a)),\\
\end{eqnarray*}
where we used $\psi_{CA}(\pi_{CA}\ot A)=\hat \psi_{CA}$ in line 1, the characterisation of $m_{A^*B^*C^*}$ in line 2, the definition of $\hat \psi_{BA}$ and $\hat \psi_{CB}$ in line 3, $\psi_{CB}(\pi_{CB}\ot B)=\hat \psi_{CB}$ and $\psi_{BA}(\pi_{BA}\ot A)=\hat \psi_{BA}$ in line 4 and the characterisation of $m_{CBA}$ in line 5. 
Finally, for $a\in A$ and $f\in A^*$
$$f(\psi_{AA} (\pi_{AA}j_{A^*}\ot a))= f(\hat \psi_{AA}(j_{A^*}\ot a))=\psi_{A^*A^*}(j_{A^*} \ot f)(a)=f(a),$$
where we recall $\psi_{A^*A^*}(j_{A^*} \ot A^*)=A^*$. Since $f$ was arbitrary and by universal property we obtain the required compatibility with the multiplication and unit. We omit the proof of the fact that $\pi_{AB}$ is an isomorphism, as this follows from Theorem \ref{factor} below.
\end{proof}

The previously obtained dualities $\gamma$ and $\pi$ are related with the antipode as in the statement of the following proposition.

 \begin{theorem} \label{factor}
Let $\Xx$ be a class of Frobenius algebras and $\Xx^*$ the class of their dual Frobenius algebras. We have the following commutative diagram of Hopf category isomorphisms:
\[
\xymatrix{
\Cc(\Xx^*) \ar[rr]^{\pi} & & \Cc(\Xx)^{op,cop}\\
& \Cc(\Xx) \ar[ul]^{\gamma}\ar[ur]_{S}&
}
\]
\end{theorem}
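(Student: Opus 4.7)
The plan is to reduce the commutativity of the diagram to a single equation on hom-objects and verify that equation by a short computation with the three defining universal properties. Both $\pi\circ\gamma$ and $S$ are identity-on-objects morphisms of semi-Hopf categories $\Cc(\Xx)\to \Cc(\Xx)^{op,cop}$: indeed $\gamma$ sends $A$ to $A^*$ and $\pi$ sends $A^*$ back to $A$, while $S$ is identity-on-objects by definition. Hence, for fixed $A,B\in\Xx$, it suffices to show that the composite $\pi_{AB}\circ \gamma_{BA}\colon \Cc_{BA}\to \Cc_{AB}^{cop}$ equals $S_{BA}$.

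By Lemma~\ref{uniquemorphismuniversal} applied to the universal measuring $(\Cc_{AB},\psi_{AB})$ from $B$ to $A$, this equality is in turn equivalent to the equality of the induced measurings $\psi_{AB}\circ((\pi_{AB}\circ\gamma_{BA})\ot B)$ and $\psi_{AB}\circ(S_{BA}\ot B)$. By the construction of the antipode in the proof of \thref{univact}, the second measuring is $\bar\psi_{AB}(p\ot b)=e^1\,\nu_B(p(e^2)\,b)$. To evaluate the first, fix $p\in\Cc_{BA}$, $b\in B$, and test against an arbitrary $f\in A^*$. The defining property of $\pi_{AB}$ gives
\[
f\bigl(\psi_{AB}(\pi_{AB}(\gamma_{BA}(p))\ot b)\bigr)=\psi_{B^*A^*}(\gamma_{BA}(p)\ot f)(b),
\]
and the defining property of $\gamma_{BA}$ (transport of measurings along $\iota_A\colon A\cong A^*$) identifies this with $\iota_B\bigl(\psi_{BA}(p\ot \iota_A^{-1}(f))\bigr)(b)$. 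Plugging in the explicit formulae $\iota_A^{-1}(f)=f(e^1)e^2$ and $\iota_B(b')(b)=\nu_B(b'b)$, and using the $k$-linearity of $f$, this simplifies to $f(e^1)\,\nu_B(p(e^2)b)=f\bigl(e^1\nu_B(p(e^2)b)\bigr)=f(\bar\psi_{AB}(p\ot b))$. Since $f\in A^*$ was arbitrary and $A$ is finite dimensional, $\psi_{AB}(\pi_{AB}(\gamma_{BA}(p))\ot b)=\bar\psi_{AB}(p\ot b)$, whence Lemma~\ref{uniquemorphismuniversal} forces $\pi_{AB}\circ\gamma_{BA}=S_{BA}$, and the diagram commutes.

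A pleasant by-product of this identification is that $\pi=S\circ\gamma^{-1}$ is a composition of isomorphisms of Hopf categories (bijectivity of $S$ being the content of \thref{univact} and that of $\gamma$ being part of the preceding theorem), which supplies the proof, omitted earlier, that $\pi$ is an isomorphism. The main obstacle is essentially organisational: one has to keep careful track of the object maps of the three morphisms (identity, dualisation, predual) and the corresponding indexing of the (possibly co-opposite) hom-coalgebras, so that $\pi_{AB}\circ\gamma_{BA}$ and $S_{BA}$ are seen to live in matching components. Once this bookkeeping is settled, the verification requires nothing beyond the three universal properties and the explicit form of the Frobenius duality $\iota_A$.
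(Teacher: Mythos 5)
Your proposal is correct and follows essentially the same route as the paper: reduce to the component identity $\pi_{AB}\circ\gamma_{BA}=S_{BA}$, test against an arbitrary $f\in A^*$ using the defining properties of $\pi$ and $\gamma$ together with the explicit formulae for $\iota_A^{-1}$ and $\iota_B$, recognise the result as $f(\bar\psi_{AB}(p\ot b))=f(\psi_{AB}(S_{BA}(p)\ot b))$, and conclude by the universal property of $\Cc_{AB}$ (Lemma~\ref{uniquemorphismuniversal}). Your closing remark that $\pi=S\circ\gamma^{-1}$ yields the invertibility of $\pi$ is exactly the role the paper assigns to this theorem as well.
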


\begin{proof}
Since $\pi\gamma(A)=A$ we know that the morphism agree on the index sets $X$. We show $\pi_{AB}\gamma_{BA}=S_{BA}$. Therefore, let $p\in \Cc_{AB}, b\in B, f\in A^*$ and compute
\begin{eqnarray*}
f(\psi_{AB}(\pi_{AB}\gamma_{BA} \ot B)(p\ot b))&=&f(\hat \psi_{AB}(\gamma_{AB}(p) \ot b))\\
&=&\psi_{B^*A^*}(\gamma_{BA}(p)\ot f) (b)\\
&=&\psi^*_{B^*A^*}(p\ot f) (b)=\iota_B(\psi_{BA} (p\ot \iota^{-1}_A(f))(b)\\
&=&\nu_B(\psi_{BA} (p\ot f(e^1)e^2 )b)=\nu_B(f(e^1)p(e^2)b)\\
&=&f(e^1\nu_B(p(e^2)b))=f(\psi_{AB} (S_{BA}(p)\ot b))\\
&=&f(\psi_{AB}(S_{BA} \ot B)(p\ot b)).
\end{eqnarray*}
Since $f$ was arbitrary and by universal property of $\Cc_{AB}$ we have $\pi_{AB}\gamma_{BA}=S_{BA}$. Since $S$ and $\gamma$ are isomorphisms, it follows that $\pi$ is an isomorphism as well.
\end{proof}

The results of this section can be dualized for comeasurings. We will avoid explicit proofs and just state the concluding result in this case.

\begin{theorem} 
Let $\Xx$ be a class of Frobenius algebras and $\Xx^*$ the class of the dual of algebras in $\Xx$. We have a commutative diagram of isomorphisms of Hopf opcategories as follows:
\[
\xymatrix{
\Aa(\Xx^*) \ar[rr]^{\varpi} & &\Aa(\Xx)^{op,cop}\\
& \Aa(\Xx) \ar[ul]^{\alpha}\ar[ur]_{S}&
}
\]
\end{theorem}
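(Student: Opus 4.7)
The plan is to mirror the proof of Theorem~\ref{factor} for comeasurings, replacing each universal-property argument with its dual form. The two key ingredients are (i) the isomorphism $\Comeas(A,B)\cong \Comeas(A^*,B^*)$ established in the preceding lemma, which transfers initial objects and hence produces $\alpha$, and (ii) an ``adjoint-transpose'' construction of a comeasuring $B\to A\ot \Aa_{B^*A^*}^{op}$ out of the universal comeasuring $\rho^{B^*A^*}$, which produces $\pi$.

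Concretely, I would first construct $\alpha_{AB}:\Aa_{AB}\to \Aa_{A^*B^*}$ by observing that $\rho'^{AB}:=(\iota_A\ot \Aa_{AB})\circ \rho^{AB}\circ \iota_B^{-1}:B^*\to A^*\ot \Aa_{AB}$ is a comeasuring of Frobenius algebras (since $\iota_A,\iota_B$ are isomorphisms of Frobenius algebras). The universal property of $\Aa_{A^*B^*}$ yields a unique algebra morphism $\alpha^{-1}_{AB}:\Aa_{A^*B^*}\to \Aa_{AB}$ making the appropriate square commute; by the lemma this map is invertible, giving $\alpha_{AB}$. For $\pi_{AB}:\Aa_{B^*A^*}\to \Aa_{AB}^{op}$, using finite-dimensionality of $A$ I would transpose the universal $\rho^{B^*A^*}:A^*\to B^*\ot \Aa_{B^*A^*}$ into a map $\hat\rho^{AB}:B\to A\ot \Aa_{B^*A^*}^{op}$ characterized by $(f\ot \Aa_{B^*A^*}^{op})\circ \hat\rho^{AB}(b)=\rho^{B^*A^*}(f)(b)$ for all $f\in A^*,b\in B$; verifying that this is a comeasuring of Frobenius algebras is a direct computation, with the opposite multiplication on $\Aa_{B^*A^*}^{op}$ corresponding exactly to the swap of order in the measuring identities on the measuring side. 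The universal property of $\Aa_{AB}$ then produces $\pi_{AB}$.

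Second, I would establish that $\alpha$ and $\pi$ are isomorphisms of Hopf opcategories. Compatibility with the comultiplication $d$, counit $e$, and antipode $S$ follows by the same universal-property technique used throughout \S\ref{se:dualities}: one rewrites both sides of each required equality as the unique algebra morphism induced by a common comeasuring arising from composing $\rho^{BA}$'s and $\rho^{AB}$'s with the appropriate dualities. This is essentially the opcategorical dualization of the computations done for $\gamma$ and for the measuring $\pi$.

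Third, for the commutative triangle $\pi\circ \alpha=S$, by Lemma-style uniqueness in the universal property of $\Aa_{BA}$ it suffices to show
\[
(A\ot \pi_{AB}\alpha_{BA})\circ \rho^{BA}=(A\ot S_{BA})\circ \rho^{BA}.
\]
Pairing with an arbitrary $f\in A^*$ reduces this to an identity in $\Aa_{B^*A^*}$ which unfolds, via the defining identities of $\alpha$ (through $\iota_A,\iota_B$), of $\pi$ (through the transposition), and of $S$ (through the formula $\bar\rho^{AB}(b)=e^1\nu_B(e^{2[0]}b)\ot e^{2[1]}$), to a computation completely analogous to the proof that $\pi_{AB}\gamma_{BA}=S_{BA}$ in the measuring case. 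The main obstacle I anticipate is not conceptual but bookkeeping: carefully keeping track of the $op$ and $cop$ structures on target objects and of how they interact with the Frobenius duality $\iota_A\ot \iota_B$ and with the Casimir identities. Once the orderings are fixed, each step becomes a reinterpretation of an identity already verified on the measuring side.
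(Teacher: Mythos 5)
Your proposal is correct and follows essentially the same route as the paper, which in fact omits an explicit proof: it simply declares that the measuring results of \seref{duality2} dualize, and in the accompanying remark defines $\alpha$ via $\rho\mapsto(\iota\ot Q)\rho\,\iota^{-1}$ and $\pi$ via the finite-dimensional transpose $\hat\rho$ of the universal comeasuring, exactly as you do. Your plan — transferring initial objects along the $\Comeas(A,B)\cong\Comeas(A^*,B^*)$ isomorphism, checking the transposed $\hat\rho$ is a comeasuring over the opposite algebra, and verifying compatibility and the triangle $\pi\circ\alpha=S$ by initiality/uniqueness, mirroring the computation of Theorem~\ref{factor} — is precisely the intended dualization, with only the $op/cop$ and index bookkeeping (which you correctly flag) left to fix.
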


\begin{remark}
Let us illuminate how to define the isomorphisms $\alpha$ and $\varpi$ dual to the above morphisms $\gamma$ and $\pi$ for universal measuring coalgebras. The isomorphism $\alpha$ can be constructed by assigning to every comeasuring $\rho:A\to B\ot Q$ a comeasuring $\rho^*:=(\iota_B \ot Q)\rho \iota^{-1}_A: A^*\to B^*\ot Q$, where $\iota_A:A\to A^*$ and $\iota_B:B\to B^*$ are the isomorphisms of Frobenius algebras recalled at the beginning of this section.
Furthermore, given the universal comeasuring $\rho^{B^*A^*}$ from $A^*$ to $B^*$, let $\hat \rho^{AB}: B\to \Aa_{B^*A^*}^{op} \ot B$ be the comeasuring defined by 
$$((f\ot \Aa)\hat \rho^{AB})(b):=\rho^{B^*A^*}(f)(b\ot \Aa_{B^*A^*}), \qquad \forall  b \in B, f\in A^*.$$ 
Then this gives rise to the isomorphism $\varpi$. 
\end{remark}

\section{Examples}\selabel{examples}

The notion of a measuring between Frobenius algebras is rather restrictive. Let us illustrate this by considering the trivial Frobenius algebra $k$ (where all structure maps are identities), and considering a non-zero comeasuring $\rho: A\to k\ot Q\cong Q$, where $A$ is an arbitrary Frobenius algebra. By the second axiom of \equref{comeas1} we then already find that
$$\rho(a)=\nu(a)1_Q,$$
so that $\rho$ is uniquely determined. Furthermore, the first axiom of \equref{comeasalg} then implies that
$$\nu(aa')=\nu(a)\nu(a'),$$
which means that the counit of $A$ is multiplicative. However, combining this with the counit property and Frobenius property, we find that 
$$a=\nu(ae^1)e^2=\nu(a)\nu(e^1)e^2=\nu(a)1_A$$
for any $a\in A$. In other words, we find that no non-trivial comeasurings exist from $A$ to $k$, unless $A$ is already isomorphic to $k$. Consequently, we find that $\Aa_{A,k}=\Aa_{k,A}=\Cc_{A,k}=\Cc_{k,A}=0$ whenever $A$ is non-isomorphic to $k$.

In this remaining section we compute some further examples of universal measuring coalgebras and comeasuring algebras. We follow the procedure described in  the proof of \cite[Theorem 3.16]{AGV} to construct these objects. Throughout this section, we suppose that $k$ is a field (and not just a commutative ring), which allows us to explicitly describe algebras and coalgebra structures in terms of their base elements.

Since it is more convenient to work with the tensor algebra and factoring out ideals in the comeasuring case than dealing with subcoalgebras of the cofree coalgebra in the measuring case, we will, given two Frobenius algebras $A$ and $B$, first construct $\Aa_{AB}$ by means of generators and relations and then construct $\Cc_{AB}=\Aa_{AB}^{\circ}$ by duality. By specifying $A$ and $B$ to particular small Frobenius algebras, we will provide an explicit description of the universal measuring coalgebra in these cases. As it will turn out, all universal measuring coalgebras we obtain are finite dimensional.

Another aim is to show that our Hopf category of Frobenius algebras is richer than the ordinary category of Frobenius algebra, meaning there are examples where the universal measuring coalgebra is not generated by grouplike elements (i.e. (iso)morphisms of Frobenius algebras). Furthermore we will show that there are non-isomorphic Frobenius algebras $A$ and $B$ for which $\Cc_{BA}$ is non zero. 

Consider Frobenius algebras $A$ and $B$ and choose a (finite) base $(a_{\alpha})_{\alpha\in I}$ of $A$ and $(b_\beta)_{\beta\in J}$ of $B$. Define the map $\rho_0: A\to B\ot (B^*\ot A)$ given by 
$$\rho_0(a_\alpha)=\sum_{\beta\in J}b_\beta\ot (b_\beta^* \ot a_\alpha),$$
where $b_\beta^*$ are the dual base vectors for the chosen base for $B$.
Now we denote the structure constants $a_{m;\alpha_1,\alpha_2}^\gamma, a_{\Delta;\alpha}^{\gamma_1,\gamma_2},a_{1}^\gamma, a_{\nu;\alpha} \in k$ of $A$ as follows
\begin{eqnarray*}
a_{\alpha_1}a_{\alpha_2}=\sum_{\gamma \in I} a_{m;\alpha_1,\alpha_2}^\gamma a_{\gamma},& 1=\sum_{\gamma \in I} a_{1}^\gamma a_{\gamma},\\\\
\Delta(a_\alpha)=\sum_{\gamma_1,\gamma_2 \in I} a_{\Delta;\alpha}^{\gamma_1,\gamma_2} (a_{\gamma_1}\ot a_{\gamma_2}), & 
\nu(a_\alpha)=a_{\nu;\alpha}.
\end{eqnarray*}
for all $\alpha,\alpha_1,\alpha_2,\gamma,\gamma_1,\gamma_2 \in I$.
Similarly we denote the structure constants $b_{m;\beta_1,\beta_2}^\mu, a_{\Delta;\beta}^{\mu_1,\mu_2},a_{1}^\mu, a_{\nu;\beta} \in k$ of $B$ for $\beta,\beta_1,\beta_2,\mu,\mu_1,\mu_2 \in J$. We now define $\Aa_{BA}:=T\left(B^{*} \otimes A\right)/I$ where $T\left(B^{*} \otimes A\right)$ is the tensor algebra over the vector space $B^{*} \otimes A$ and $I$ is the ideal generated by the elements
\begin{eqnarray*}
\sum_{\gamma\in I} a_{m;\alpha_1,\alpha_2}^\gamma (b_\mu^*\ot a_\gamma)&-&\sum_{\beta_1,\beta_2\in J} b_{m;\beta_1,\beta_2}^\mu (b_{\beta_1}^*\ot a_{\alpha_1})(b_{\beta_2}^*\ot a_{\alpha_2}),\\
\sum_{\gamma_1,\gamma_2 \in I} a_{\Delta;\alpha}^{\gamma_1,\gamma_2} (b_{\mu_1}^*\ot a_{\gamma_1})(b_{\mu_2}^*\ot a_{\gamma_2})&-&\sum_{\beta \in J} b_{\Delta;\beta}^{\mu_1,\mu_2} (b_{\beta}^*\ot a_{\alpha}),\\
\sum_\gamma a_{1}^{\gamma} (b_{\mu}^*\ot a_{\gamma})&-&b_{1}^{\mu},\\
a_{\nu;\alpha} &-&\sum_\beta b_{\nu;\beta} (b_{\beta}^*\ot a_{\alpha}) \\
\end{eqnarray*}
for all choices of $\alpha,\alpha_1,\alpha_2\in I$ and $\mu,\mu_1,\mu_2\in J$.
It can now be shown that $\rho^{BA}:A\to B\ot \Aa_{BA}$, given by the composition
\[
\xymatrix{A \ar[r]^-{\rho_0} & B\ot (B^*\ot A) \ar@{^{(}->}[r]& B\ot T(B^*\ot A)  \ar@{->>}[r] & B\ot \Aa_{BA}}
\]
is the universal comeasuring from $A$ to $B$. By applying this coaction twice, we find that the cocomposition of the associated Hopf-opcategory is given on generators by the formula
\begin{eqnarray}
&\delta_{ABC}: \Aa_{AC}\to \Aa_{AB}\ot \Aa_{BC} \nonumber\\
&\delta_{ABC}(a^*\ot c)=\sum_{\beta\in J} (a^*\ot b_\beta) \ot (b^*_\beta\ot c).\eqlabel{cocomp}
\end{eqnarray}

We will now apply the construction to the case of Frobenius algebras of the form $k[G]$ where $G$ is a finite group. The algebra structure is in this case given by the usual group algebra and the comultiplication and counit on base elements (i.e. elements of $G$) is given by
\begin{eqnarray*}
\Delta_{G}(g)&=&\sum_{h \in G} g h^{-1} \otimes h,\\
\nu(e_G)=1,&& \nu(g)=0, g\neq e_G,
\end{eqnarray*}
where $e_G$ is the unit element of $G$. Therefore the only non-zero structure constants are $a_{m;x,y}^{xy}=1$, $a_{\Delta;xy}^{x,y}=1$, $a_{1}^{1_G}=1$, $a_{\nu,1_G}=1$ for $x,y\in G$.

Combining the above, we find that the universal comeasuring algebra $\Aa_{k[G],k[H]}$ for some finite groups $G$ and $H$ is given by the quotient of the free $k$-algebra over the set $\{ g^*\ot h~|~ g\in G,h\in H\}$, by the ideal generated by
\begin{eqnarray} \eqlabel{groupalg1}
((ab)^*\ot x)-\sum_{h\in H} (a^* \ot xh^{-1})(b^* \ot h) & a,b\in G , x\in H, \\ \eqlabel{groupalg2}
(a^*\ot xy)-\sum_{g\in G}(a^*(g^{-1})^*\ot x)(g^* \ot y) & a\in G, x,y \in H,\\ \eqlabel{groupalg3}
\left(1_{G}^{*} \otimes 1_{H}\right)-1,&  \\ \eqlabel{groupalg4}
\left(1_{G}^{*} \otimes x\right) &  1_{H}\neq x\in H, \\ \eqlabel{groupalg5}
\left(a^* \otimes 1_{H}\right) &  1_G \neq a \in G .
\end{eqnarray}
Remark that all equations factored out are symmetric in the $G$ and $H$ component. More precisely, we have the following result. 

\begin{proposition} \label{symmeas}
Let $G$ and $H$ be finite groups and $k$ a field. Then the assignment $h^*\ot g\mapsto g^*\ot h$ induces an algebra isomorphism $\mathcal{A}_{k[H],k[G]} \simeq\mathcal{A}_{k[G],k[H]}$. Therefore also $\mathcal{C}_{k[H],k[G]} \simeq \mathcal{C}_{k[H],k[G]}$ as coalgebras.
\end{proposition}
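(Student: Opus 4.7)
My plan is to exhibit the isomorphism directly on the presentation by generators and relations derived just above the statement. Define $\tau: k[G]^*\ot k[H] \to k[H]^*\ot k[G]$ on the distinguished basis by $\tau(g^*\ot h)=h^*\ot g$ and extend multiplicatively to an algebra isomorphism $\tilde\tau:T(k[G]^*\ot k[H])\to T(k[H]^*\ot k[G])$ between the underlying tensor algebras, the inverse being the analogous reverse swap. It then suffices to show that $\tilde\tau$ identifies the defining ideals of the two universal comeasuring algebras, after which $\tilde\tau$ will descend to the required algebra isomorphism $\bar\tau:\Aa_{k[G],k[H]}\simeq\Aa_{k[H],k[G]}$.

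The crucial observation is that the relations \equref{groupalg1}--\equref{groupalg5} are globally symmetric in the roles of $G$ and $H$: relation \equref{groupalg1} for $(G,H)$ encodes the comultiplication axiom of the comeasuring, whereas \equref{groupalg2} encodes the multiplication axiom, but after reindexing the summation variable (substituting $x=\gamma^{-1}h$ in \equref{groupalg1}, or equivalently $g_2=g_1^{-1}g$ in \equref{groupalg2}) one sees that $\tilde\tau$ carries \equref{groupalg1} for $(G,H)$ precisely onto \equref{groupalg2} for $(H,G)$, and vice versa. Relation \equref{groupalg3} is manifestly self-dual under $\tau$, while \equref{groupalg4} and \equref{groupalg5} are exchanged. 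This bookkeeping is the only non-trivial step, but it is purely combinatorial once the symmetric form of the relations is noticed.

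For the coalgebra isomorphism (the right-hand side of which is presumably a typographical repetition of the left and should read $\Cc_{k[G],k[H]}$), I would invoke Theorem~\ref{meascomeas}(1), which provides natural coalgebra isomorphisms $\Cc_{BA}\cong\Aa_{BA}^\circ$ for any pair of finite-dimensional $\Omega$-algebras. Applying the Sweedler-dual functor $(-)^\circ$ to $\bar\tau$ yields a coalgebra isomorphism $\Aa_{k[H],k[G]}^\circ\simeq\Aa_{k[G],k[H]}^\circ$, and composing with the two instances of Theorem~\ref{meascomeas}(1) delivers $\Cc_{k[H],k[G]}\simeq\Cc_{k[G],k[H]}$. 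I do not expect a real obstacle: the argument is driven entirely by the observation that the defining relations of $\Aa_{k[G],k[H]}$ are symmetric under the simultaneous swap of the two tensor factors and of the groups $G\leftrightarrow H$, together with the general duality of Theorem~\ref{meascomeas}(1).
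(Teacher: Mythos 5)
Your proposal is correct and is exactly the paper's argument: the paper proves the proposition by the one-line remark that the defining relations \equref{groupalg1}--\equref{groupalg5} are completely symmetric in the $G$ and $H$ components, which is precisely the swap isomorphism on $T(k[G]^*\ot k[H])$ that you spell out (with \equref{groupalg1} and \equref{groupalg2} exchanged, \equref{groupalg3} fixed, and \equref{groupalg4}, \equref{groupalg5} exchanged), followed by dualization via Theorem~\ref{meascomeas}. Your reading of the right-hand side of the coalgebra statement as a typo for $\Cc_{k[G],k[H]}$ is also the intended one.
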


The previous proposition is in fact a reincarnation of the Hopf category isomorphism $\pi$ from Proposition~\ref{isomorphismpi}. To see this, observe that the isomorphism $\iota_{k[G]}:k[G]\to k[G]^*$, is given by the formula $\iota_{k[G]}(g)=\nu(g-)=(g^{-1})^*$. Furthermore, following the proof of Theorem \ref{univcoact}, the antipode $S_{k[G],k[H]}:\Aa_{k[G],k[H]}\to \Aa_{k[H],k[G]}$ is given by the formula
\begin{equation}\eqlabel{antipodegroup}
g^*\ot x \mapsto  \sum_{h\in H} \nu_{k[H]}(hx)h^*\ot g^{-1} = (x^{-1})^* \ot g^{-1}
\end{equation}
By combining the above formula, one then indeed recovers the commutative diagram from Theorem~\ref{factor}.

\begin{example} 
For the trivial group, the group algebra is just the base field $k$, which is the case that we already treated at the beginning of this section. The next simplest example is the cyclic group with two elements $G=\{e,g\}$. 
For any group $H$ non-isomorphic to $C_{2}$ we find $\mathcal{A}_{k[C_2],k[H]}=0$. If $H$ is the trivial group, then this follows from the reasoning in the beginning of this section. Now, let $|H|\geq 3$ and let $h$ be an arbitrary non trivial element in $H$. Then there exists an element $1_H\neq k\in H$ such that $hk\neq 1_H$. Now we can use \equref{groupalg2} and \equref{groupalg4} to show:
$$(g^*\ot h)=(g^*\ot hkk^{-1})=(g^*\ot hk )(e^*\ot k^{-1})+ (e^*\ot hk)(g^*\ot k^{-1})=(g^*\ot hk )0+ 0(g^*\ot k^{-1})=0$$
Since $h$ was arbitrary and $(e^*\ot h)=0$ by \equref{groupalg4}, the only generator of $\mathcal{A}_{k[C_2],k[H]}$ is $(e^*\ot 1_H)$. However using \equref{groupalg3} and \equref{groupalg2} we obtain
$$1=(e^*\ot 1_H)=(e^*\ot hh^{-1})=(e^*\ot h)(e^*\ot h^{-1})+(g^*\ot h)(g^*\ot h^{-1})=0$$
in $\mathcal{A}_{k[C_2],k[H]}$.
Therefore, $\mathcal{A}_{k[C_2],k[H]}=\mathcal{A}_{k[H],k[C_2]}=\mathcal{C}_{k[C_2],k[H]}=\mathcal{C}_{k[H],k[C_2]}=0$ for all groups $H \neq C_{2}$.
\end{example}

\begin{example} 
Let us now compute the universal coacting Hopf algebra of $k[C_{2}]$, which is generated as an algebra by the element $x:=\left(g^{*} \otimes g\right)$ (remark that the other initial generators of the free algebra become either $1$ or $0$ in the quotient). We can use \equref{groupalg4}, \equref{groupalg2} and \equref{groupalg3} to show:
$$x^{2}=(g^{*} \ot g)(g^{*} \ot g)=(g^{*} \ot g)(g^{*} \ot g)	+(e^{*} \ot g)(e^{*} \ot g)=(e^{*} \otimes gg)=(e^*\ot e)=1.$$
Therefore we find a representation of the universal coacting Hopf algebra 
$$\mathcal{A}_{k[C_2],k[C_{2}]}=k[x] /\left\langle x^{2}-1\right\rangle.$$ 
By \equref{cocomp}, the comultiplication is then given by
$$\delta(x)=\delta(g^*\ot g)=(g^*\ot e)\ot (e^*\ot g) + (g^*\ot g)\ot (g^*\ot g) = x\ot x$$
and of course $\delta(1)=1\ot 1$. 
Since both $x$ and $1$ are grouplike we have $\epsilon(1)=\epsilon(x)=1$. The antipode $S$ is the identity.
We conclude that the universal coacting Hopf algebra in this case is isomorphic to the group algebra $k[C_{2}]$. 

It is well-known that the dual Hopf algebra of $k[C_{2}]$ is isomorphic to $k[C_{2}]$ if the characteristic of $k$ is different from $2$
and is generated by the two grouplike elements $\left(1^{*}+x^{*}\right),\left(1^{*}-x^{*}\right)$. These grouplike elements correspond to the identity on $k[C_2]$ and the Frobenius automorphism sending $e \mapsto e$ and $g\mapsto -g$.\\
 For $\Char(k)=2$ the two grouplikes (and the corresponding two automorphisms of the Frobenius algebra) coincide. However, in this case $x^{*}$ is a primitive element: 
 $$\delta'(x^*)=(1^{*} \ot x^{*})+(x^{*} \ot 1^{*})= (1^{*} \ot x^{*})+(x^{*} \ot 1^{*})+2(x^{*} \ot x^{*})=(1^{*} + x^{*})\ot x^*+x^*\ot (1^{*} +x ^{*}).$$ 
This primitive element corresponds to a bi-derivation $D: k[C_{2}] \to k[C_{2}]$ (see beginning of Section~\ref{HopfFrob}). Therefore in the case of characteristic 2 the universal acting Hopf algebra is generated by the identity and a bi-derivation. Already in this small example we see therefore that the universal acting Hopf algebra can be different from a groupalgebra.
\end{example}

\begin{example}
Next in line is the group of order $3$, $C_{3}:=\{e,a,b\}$ with $a^2=b,ab=e$ and neutral element $e$. For the universal coacting algebra of $k[C_{3}]$ we find  by \equref{groupalg4} and \equref{groupalg5} that the only generators different from $0$ and $1$ are $a^{*} \otimes a$, $a^{*} \otimes b$, $b^{*} \otimes a$ and $b^{*} \otimes b$. 
Furthermore, for any $x\neq e\neq y$ in $C_3$ we have 
$$(a^{*} \ot x)(a^{*} \ot y)=(a^{*} \ot x)(a^{*} \ot y)+(e^{*} \ot x)(b^{*} \ot y)+(b^{*} \ot x)(e^{*} \ot y)=b^{*} \ot x y,$$
by using \equref{groupalg4} and \equref{groupalg2}. Similar calculations for $b$ yields $(b^{*} \ot x)(b^{*} \ot y)=a^*\ot xy$. By symmetry we also have $(x^{*} \ot a)(y^{*} \ot a)=(xy)^*\ot b$ and  $(x^{*} \ot b)(y^{*} \ot b)=(xy)^*\ot a$. 
Therefore, the only additional base elements for $\Aa_{k[C_3],k[C_3]}$ are
 $(a^{*} \ot a)(b^{*} \ot b)$ and $(b^{*} \ot a)(a^{*} \ot b)$.
To simplify notation we rename the elements as follows
\begin{eqnarray*}
A &:=\left(a^{*} \otimes a\right), B:=\left(b^{*} \otimes b\right), X:=\left(a^{*} \otimes a\right)\left(b^{*} \otimes b\right) \\
C &:=\left(a^{*} \otimes b\right), D:=\left(b^{*} \otimes a\right), Y:=\left(b^{*} \otimes a\right)\left(a^{*} \otimes b\right),
\end{eqnarray*}
and with this notation, the multiplication table is presented below.

\[
\begin{tabular}{|c|cccccc|}
\hline & $X$ & $ {A}$ & $ {B}$ & $ {Y}$ & $ {C}$ & $ {D}$ \\
\hline $ {X}$ & $ {X}$ & $ {A}$ & $ {B}$ & 0 & 0 & 0 \\
$ {A}$ & $ {A}$ & $ {B}$ & $ {X}$ & 0 & 0 & 0 \\
$ {B}$ & $ {B}$ & $ {X}$ & $ {A}$ & 0 & 0 & 0 \\
$ {Y}$ & 0 & 0 & 0 & $ {Y}$ & $ {C}$ & $ {D}$ \\
$ {C}$ & 0 & 0 & 0 & $ {C}$ & $ {D}$ & $ {Y}$ \\
$ {D}$ & 0 & 0 & 0 & $ {D}$ & $ {Y}$ & $ {C}$ \\
\hline
\end{tabular}
\]
One can notice that the multiplication is commutative and the unit is given by $1=X+Y (=e^*\ot e)$.
Therefore we have a six dimensional algebra presented as the direct product of two 3 dimensional algebras: $\mathcal{A}_{k[C_{3}],k[C_{3}]} \simeq k[C_{3}] \times k [C_{3}]$. However the coalgebra structure intertwines the two subalgebras. Explicitly, the comultiplication is given by the following formulas.
\begin{eqnarray*}
\delta(A)=A \otimes A+D \otimes C&& \delta(C)=C \otimes A+B \otimes C\\
\delta(B)=B \otimes B+C \otimes D&& \delta(D)=A \otimes D+D \otimes B\\
\delta(X)=\delta(A) \delta(B)=X \otimes X+Y \otimes Y&& \delta(Y)=\delta(C) \delta(D)=X \otimes Y+Y \otimes X
\end{eqnarray*}
To obtain the formula for the counit $\nu$, recall that $\nu$ is an algebra morphism satisfying $(id\ot \nu)\circ \rho=id$ from this, we can deduce that
$$\epsilon(A)=\epsilon(B)=\epsilon(X)=1 , \qquad \epsilon(C)=\epsilon(D)=\epsilon(Y)=0.$$
For the antipode $S$ we use formula \equref{antipodegroup} and find
$$S(A)=B,\quad S(B)=A,\quad S(X)=X,\quad S(Y)=Y,\quad S(C)=C,\quad S(D)=D.$$
Now we dualize the Hopf algebra to find the universal acting Hopf algebra $\mathcal{C}_{k[C_{3}],k[C_{3}]}$. It is generated by $X^{*}, A^{*}, B^{*},$ $ Y^{*}, C^{*}, D^{*}$. The coalgebra structure is given by the coalgebra structure on the Frobenius algebra $k[C_{3}]$ with generators $\{X^{*}, A^{*}, B^{*}\}$ and $\{Y^{*}, C^{*}, D^{*}\}$. 
The multiplication table comes out as:
\[
\begin{tabular}{|c|cccccc|}
\hline & $X^{*}$ & $A^{*}$ & $B^{*}$ & $Y^{*}$ & $C^{*}$ & $D^{*}$ \\
\hline$X^{*}$ & $X^{*}$ & 0 & 0 & $Y^{*}$ & 0 & 0 \\
$A^{*}$ & 0 & $A^{*}$ & 0 & 0 & 0 & $D^{*}$ \\
$B^{*}$ & 0 & 0 & $B^{*}$ & 0 & $C^{*}$ & 0 \\
$Y^{*}$ & $Y^{*}$ & 0 & 0 & $X^{*}$ & 0 & 0 \\
$C^{*}$ & 0 & $C^{*}$ & 0 & 0 & 0 & $B^{*}$ \\
$D^{*}$ & 0 & 0 & $D^{*}$ & 0 & $A^{*}$ & 0 \\
\hline
\end{tabular}
\]
The unit is given by $X^{*}+A^{*}+B^{*}$ and the antipode is given by $S^{*}(A^{*})=B^{*}$, $S^{*}(B^{*})=A^{*}$ and the identity on the other generators. The universal action $\psi: \mathcal{C}_{k[C_{3}],k[C_{3}]} \otimes k[C_{3}] \to k[C_{3}]$ is given by
\begin{eqnarray*}
\psi(X^{*}\ot e)=\psi(Y^{*}\ot e)=e \\
\psi(A^{*}\ot a)=\psi(D^{*}\ot b)=a \\
\psi(B^{*}\ot b)= \psi(C^{*}\ot a)=b 
\end{eqnarray*}
and zero everywhere else. Note that $X^{*}$ and $Y^{*}$ have the same action, but are distinguishable by the coalgebra structure. We have two grouplike elements $X^{*}+A^{*}+B^{*}$, corresponding to the identity of $k[C_3]$ and $Y^{*}+C^{*}+D^{*}$ corresponding to the automorphism of Frobenius algebras $\tau =\psi((Y^*+C^*+D^* )\ot -): k[C_3]\to k[C_3]$ given by $\tau(a)=b, \tau(b)=a$. Remark that, for example, the element $X^*+A^*+B^*$ also acts as the identity on $k[C_3]$, although the resulting element in $\mathcal{C}_{k[C_{3}],k[C_{3}]}$ is not grouplike.

If the characteristic of the field $k$ is different from $3$ and furthermore $k$ has some primitive third root of unity, then we find additional grouplike elements. Indeed, suppose $1\neq \xi \in k$ satisfies $\xi^2+\xi+1=0$, then the following elements of  $\mathcal{C}_{k[C_{3}],k[C_{3}]}$ are also grouplike: 
$$X^{*}+\xi A^{*}+\xi^{2} B^{*},\qquad X^{*}+\xi^{2} A^{*}+\xi B^{*},\quad Y^{*}+\xi C^{*}+\xi^{2} D^{*}, \quad Y^{*}+\xi^{2} C^{*}+\xi D^{*}.$$ 
As in this way, we found $6$ grouplike elements, the Hopf algebra $\mathcal{C}_{k[C_{3}],k[C_{3}]}$ has a base of grouplike elements. Since the comultiplication of $\mathcal{A}_{k[C_{3}],k[C_{3}]}$ was not cocommutative, the multiplication of $\mathcal{C}_{k[C_{3}],k[C_{3}]}$ is not commutative, hence the grouplike elements form a non-commutative group of order $6$. We can conclude that $$\mathcal{C}_{k[C_{3}],k[C_{3}]}\cong k[\mathbb S_3]$$ 
as Hopf algebra, and $\mathcal{C}_{k[C_{3}],k[C_{3}]}$ is completely determined by the set of Frobenius automorphisms of $k[C_{3}]$.

As in the example of the universal coaction of $k[C_2]$ above, we find biderivations if the field is of characteristic 3. For example, we have in this case:
\begin{eqnarray*}
& & (X^{*}+A^{*} + B^{*})  \ot(A^{*}-B^{*}) +(A^{*}-B^{*}) \ot (X^{*}+A^{*}+B^{*}) \\
&=& (X^{*} \ot A^{*}) + (A^{*} \ot A^{*}) +(B^{*} \ot A^{*})-(X^{*} \ot B^{*})-(A^{*} \ot B^{*}) -(B^{*} \ot B^{*}) \\
&+& (A^{*} \ot X^{*}) + (A^{*} \ot A^{*}) +(A^{*} \ot B^{*})-(B^{*} \ot X^{*})-(B^{*} \ot A^{*}) -(B^{*} \ot B^{*}) \\
&+& (B^{*} \ot B^{*}) + (A^{*} \ot A^{*}) - (A^{*} \ot A^{*})-(B^{*} \ot B^{*})\\ 
&=& \delta(A^{*}) + 0 -\delta(B^{*}) -0=\delta(A^{*}-B^{*})
\end{eqnarray*}
Therefore $A^*-B^*$ is a primitive element in $\mathcal{C}_{k[C_{3}],k[C_{3}]}$ and hence a biderivation.
Similarly, $\left(C^{*}-D^{*}\right)$ is a biderivation too and further any linear combination of these. In particular, in this case, 
$\mathcal{C}_{k[C_{3}],k[C_{3}]}$ is not a group algebra.
\end{example}

\begin{example}
Let $H$ be a group non isomorphic to $C_3$. Then we have that 
$$\mathcal{A}_{k[C_3],k[H]}=\mathcal{A}_{k[H],k[C_3]}=\mathcal{C}_{k[C_3],k[H]}=\mathcal{C}_{k[H],k[C_3]}=0.$$
Indeed, we use notation as above for the elements of $C_3$ and for the universal comeasuring algebra.
For any $x\neq 1_H \neq y$ in $H$ we then have the following identity in $\mathcal{A}_{k[C_3],k[H]}$:
\begin{eqnarray} \eqlabel{up}
(a^*\ot xy)=(a^*\ot x)(e^*\ot y)+(e^*\ot x)(a^*\ot y)+(b^*\ot x)(b^*\ot y)=(b^*\ot x)(b^*\ot y).
\end{eqnarray}
Similarly 
\begin{eqnarray} \eqlabel{down}
(b^*\ot xy)=(a^*\ot x)(a^*\ot y).
\end{eqnarray}
Suppose now $|H|\geq 4$ and let $z\in H$. We can write $z=xhh^{-1}y$ such that $x,h,y$ are all non trivial as well as $xh $ and $h^{-1}y$. Then we find
\begin{eqnarray*}
(b^*\ot z)&=&(a^*\ot x)(a^*\ot y)=(b^*\ot xh)(b^*\ot h^{-1})(b^*\ot h)(b^*\ot h^{-1}y)\\
&=&(b^*\ot xh)(a^*\ot h^{-1} h)(b^*\ot h^{-1}y)\\
&=&(b^*\ot xh)(a^*\ot e_H)(b^*\ot h^{-1}y)=0
\end{eqnarray*} 
using \equref{up}, \equref{down} and  \equref{groupalg5}. Since $z$ was arbitrary we obtain $(b^*\ot z)=0$ for all $z\in H$ and therefore $(a^*\ot z)=(b^*\ot x)(b^*\ot y)=0$. Hence $\Aa_{k[C_3],k[H]}$ is only generated by $(e^*\ot 1_H)$. But since $(e^*\ot 1_H)=\sum_{x\in H} (a^*\ot x^{-1})(b^*\ot x)=0$ we have that $\Aa_{k[C_3],k[H]}=0$. The other statements follow from Proposition~\ref{symmeas} and Theorem~\ref{meascomeas}.
\end{example}

The next groups to handle are those of order $4$, the first order for which we have two non-isomorphic groups: $C_4$ and $C_2\times C_2$. Although this example becomes already very large to compute completely by hand, let us make some interesting observations in this case. 
Recall that, given a field $k$ with $\Char(k)\neq 2$ and with a primitive fourth root of unity $i$, the group algebras of $C_4$ and $C_2\times C_2$ are isomorphic as Frobenius algebras. Denote $C_4:=\{1,x,x^2,x^3\}$ and $C_{2}:=\{e,g\}$ then one can check that the linear map $\phi: k[C_2\times C_2]\to k[C_4]$ given by 
\begin{eqnarray*}
(e,e)&\mapsto& 1,\\
(g,e)&\mapsto& \frac{1+i}{2}x+ \frac{1-i}{2}x^3,\\
(e,g)&\mapsto& \frac{1-i}{2}x+ \frac{1+i}{2}x^3,\\
(g,g)&\mapsto& x^2
\end{eqnarray*}
is a morphism of algebras preserving the Frobenius structure, hence is an isomorphism. Therefore, it follows that the universal measuring coalgebras $\mathcal C_{k[C_2\times C_2],k[C_4]}$ and $\mathcal C_{k[C_4],k[C_2\times C_2]}$ as well as the universal comeasuring algebras $\mathcal A_{k[C_2\times C_2],k[C_4]}$ and $\mathcal A_{k[C_4],k[C_2\times C_2]}$ are non-trivial, since they should contain at least the element associated to the above isomorphism. If now $k$ is a field of characteristic different from $2$, but not containing a primitive fourth root of unity (e.g. $k=\QQ$), then we know that $k[C_2\times C_2]$ and $k[C_4]$ are non-isomorphic, however also in this case we can conclude that the universal measuring coalgebras $\mathcal C_{k[C_2\times C_2],k[C_4]}$ and $\mathcal C_{k[C_4],k[C_2\times C_2]}$ as well as the universal comeasuring algebras $\mathcal A_{k[C_2\times C_2],k[C_4]}$ and $\mathcal A_{k[C_4],k[C_2\times C_2]}$ are still non-trivial. To see this, let us first state and prove the following useful observation, showing that the universal measuring coalgebra already ``detects'' (iso)morphisms that arise only after base extension.
\begin{proposition}\label{pr:descent}
Let $A$ and $B$ be two Frobenius algebras over a field $k$. Let $\ell$ be any extension of $k$. Consider the Frobenius algebras  over $\ell$ obtained by extension of scalars: $\ell\ot_k A$ and $\ell\ot_k B$.
Then the following assertions hold.
\begin{enumerate}[(i)]
\item $\Aa_{\ell\ot_kA,\ell\ot_kB}\cong \ell\ot_k\Aa_{A,B}$
\item The dimension of $\Aa_{A,B}$ is at least the cardinality of the set of morphisms of Frobenius algebras between $\ell\ot_kB$ and $\ell\ot_kA$.
\end{enumerate}
The same holds for arbitrary $\Omega$-algebras.
\end{proposition}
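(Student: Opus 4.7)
My plan is to reduce part (ii) to part (i) by a standard linear-independence argument, and to prove part (i) using the explicit construction of the universal comeasuring algebra recalled just before this proposition (and developed more generally in \cite{AGV}): $\Aa_{A,B}=T(B^*\ot A)/I$, where $T(B^*\ot A)$ is the tensor algebra over $B^*\ot A$ and $I$ is the two-sided ideal generated by the comeasuring relations arising from \equref{comeasalg}--\equref{comeas2} for a chosen basis.

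For part (i), I would check that each ingredient of this construction is compatible with base change. Since $B$ is finite dimensional (being Frobenius), the canonical map $\ell\ot_k B^*\to(\ell\ot_k B)^*$ is an isomorphism of $\ell$-vector spaces, so the generating space transforms correctly under $\ell\ot_k-$. The tensor algebra functor commutes with base extension because $\ell\ot_k T(V)$ and $T_\ell(\ell\ot_k V)$ satisfy the same universal property among $\ell$-algebras. Finally, the generators of $I$ are explicit expressions in the structure constants of $A$ and $B$, and these base-change to exactly the generators of the corresponding ideal $I'$ defining $\Aa_{\ell\ot_k A,\ell\ot_k B}$. Combining these three observations yields the desired isomorphism. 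Alternatively one can argue by universal property: any $\ell$-algebra $Q$ and comeasuring $\rho:\ell\ot_k B\to(\ell\ot_k A)\ot_\ell Q$ of Frobenius algebras over $\ell$ restrict, via $B\hookrightarrow\ell\ot_k B$ and the canonical identification $(\ell\ot_k A)\ot_\ell Q\cong A\ot_k Q$, to a comeasuring $B\to A\ot_k Q$ of Frobenius algebras over $k$; the universal property of $\Aa_{A,B}$ then produces a unique $k$-algebra morphism $\Aa_{A,B}\to Q$ which extends uniquely by tensor-hom adjunction to the required $\ell$-algebra morphism $\ell\ot_k\Aa_{A,B}\to Q$.

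For part (ii), let $N$ denote the cardinality of the set of Frobenius algebra morphisms $\ell\ot_k B\to\ell\ot_k A$. By the last statement of Proposition~\ref{prop:sHopcatOmega} together with Lemma~\leref{charactersvsmorphisms}(4), these morphisms are in bijection with the characters of $\Aa_{\ell\ot_k A,\ell\ot_k B}$, that is, with $\ell$-algebra homomorphisms into $\ell$. By Artin's theorem on linear independence of characters, distinct characters are linearly independent elements of the dual space, so $N\leq\dim_\ell\Aa_{\ell\ot_k A,\ell\ot_k B}$. Part (i) then gives $\dim_\ell\Aa_{\ell\ot_k A,\ell\ot_k B}=\dim_k\Aa_{A,B}$, which yields the claimed bound.

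The one technical point requiring care in part (i) is the equality $\ell\ot_k I=I'$, since a priori one only has an inclusion of the left-hand side into the ideal generated by its image. This is resolved by flatness of the field extension $\ell/k$, ensuring $\ell\ot_k-$ preserves two-sided ideals, combined with the fact that the generators of $I$ and $I'$ match under the base-change isomorphism. The extension to arbitrary $\Omega$-algebras is immediate, as the construction in \cite{AGV} has exactly the same shape with generators indexed by $\Omega$, and the argument above applies verbatim.
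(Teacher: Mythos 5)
Your proposal is correct. For part (i) you follow essentially the paper's own route: the universal comeasuring algebra is built by generators and relations from a basis and the structure constants, and both are preserved under extension of scalars; you are somewhat more careful than the paper in spelling out the identification $\ell\ot_k B^*\cong(\ell\ot_k B)^*$, the compatibility of the tensor algebra with base change, and the equality $\ell\ot_k I=I'$ (flatness plus matching generators), and you add a second, universal-property argument that the paper does not give. For part (ii) you take a mildly different path: the paper passes to the universal measuring coalgebra $\Cc_{\ell\ot_k A,\ell\ot_k B}=\Aa^{\circ}_{\ell\ot_k A,\ell\ot_k B}$, identifies Frobenius morphisms with grouplike elements via Lemma~\ref{le:meascomposition}, and uses linear independence of grouplikes, whereas you stay on the algebra side, identify morphisms with characters of $\Aa_{\ell\ot_k A,\ell\ot_k B}$ via Proposition~\ref{prop:sHopcatOmega} and \leref{charactersvsmorphisms}, and invoke Dedekind--Artin independence of characters. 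The two arguments are logically equivalent (characters of $\Aa$ are exactly the grouplikes of $\Aa^\circ$, and both independence statements are proved by the same computation); your version avoids invoking the Sweedler-dual comparison $\Cc_{BA}\cong\Aa_{BA}^\circ$ of Theorem~\ref{meascomeas}, while the paper's version ties the statement to the measuring-coalgebra picture that the surrounding discussion emphasizes. Both conclude identically via $\dim_\ell\Aa_{\ell\ot_k A,\ell\ot_k B}=\dim_k\Aa_{A,B}$ from part (i), and both extend verbatim to arbitrary $\Omega$-algebras.
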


\begin{proof}
\ul{(i)}.
First remark that if we fix a $k$-base $\{a_\alpha,\alpha\in I\}$ for $A$, then $\{1_\ell\ot a_\alpha, \alpha\in I\}$ is an $\ell$-base for $\ell\ot_k A$. Moreover the structure constants (belonging to $k$) for $A$ with respect to the base $\{a_\alpha,\alpha\in I\}$ are also the structure constants for $\ell\ot_k A$ with respect to the base $\{1_\ell\ot a_\alpha, \alpha\in I\}$\footnote{Hence the structure constants for $\ell\ot_k A$ with respect to the base $\{1_\ell\ot a_\alpha, \alpha\in I\}$ belong to $k$. In fact, by usual descent theory, an $\ell$-algebra is obtained by extension of scalars from a $k$-algebra exactly if there exists an $\ell$-base whose structure constants belong to $k$.}. 

Since the construction of the universal comeasuring algebra as described in the beginning of this section is purely given by generators and relations, where the former are constructed out of bases of the algebras and the latter depend on the structure constants for these bases. As explained above, bases and structure constants are preserved under base extension, hence the comeasuring algebra of base-extended algebras is the base extension of the comeasuring algebra of the initial algebras.

\ul{(ii)}. 
It is well-known that grouplike elements in a coalgebra are linearly independent and by Lemma~\ref{le:meascomposition} grouplike elements in the measuring coalgebra $\Cc_{AB}$ correspond to morphisms of Frobenius algebras from $B$ to $A$. Therefore the $\ell$-dimension of $\Cc_{\ell\ot_k A,\ell\ot_k B}$ is at least the cardinality of the set of morphisms of Frobenius algebras between $\ell\ot_kB$ and $\ell\ot_kA$. Furthermore, since $\Cc_{\ell\ot_k A,\ell\ot_k B}=\Aa^{\circ}_{\ell\ot_k A,\ell\ot_k B}$, also the $\ell$-dimension of $\Aa_{\ell\ot_k A,\ell\ot_k B}$ is at least this cardinality. Finally, by part (i), we find that the $\ell$-dimension of $\Aa_{\ell\ot_k A,\ell\ot_k B}$ equals the $k$-dimension of $\Aa_{AB}$.

For the last statement, it suffices to observe that the above reasoning not particular for Frobenius algebras, but also holds for arbitrary $\Omega$-algebras.
\end{proof}

Let us now prove a general triviality result for universal comeasuring algebras between group algebras.

\begin{proposition}\label{pr:groupgroup}
If $G$ and $H$ are finite groups and $k$ is a field such that $\Char k$ does not divide $|G|-|H|$, 
$$\mathcal{A}_{k[G],k[H]}=\mathcal{A}_{k[H],k[G]}=\mathcal{C}_{k[G],k[H]}=\mathcal{C}_{k[H],k[G]}=0.$$
\end{proposition}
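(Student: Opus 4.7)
The plan is to show that, under the assumption $\Char(k) \nmid |G|-|H|$, the identity element of $\Aa_{k[G],k[H]}$ is forced to vanish, so that the entire algebra collapses. By Proposition~\ref{symmeas} the same will then hold for $\Aa_{k[H],k[G]}$, and by the Sweedler duality of Theorem~\ref{meascomeas} the universal measuring coalgebras $\Cc_{k[G],k[H]}\cong\Aa_{k[G],k[H]}^{\circ}$ and $\Cc_{k[H],k[G]}$ will also vanish.

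The first step is to introduce, for each $y\in H$, the ``row sum''
\[
T_y \;:=\; \sum_{g \in G}(g^*\ot y) \;\in\; \Aa_{k[G],k[H]}.
\]
Combining relations \equref{groupalg3} and \equref{groupalg5} one obtains $T_{1_H}=(1_G^*\ot 1_H)=1$. Summing relation \equref{groupalg2} over $a\in G$ and reindexing via $a\mapsto ag^{-1}$ yields the multiplicativity $T_{xy}=T_xT_y$ for all $x,y\in H$; in particular $T_{x^{-1}}T_x = T_{1_H} = 1$ for every $x\in H$.

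The key computation is to take relation \equref{groupalg1} with $h=1_H$, i.e.\ $((ab)^*\ot 1_H) = \sum_{x\in H}(a^*\ot x^{-1})(b^*\ot x)$, and to sum both sides over all $(a,b)\in G\times G$. Since each $c\in G$ can be written as a product $ab$ in exactly $|G|$ ways, the left-hand side equals $|G|\cdot T_{1_H} = |G|\cdot 1_{\Aa}$, while the right-hand side factorises as
\[
\sum_{x\in H}\Bigl(\sum_{a\in G}(a^*\ot x^{-1})\Bigr)\Bigl(\sum_{b\in G}(b^*\ot x)\Bigr) \;=\; \sum_{x\in H}T_{x^{-1}}T_x \;=\; |H|\cdot 1_{\Aa}.
\]
Hence $(|G|-|H|)\cdot 1_{\Aa} = 0$. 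Under the standing hypothesis $\Char(k)\nmid |G|-|H|$, the integer $|G|-|H|$ is a nonzero, hence invertible, scalar in the field $k$, so $1_{\Aa} = 0$ and $\Aa_{k[G],k[H]}=0$.

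No serious obstacle is expected: the real content lies in noticing that the comeasuring axiom \equref{groupalg2} embeds the group $H$ as a multiplicative family of row sums $\{T_y\}_{y\in H}$ inside $\Aa_{k[G],k[H]}$, after which a double-counting of the same sum — once by exhausting $G\times G$ on the generator side and once by repackaging it through the row sums on the $H$ side — produces the numerical identity $|G|\cdot 1 = |H|\cdot 1$ in $\Aa$. The remaining three vanishings then follow instantly from Proposition~\ref{symmeas} and Theorem~\ref{meascomeas}.
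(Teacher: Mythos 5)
Your proof is correct and follows essentially the same route as the paper: both arguments combine relations \equref{groupalg1} and \equref{groupalg2} (together with \equref{groupalg3} and \equref{groupalg5}) to obtain the identity $|G|\cdot 1=|H|\cdot 1$ in $\Aa_{k[G],k[H]}$, and then conclude via Proposition~\ref{symmeas} and Theorem~\ref{meascomeas}. Your packaging through the multiplicative row sums $T_y=\sum_{g\in G}(g^*\ot y)$ and the double count over $G\times G$ is only an organizational variant of the paper's direct chain of equalities expanding $1=((g^{-1}g)^*\ot 1_H)$ and re-collapsing with \equref{groupalg2}.
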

\begin{proof}
As before, it suffices to prove that $\mathcal{A}_{k[G],k[H]}=0$. With notation as above we can compute
\begin{eqnarray*}
|G| 1 &=\sum_{g \in G}\left(1^*_{G} \otimes 1_{H}\right)=\sum_{g \in G}\left((g^*)^{-1} g^* \otimes 1_{H}\right)=\sum_{g \in G} \sum_{h \in H}\left((g^*)^{-1} \otimes h^{-1}\right)(g^* \otimes h) \\
&=\sum_{h \in H}\left(1^*_{G} \otimes h^{-1} h\right)=\sum_{h \in H}\left(1_{G}^* \otimes 1_{H}\right)=|H| 1
\end{eqnarray*}
If $\Char k$ does not divide $|G|-|H|$, then we can conclude that $0=1$ in $\mathcal{A}_{k[G],k[H]}=0$.
\end{proof}

We will now look into another family of Frobenius algebras, namely matrix algebras.

\begin{example}
Fix $n\in\mathbb N_0$ and consider the algebra $\mathcal{M}_{n}(k)$ of $n\times n$ matrix algebras with entries in $k$. We denote the canonical basis elements $\left\{E^n_{i j}\right\}_{i, j=1}^{n}$, which are zero everywhere except at the $(i, j)$-th position, where we have $1$. The Frobenius structure is given by 
\begin{eqnarray*}
\mu(E^n_{i j} \ot E^n_{k l})=\delta_{j k} E^n_{i l} & I_{n}=\sum_{i} E^n_{i i}\\
\Delta(E^n_{i j})=\sum_{k} E^n_{i k} \ot E^n_{k i} & \nu(E^n_{i j})=\delta_{i j},
\end{eqnarray*}
where $\delta_{ij}$ is the Kronecker-delta. 
Notice that just as group algebras, matrix algebras are special Frobenius since $\mu \circ \Delta=n \cdot \Id$ and therefore the two behave fairly similar. The universal comeasuring algebra is now given by $\mathcal{A}_{\Mm_m,\Mm_n}=T\left(\mathcal{M}_{m}^{*} \otimes \mathcal{M}_{n}\right) / I$ factoring out the relations
\begin{eqnarray} 
\eqlabel{matrix1}
\delta_{yz}E_{uv}^{m*} \ot E_{i\ell}^n &=& \sum_{p=1}^n (E_{uy}^{m*} \ot E^n_{ip})(E_{zv}^{m*}\ot E^n_{p\ell})  \\
\eqlabel{matrix2}
\delta_{jk}E_{uv}^{m*} \ot E_{i\ell}^n &=& \sum_{w=1}^m (E_{uw}^{m*} \ot E^n_{ij})(E_{wv}^{m*}\ot E^n_{k\ell})  \\
\eqlabel{matrix3} \delta_{i\ell} &=& \sum_{w=1}^m E_{ww}^{m*}\ot E_{i\ell} \\
\eqlabel{matrix4} \delta_{uv} &=& \sum_{p=1}^n E_{uv}^{m*}\ot E_{pp}^n  
\end{eqnarray}
for all $i,j,k,\ell=1,\ldots,n$ et $u,v,y,z=1,\ldots,m$.
\end{example}

Similarly to group algebras we find that the existence of non-zero measurings between matrix algebras puts severe restriction on their dimensions.
\begin{proposition}\label{pr:matrixmatrix}
Let $k$ be a field and $n,m\in\mathbb N_0$ such that $\Char k$ does not divide $n-m$. Then 
$$\mathcal{A}_{\mathcal{M}_{n}(k),\mathcal{M}_{m}(k)}=\mathcal{A}_{\mathcal{M}_{m}(k),\mathcal{M}_{n}(k)}=\mathcal{C}_{\mathcal{M}_{n}(k),\mathcal{M}_{m}(k)}=\mathcal{C}_{\mathcal{M}_{m}(k),\mathcal{M}_{n}(k)}=0.$$
\end{proposition}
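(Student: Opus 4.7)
The plan is to imitate the proof of Proposition~\ref{pr:groupgroup}: force a relation of the form $(n-m)\cdot 1 = 0$ inside the universal comeasuring algebra and conclude triviality from the hypothesis on the characteristic. Concretely, I will work in $\Aa_{\Mm_m(k),\Mm_n(k)}$, built from generators and relations as described just above the statement.

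The heart of the argument is to extract, from the presentation, two different expressions of $1$ as the same double sum of generators. From the unit relation \equref{matrix4}, setting $u=v$, one gets for each $u \in \{1,\dots,m\}$ the identity $1 = \sum_{p=1}^{n} E_{uu}^{m*} \otimes E_{pp}^{n}$; summing this over $u$ yields
\begin{equation*}
m\cdot 1 \;=\; \sum_{u=1}^{m}\sum_{p=1}^{n} E_{uu}^{m*}\otimes E_{pp}^{n}.
\end{equation*}
Dually, from the counit relation \equref{matrix3}, setting $i=\ell$, one gets for each $i\in\{1,\dots,n\}$ the identity $1 = \sum_{w=1}^{m} E_{ww}^{m*}\otimes E_{ii}^{n}$; summing this over $i$ yields
\begin{equation*}
n\cdot 1 \;=\; \sum_{i=1}^{n}\sum_{w=1}^{m} E_{ww}^{m*}\otimes E_{ii}^{n}.
\end{equation*}
The two right-hand sides are literally the same element of $\Aa_{\Mm_m,\Mm_n}$ (only the names of the summation indices differ), so $(n-m)\cdot 1 = 0$. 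Under the assumption $\Char k \nmid n-m$, the integer $n-m$ is invertible in $k$, so $1=0$ and hence $\Aa_{\Mm_m(k),\Mm_n(k)}=0$.

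The same computation with $m$ and $n$ interchanged gives $\Aa_{\Mm_n(k),\Mm_m(k)}=0$. The vanishing of the two universal measuring coalgebras is then automatic from Theorem~\ref{meascomeas}, which identifies $\Cc_{\Mm_m,\Mm_n}$ with the Sweedler dual $\Aa_{\Mm_m,\Mm_n}^{\circ}$ (and similarly for the other pair). I do not foresee any real obstacle: the argument reduces to the observation that the only two ``scalar'' relations in the presentation are precisely the traces of the identities in $\Mm_m(k)$ and $\Mm_n(k)$, and comparing them gives the desired numerical constraint; the multiplicative relations \equref{matrix1}--\equref{matrix2} play no role here, exactly as \equref{groupalg1}--\equref{groupalg2} were inessential in the group-algebra case.
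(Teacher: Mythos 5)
Your argument is correct and is essentially the paper's own proof: both compute the double sum $\sum_{w,i}(E^{m*}_{ww}\otimes E^n_{ii})$ in two ways, via the unit relation \equref{matrix4} and the counit relation \equref{matrix3}, to get $m\cdot 1=n\cdot 1$, hence $1=0$ when $\Char k\nmid n-m$, and the measuring coalgebras vanish by the Sweedler-dual identification of Theorem~\ref{meascomeas}.
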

\begin{proof}
 Denote the canonical basis by $E^m_{k l}$ and $E^n_{i j}$. Then
$$m1=\sum_{k}^{m} \delta_{k k}
=\sum_{k}^{m} \sum_{i}^{n} ((E_{k k}^{m})^*\ot E^n_{i i}) 
=\sum_{i}^{n} \delta_{i i}=n1$$
by using \equref{matrix3} and \equref{matrix4} and the definitions of the units in $\Mm_n$ and $\Mm_m$. Therefore $1=0$ in $\mathcal{A}_{\mathcal{M}_{m}(k),\mathcal{M}_{n}(k)}$ if $\Char k$ does not divide $n-m$.
\end{proof}

To finish this section we consider measurings between Frobenius group algebras and matrix algebras.

\begin{proposition} \label{pr:groupmatrix}
Let $k$ be a field, $G$ a finite group and $n\in\mathbb N_0$ such that $\Char k$ does not divide $n-|G|$ or $n-1$. Then 
$$\mathcal{A}_{\mathcal{M}_{n}(k),k[G]}=\mathcal{A}_{k[G],\mathcal{M}_{n}(k)}=\mathcal{C}_{\mathcal{M}_{n}(k),k[G]}=\mathcal{C}_{k[G],\mathcal{M}_{n}(k)}=0.$$
\end{proposition}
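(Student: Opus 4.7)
The plan is to follow the strategy of \prref{groupgroup} and \prref{matrixmatrix}, namely to show that $1=0$ in the universal comeasuring algebras $\Aa_{k[G],\Mm_{n}(k)}$ and $\Aa_{\Mm_{n}(k),k[G]}$; the corresponding measuring coalgebras then automatically vanish by \thref{meascomeas}(3), since the Sweedler dual of the zero ring is zero. The key observation is that each Frobenius algebra $F\in\{k[G],\Mm_{n}(k)\}$ carries two natural scalar invariants: the element $c_{F}\in k$ such that $e^{1}_{F}e^{2}_{F}=c_{F}\cdot 1_{F}$ (where $e^{1}_{F}\ot e^{2}_{F}$ is the Casimir element), and the scalar $\nu_{F}(1_{F})\in k$. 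A direct computation with the Casimir elements gives $c_{k[G]}=|G|$ and $\nu_{k[G]}(1_{k[G]})=1$, while $c_{\Mm_{n}(k)}=\nu_{\Mm_{n}(k)}(1_{\Mm_{n}(k)})=n$.

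For any comeasuring $\rho:A\to B\ot Q$ of Frobenius algebras, applying the coalgebra axiom \equref{comeas1} at $a=1_{A}$ (and using $\rho(1_{A})=1_{B}\ot 1_{Q}$ from \equref{comeasalg}) yields the identity $e^{1[0]}_{A}\ot e^{2[0]}_{A}\ot e^{1[1]}_{A}e^{2[1]}_{A}=e^{1}_{B}\ot e^{2}_{B}\ot 1_{Q}$ in $B\ot B\ot Q$. Multiplying in the first two tensor factors and using multiplicativity of $\rho$ (i.e.\ \equref{comeasalg}) turns this into $\rho(e^{1}_{A}e^{2}_{A})=e^{1}_{B}e^{2}_{B}\ot 1_{Q}$, which, whenever both Casimir products are scalar multiples of the respective units, collapses to $(c_{A}-c_{B})\cdot 1_{Q}=0$. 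Independently, the counit axiom \equref{comeas2} applied at $a=1_{A}$ directly yields $(\nu_{A}(1_{A})-\nu_{B}(1_{B}))\cdot 1_{Q}=0$.

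Specializing $\{A,B\}=\{k[G],\Mm_{n}(k)\}$ in either order, these two identities become $(n-|G|)\cdot 1_{Q}=0$ and $(n-1)\cdot 1_{Q}=0$ in $Q$. Under the hypothesis that $\Char k$ does not divide at least one of $n-|G|$ and $n-1$, the corresponding scalar is invertible in $k$, forcing $1_{Q}=0$ and hence $Q=0$. There is no serious obstacle here: the argument is a straightforward adaptation of the proofs of \prref{groupgroup} and \prref{matrixmatrix}. The only new feature is that one must combine \emph{two} scalar invariants to separate $k[G]$ from $\Mm_{n}(k)$, since the Casimir scalars $c_{F}$ coincide exactly when $|G|=n$ while the counit scalars $\nu_{F}(1_{F})$ coincide exactly when $n=1$, so neither invariant alone is sufficient.
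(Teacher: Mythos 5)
Your proof is correct, and it arrives at exactly the same two congruences as the paper, namely $(n-1)\cdot 1=0$ and $(n-|G|)\cdot 1=0$ in the comeasuring algebra, but by a genuinely more invariant route. The paper works with the explicit presentation $\Aa_{k[G],\Mm_n(k)}=T(k[G]^*\ot \Mm_n(k))/I$ and manipulates the generators using the relations \equref{matrixgroup1}--\equref{matrixgroup4} to obtain $1=n\cdot 1$ and $|G|\cdot 1=n\cdot 1$; you instead evaluate the abstract comeasuring axioms at the unit: \equref{comeasfrob2} composed with $\mu_B\ot Q$ and multiplicativity gives $\rho(e^1_Ae^2_A)=f^1f^2\ot 1_Q$, hence $(c_A-c_B)1_Q=0$, and \equref{comeas2} at $1_A$ gives $(\nu_A(1_A)-\nu_B(1_B))1_Q=0$, valid in \emph{any} comeasuring algebra $Q$, and in particular in the universal one. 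These are precisely the invariant forms of the paper's two basis computations, so the mathematical content is the same, but your formulation buys more: it is symmetric in $A$ and $B$ (so both directions $\Aa_{k[G],\Mm_n(k)}$ and $\Aa_{\Mm_n(k),k[G]}$ are covered at once, which the paper leaves implicit), and it shows triviality of (co)measurings between any two \emph{special} Frobenius algebras whose Casimir scalars or counit-of-unit scalars differ in $k$ --- exactly the generalization the paper raises as an open point in its conclusions. Two small remarks: the collapse from $(c_A-c_B)(1_B\ot 1_Q)=0$ to $(c_A-c_B)1_Q=0$ silently uses $1_B\neq 0$ and that $k$ is a field, and for the vanishing of the measuring coalgebras only part (1) of Theorem~\ref{meascomeas} (the isomorphism $\Cc_{BA}\cong \Aa_{BA}^{\circ}$) is needed, rather than part (3); neither affects correctness.
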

\begin{proof} 
The universal comeasuring algebra is now given by $\mathcal{A}_{k[G],\Mm_n(k)}=T\left(k[G]^{*} \otimes \mathcal{M}_{n}\right) / I$ factoring out the relations
\begin{eqnarray} 
\eqlabel{matrixgroup1}
((ab)^*\ot E^n_{ij})&=&\sum_{k=1}^n (a^* \ot E^n_{ik})(b^* \ot E^n_{kj})  \\ 
\eqlabel{matrixgroup2}
\delta_{k\ell}(a^*\ot E^n_{ij})&=&\sum_{g\in G} ((a g^{-1})^* \ot E^n_{ik})(g^*\ot E^n_{\ell j})\\  
\eqlabel{matrixgroup3}
\delta_{ij}&=&1_{G}^{*} \otimes E_{ij} \\  
\eqlabel{matrixgroup4}
1&=&\sum_{k=1}^n(1_{G}^* \otimes E_{kk}^n)	 
\end{eqnarray}
for all $ a,b\in G$ and $i,j,k,\ell=1,\ldots,n$.

Using relations \equref{matrixgroup4} and \equref{matrixgroup3} we find:
$$1=\sum_{k=1}^{n} (e_{G}^{*} \otimes E_{kk})=\sum_{k=1}^{n} \delta_{kk}=n1.$$
So $1=0$ whenever $\Char k$ does not divide $n-1$. 

On the other hand, using \equref{matrixgroup1}, \equref{matrixgroup2} and \equref{matrixgroup4} we find
\begin{eqnarray*}
|G|1&=&\sum_{g \in G} 1 =\sum_{g \in G}\sum_{i=1}^n ((gg^{-1})^* \ot E^{n}_{ii})\\
&=&\sum_{g \in G}\sum_{i=1}^n \sum_{j=1}^n (g^* \otimes E^{n}_{ij})((g^*)^{-1} \ot E^n_{ji})  \\
&=& \sum_{i=1}^n \sum_{j=1}^n (1^*_G \otimes E^{n}_{ii}) =\sum_{j=1}^n 1=n1
\end{eqnarray*}
hence $1=0$ whenever $\Char k$ does not divide $n-|G|$.
\end{proof}

\section{Conclusions and outlook}

\subsection{Remaining questions}

In this paper, we studied measuring coalgebras and comeasuring algebras between Frobenius algebras. As a main result we showed that the semi-Hopf category (i.e.\ the coalgebra enriched category) of universal measuring coalgebras between Frobenius algebras admits an invertible antipode. We discussed duality between measuring and comeasurings and computed several explicit examples. Based on these results and examples, we formulate a few remaining questions that require further investigations. 

\begin{enumerate}
\item Recall that if $\ell$ is a field extension of $k$, then a $k$-algebra $A$ is called a {\em form} of an $\ell$-algebra $R$, if and only if $R\cong \ell\ot_k A$. So far, the only examples we have where $\Cc_{A,B}$ is non-zero for some Frobenius $k$-algebras $A$ and $B$, is when $A$ and $B$ are forms of the same $\ell$-algebra for some field extension $\ell$ of $k$. Therefore, and in view of Proposition~\ref{pr:descent} above, we pose the following question:
\begin{quote}
Is it true that $\Cc_{A,B}$ is nonzero for some some Frobenius $k$-algebras $A$ and $B$ if and only if $A$ and $B$ are forms of the same $\ell$-algebra for some field extension $\ell$ of $k$ ?
\end{quote}
For the above to hold, it would be sufficient to see that there exists a base extension $\ell$ of $k$, such that $\ell\ot_k\Cc_{A,B}$ has a grouplike element, since such a grouplike element acts as a morphism of Frobenius algebras, which is therefore an isomorphism.
The results we have proven so far (see Propositions~\ref{pr:groupgroup}, \ref{pr:groupmatrix} and \ref{pr:matrixmatrix}), already show that there are strong dimension restrictions for group algebras and matrix algebras in order to have existence of non-zero (co)measurings between them. Since both group algebras and matrix algebras are so-called {\em special} Frobenius, meaning that the composition of multiplication with comultiplicaton equals a scalar multiple of the identity, one could wonder whether it is possible to generalize the triviality conditions to this setting. 
\item In all examples we computed, the universal measuring coalgebras and comeasuring algebras were finite dimensional. However, there is a priori no reason why this should always be the case. We therefore ask
\begin{quote}
Do there exist Frobenius algebras $A$ and $B$, whose universal measuring coalgebra $\Cc_{AB}$ is infinite dimensional ?
\end{quote}
Furthermore, one could also wonder, if there is a formula to compute the dimension of the universal measuring coalgebra. Based on the very small examples we have, one already sees that the dimension of the universal acting Hopf algebra on a Frobenius algebra grows more rapidly than the dimension of the Frobenius algebra itself (the dimension of $\Cc_{k[C_2],k[C_2]}$ being $2$ and the dimension of $\Cc_{k[C_3],k[C_3]}$ being $6$, computer algebra computations showed that the dimension of $\Cc_{\CC[C_4],\CC[C_4]}$ is at least 96).
\item One could wonder, if there exist other types of $\Omega$-algebras such that the associated measuring semi-Hopf category is Hopf ? Or even stronger: what are the conditions on $\Omega$ and its $\Omega$-algebras for this to be the case ? Based on \seref{duality2} we expect that this could be the case for $\Omega$'s that are ``self-dual''. 
\end{enumerate}

\subsection{Motivation from topological quantum field theory}

Finally, let us mention some possible application of our work to Topological Quantum Field Theory which was in fact the initial motivation for our investigations. Recall that a Topological Quantum Field Theory is nothing else than a symmetric monoidal functor $Z$ from a suitably defined category of $n$-cobordisms to the category of vector spaces. One way to define the needed category of cobordisms is as follows (many variations are possible). Objects in such a category are $n-1$-dimensional oriented manifolds (with the possibility to restrict for example to closed ones or open ones). A morphism between two such objects $M$ and $N$, is a diffeormorphism class $\Sigma$ of $n$-dimensional oriented manifolds, together with diffeomorphisms of oriented manifolds between $M$ and the in-boundary of $\Sigma$, and between $N$ and the out-boundary of $\Sigma$. Composition is obtained from gluing, tensor product is just disjoint union.

In the one-dimensional case, where the cobordisms consist of {\em strings} (curved lines) between (ordered, finite) families of (oriented) points, a TQFT is determined by a finite dimensional vector space $V$. Indeed, possible cobordisms, generating the category are pictured below. The functor $Z$ is then completely determined by the image of the positive point, which is the vector space $V$, and sends the negative point to its dual vector space. The cup and cap strings are mapped to evaluation and coevaluation morphisms. 
\[
\xy
(-10,0)*{\bullet}="Vd",*+!D{V^*},*+!L{-}; (10,0)*{\bullet}="V",*+!D{V},*+!L{+};
(0,0)*\cir<1cm>{d^u};  (0,-10)*+!U{\ev};
(-10,-10);(1,-10) **\dir{} ?>* \dir{>};
(30,-10)*{\bullet}="V",*+!U{V},*+!L{+}; (50,-10)*{\bullet}="Vd",*+!U{V^*},*+!L{-};
(40,-10)*\cir<1cm>{u^d};  (40,0)*+!D{{\sf coev}};
(30,0);(41,0) **\dir{} ?>* \dir{>};
(70,0)*{\bullet}="V1",*+!D{V},*+!L{+}; (70,-10)*{\bullet}="V2",*+!U{V},*+!L{+}; (70,0) ; (70,-10) **\dir{-} ;  (70,-5),*+!L{{\sf Id}};
(70,-10);(70,-4) **\dir{} ?>* \dir{>};
(90,0)*{\bullet}="V1",*+!D{V^*},*+!L{-}; (90,-10)*{\bullet}="V2",*+!U{V^*},*+!L{-}; (90,0) ; (90,-10) **\dir{-} ;  (90,-5),*+!L{{\sf Id}};
(90,0);(90,-6) **\dir{} ?>* \dir{>};
\endxy
\]
The ``dual base property'' then is a consequence of the following identities in the category of cobordisms.
\[
\xy
(0,0)*{\bullet},*+!D{V^*},*+!L{-};
(0,0) ; (0,-20) **\dir{-};
(0,0);(0,-16) **\dir{} ?>* \dir{>};
(10,-20)*\cir<1cm>{d^u};  (10,-30)*+!U{\ev};
(20,-20)*{\bullet},*+!DR{V},*+!L{+};
(30,-20)*\cir<1cm>{u^d};  (30,-10)*+!D{{\sf coev}};
(40,-20) ; (40,-40) **\dir{-};
(40,0);(40,-24) **\dir{} ?>* \dir{>};
(40,-40)*{\bullet},*+!U{V^*},*+!L{-};
(50,-20)*+!L{=};
(65,0)*{\bullet},*+!D{V^*},*+!L{-}; (65,-40)*{\bullet},*+!U{V^*},*+!L{-};
(65,0) ; (65,-40) **\dir{-};
(65,0);(65,-21) **\dir{} ?>* \dir{>};
\endxy \qquad\qquad
\xy 
(40,0)*{\bullet},*+!D{V},*+!L{+};
(40,0) ; (40,-20) **\dir{-};
(40,-40);(40,-16) **\dir{} ?>* \dir{>};
(30,-20)*\cir<1cm>{d^u};  (30,-30)*+!U{\ev};
(20,-20)*{\bullet},*+!DR{V^*},*+!L{-};
(10,-20)*\cir<1cm>{u^d};  (10,-10)*+!D{{\sf coev}};
(0,-20) ; (0,-40) **\dir{-};
(0,-40);(0,-24) **\dir{} ?>* \dir{>};
(0,-40)*{\bullet},*+!U{V},*+!L{+};
(50,-20)*+!L{=};
(65,0)*{\bullet},*+!D{V},*+!L{+}; (65,-40)*{\bullet},*+!U{V},*+!L{+};
(65,0) ; (65,-40) **\dir{-};
(65,-40);(65,-19) **\dir{} ?>* \dir{>};
\endxy
\]

In the two-dimensional {\em open} case, cobordisms are strips between (ordered, finite) families of (oriented) line segments and a TQFT in this case is completely characterized by a symmetric Frobenius algebra (recall that {\em symmetric} means that the multiplication composed with counit is invariant under twisting of the arguments), see \cite[Corollary 4.5]{LauPfe}. The possible cobordisms generating the category are again pictured below. 
\[
\xy
(0,0);(7.5,-12.5) **\crv{(1,-6)&(7.5,-6.5)};
(10,0)*\cir<0.5cm>{d^u};  
(20,0) ; (12.5,-12.5) **\crv{(19,-6)&(12.5,-6.5)};
(0,0);(5,0) **\dir{-} ?>* \dir{>};
(15,0);(20,0) **\dir{-} ?>* \dir{>};
(7.5,-12.5);(12.5,-12.5) **\dir{-} ?>* \dir{>};
\endxy \qquad
\xy
(0,-12.5);(7.5,0) **\crv{(1,-7.5)&(7.5,-6)};
(10,-12.5)*\cir<0.5cm>{u^d};  
(20,-12.5) ; (12.5,0) **\crv{(19,-6.5)&(12.5,-6)};
(0,-12.5);(5,-12.5) **\dir{-} ?>* \dir{>};
(15,-12.5);(20,-12.5) **\dir{-} ?>* \dir{>};
(7.5,0);(12.5,0) **\dir{-} ?>* \dir{>};
\endxy \qquad
\xy
(0,0); (5,0) **\dir{-} ?>* \dir{>};
(2.5,0)*\cir<0.25cm>{d^u}; 
\endxy \qquad
\xy
(0,-12.5); (5,-12.5) **\dir{-} ?>* \dir{>};
(2.5,-12.5)*\cir<0.25cm>{u^d}; 
\endxy \qquad
\xy
(0,-12.5); (5,-12.5) **\dir{-} ?>* \dir{>};
(0,0); (5,0) **\dir{-} ?>* \dir{>};
(0,-12.5); (0,0) **\dir{-};
(5,0);(5,-12.5) **\dir{-};
\endxy\qquad
\xy
(0,0); (5,0) **\dir{-} ?>* \dir{>};
(15,0); (20,0) **\dir{-} ?>* \dir{>};
(0,-12.5); (5,-12.5) **\dir{-} ?>* \dir{>};
(15,-12.5) ; (20,-12.5) **\dir{-} ?>* \dir{>};
(15,-12.5); (0,0) **\dir{-};
(5,0);(20,-12.5) **\dir{-};
(15,0); (0,-12.5) **\dir{-};
(5,-12.5);(20,0) **\dir{-};
\endxy
\]
The following identity in the category of cobordisms visualizes the Frobenius property, the functor $Z$ now sends the oriented line segment to a Frobenius algebra $A$, and it sends the line segement with reversed orientation to the dual Frobenius algebra $A^*$.
\[
\xy
(0,-12.5);(7.5,-25) **\crv{(1,-18.5)&(7.5,-19)};
(10,-12.5)*\cir<0.5cm>{d^u};  
(20,-12.5) ; (12.5,-25) **\crv{(19,-18.5)&(12.5,-19)};
(30,-25);(35,-25) **\dir{-} ?>* \dir{>};
(15,-12.5);(22.5,0) **\crv{(16,-7.5)&(22.5,-6)};
(25,-12.5)*\cir<0.5cm>{u^d};  
(35,-12.5) ; (27.5,0) **\crv{(34,-6.5)&(27.5,-6)};
(7.5,-25);(12.5,-25) **\dir{-} ?>* \dir{>};
(22.5,0);(27.5,0) **\dir{-} ?>* \dir{>};
(0,-12.5); (0,0) **\dir{-} ;
(5,0);(5,-12.5) **\dir{-} ;
(0,0);(5,0) **\dir{-} ?>* \dir{>};
(30,-12.5); (30,-25) **\dir{-};
(35,-25);(35,-12.5) **\dir{-};
(40,-15),*+!L{=};
\endxy\quad
\xy
(0,0);(7.5,-12.5) **\crv{(1,-6)&(7.5,-6.5)};
(10,0)*\cir<0.5cm>{d^u};  
(20,0) ; (12.5,-12.5) **\crv{(19,-6)&(12.5,-6.5)};
(0,0);(5,0) **\dir{-} ?>* \dir{>};
(15,0);(20,0) **\dir{-} ?>* \dir{>};
(0,-25);(7.5,-12.5) **\crv{(1,-20)&(7.5,-18.5)};
(10,-25)*\cir<0.5cm>{u^d};  
(20,-25) ; (12.5,-12.5) **\crv{(19,-19)&(12.5,-18.5)};
(0,-25);(5,-25) **\dir{-} ?>* \dir{>};
(15,-25);(20,-25) **\dir{-} ?>* \dir{>};
\endxy\quad
\xy
(-10,-15),*+!L{=};
(15,-12.5);(22.5,-25) **\crv{(16,-18.5)&(22.5,-19)};
(25,-12.5)*\cir<0.5cm>{d^u};  
(35,-12.5) ; (27.5,-25) **\crv{(34,-18.5)&(27.5,-19)};
(30,0);(35,0) **\dir{-} ?>* \dir{>};
(22.5,-25);(27.5,-25) **\dir{-} ?>* \dir{>} ;
(0,-12.5);(7.5,0) **\crv{(1,-7.5)&(7.5,-6)};
(10,-12.5)*\cir<0.5cm>{u^d};  
(20,-12.5) ; (12.5,0) **\crv{(19,-6.5)&(12.5,-6)};
(0,-25);(5,-25) **\dir{-} ?>* \dir{>};
(7.5,0);(12.5,0) **\dir{-} ?>* \dir{>};
(0,-12.5); (0,-25) **\dir{-};
(5,-25);(5,-12.5) **\dir{-};
(30,-12.5); (30,0) **\dir{-};
(35,0);(35,-12.5) **\dir{-};
\endxy
\]

Now, it is well-known that the endomorphisms of a finite dimensional vector space form a symmetric Frobenius algebra (isomorphic to a matrix algebra). Hence, it is possible to construct a $2$-dimensional (open) TQFT out of a $1$-dimensional one by moving from the object characterizing the $1$-dimensional TQFT (i.e. the finite dimensional vector space) to its endomorphism object (i.e. the matrix algebra). Geometrically, this can be interpreted as follows. If we consider a $2$-cobordism, that is, a strip between dotted line segments as in the example below, and we only look to the solid boundaries of this strip, those can be viewed as $1$-cobordisms between the endpoints of the line segments, to which we can apply a $1$-dimensional TQFT. In the example below, we see then indeed that the multiplication of the algebra associated to the $2$-dimensional TQFT constructed this way, arises by tensoring the evaluation map on both sides with identities, which is exactly the multiplication of the endomorphism algebra $\End(V)\cong V\ot V^*$ of the vector space $V$ associated to the $1$-dimenional TQFT. 
\[
\xy
(0,0);(15,-25) **[red]\crv{(2,-12)&(15,-13)};
(30,-15)*[red]+!L{{\sf Id}};
(20,0)*[red]\cir<1cm>{d^u};  
(20,-10)*[red]+!U{\ev};
(40,0) ; (25,-25) **[red]\crv{(38,-12)&(25,-13)};
(10,-15)*[red]+!R{{\sf Id}};
(0,0);(10,0) **\dir{.} ?>* \dir{>};
(30,0);(40,0) **\dir{.} ?>* \dir{>};
(15,-25);(25,-25) **\dir{.} ?>* \dir{>};
(0,0)*{\cdot},*+!D{V},*+!L{};
(10,0)*{\cdot},*+!D{V^*},*+!L{};
(30,0)*{\cdot},*+!D{V},*+!L{};
(40,0)*{\cdot},*+!D{V^*},*+!L{};
(15,-25)*{\cdot},*+!U{V},*+!L{};
(25,-25)*{\cdot},*+!U{V^*},*+!L{};
\endxy 
\]

The question which triggered us to start this work was if it would be possible to make a similar construction as above one dimension higher. That is, whether the ``endomorphisms of a $2$-dimensional TQFT'' could give rise to a $3$-dimensional one. Let us try to make this a bit more precise. When we say ``endomorphisms of a $2$-dimensional TQFT'', we in fact mean the ``endomorphism object'' of the algebraic structure associated to the $2$-dimensional TQFT, which is the (symmetric) Frobenius algebra. If we interpret the previously mentioned ``endomorphism object'' as the universal measuring coalgebra (which is a bialgebra by Proposition~\ref{prop:sHcatOmega}) from the considered Frobenius algebra to itself, then the results of our paper (see Theorem~\ref{univact}) show that this is a Hopf algebra. Furthermore it is known (see \cite{DSS}) that Hopf algebras, or more precisely their representation categories, give indeed rise to $3$-dimensional TQFTs. So far however, we did not obtain a (satisfying) geometric interpretation of this observation, but we hope that future investigations will lead to such.

\end{document}